\newcounter{stepnb}
\newtheorem{theorem}{Theorem}[section]
\newtheorem{lemma}[theorem]{Lemma}
\newtheorem{counterexample}[theorem]{Counterexample}
\newtheorem{proposition}[theorem]{Proposition}
\theoremstyle{definition}
\newtheorem{remark}[theorem]{Remark}
\theoremstyle{definition}
\newtheorem{question}{Question}
\numberwithin{equation}{section}
\newcommand{\dd}{\mathsf d}
\newcommand{\R}{\mathbb{R}}
\newcommand{\ee}{\varepsilon}
\newcommand{\eps}{\varepsilon}
\renewcommand{\div}{{\rm div}\,}
\newcommand{\weaks}{\stackrel{*}{\rightharpoonup}}
\newcommand{\loc}{\mathrm{loc}}
\begin{document}

\title[Local limit of nonlocal conservation laws]{On the singular local limit for conservation laws\\with nonlocal fluxes}

\author[M.~Colombo]{Maria Colombo}
\address{M.C. EPFL, Station 8, 
CH-1015 Lausanne, Switzerland.}
\email{maria.colombo@epfl.ch}
\author[G.~Crippa]{Gianluca Crippa}
\address{G.C. Departement Mathematik und Informatik,
Universit\"at Basel, Spiegelgasse 1, CH-4051 Basel, Switzerland.}
\email{gianluca.crippa@unibas.ch}
\author[L.~V.~Spinolo]{Laura V.~Spinolo}
\address{L.V.S. IMATI-CNR, via Ferrata 5, I-27100 Pavia, Italy.}
\email{spinolo@imati.cnr.it}
\maketitle
{
\rightskip .85 cm
\leftskip .85 cm
\parindent 0 pt
\begin{footnotesize}

{\sc Abstract.}
We give an answer to a question posed in~\cite{ACT}, which can be loosely speaking formulated as follows. Consider a family of continuity equations where the velocity depends on the solution via the convolution by a regular kernel. In the singular limit where the convolution kernel is replaced by a Dirac delta, one formally recovers a conservation law: can we rigorously justify this formal limit? 
We exhibit counterexamples showing that, despite numerical evidence suggesting a positive answer, one in general does not have convergence of the solutions. We also show that the answer is positive if we consider viscous perturbations of  the nonlocal equations. In this case, in the singular local limit the solutions converge to the solution of the viscous conservation law. 

\medskip\noindent
{\sc Keywords:} nonlocal conservation law, nonlocal continuity equation, singular limit, local limit. 

\medskip\noindent
{\sc MSC (2010):} 35L65.

\end{footnotesize}

}

\vspace{.3 cm}

\section{Introduction and main results}
We are concerned with the so-called nonlocal continuity equation (or nonlocal conservation law)
\begin{equation}
\label{e:intro}
    \partial_t w + \div [ w \ b (w \ast \eta)] =0. 
\end{equation}
In the previous expression, $b: \R \to \R^d$ is a Lipschitz continuous vector-valued function, the scalar function $w: \R^+\times\R^d\to\R$ is the unknown and $\div$ denotes the divergence computed with respect to the space variable only. The symbol $\ast$ denotes the convolution computed with respect to the space variable only and $\eta$ is a convolution kernel  satisfying 
\begin{equation}
\label{e:eta}
      \eta: \R^d \to \R, \qquad \eta \in C_c^\infty (\R^d), \quad \eta (x) = 0 \; \text{if $|x| \ge 1$}  , \quad \eta \ge 0, \quad \int_{\R^d} \eta (x) dx =1.
\end{equation} 
In recent years, conservation laws involving nonlocal terms have been extensively studied owing to their applications to models for sedimentation~\cite{Sedimentation}, pedestrian~\cite{ColomboGaravelloMercier} and vehicular~\cite{BlandinGoatin} traffic, and others. 
We refer to the recent paper~\cite{BlandinGoatin} for a more extended discussion and a more complete list of references. Here we only mention that the basic idea underpinning    
the use of equations like~\eqref{e:intro} in traffic models is, very loosely speaking, the following. The unknown $w$ represents the density of pedestrians or cars and $b$ their velocity. The nonlocal term $w \ast \eta$ appears since one postulates that pedestrians or drivers tune their velocity depending on the density of pedestrians or cars surrounding them. 

In the present work we investigate a question posed by Amorim, R. Colombo and Teixeira in~\cite{ACT}. To precisely state the question, we consider the family of Cauchy problems  
\begin{equation}
\label{e:nlcpr}
\left\{
\begin{array}{ll}
  \partial_t u_{\ee } + \div \big[ u_{\ee } b(u_{\ee } \ast \eta_\ee) \big] = 0 \\
  u_{\ee } (0, x) = \bar u(x), \\
\end{array}
\right.
\end{equation}
where $b$ is as before a Lipschitz continuous vector-valued function, $\ee$ is a positive parameter and $\bar u$ is a summable and bounded initial datum. Assume that the family of convolution kernels $\eta_\ee$ is obtained from $\eta$ by setting 
\begin{equation}
\label{e:etaee}
    \eta_\ee (x) : = \frac{1}{\ee^d} \eta \left( \frac{x}{\ee} \right), \qquad 0 < \varepsilon \leq 1,
\end{equation}
in such a way that when $\ee \to 0^+$ the family $\eta_\ee$ converges weakly-$^\ast$ in the sense of measures to the Dirac delta. This implies that, when $\ee \to 0^+$, the Cauchy problem~\eqref{e:nlcpr} \emph{formally} reduces to a scalar conservation law
\begin{equation}
\label{e:clcpr}
\left\{
\begin{array}{ll}
  \partial_t u + \div \big[ u b(u) \big] = 0 \\
  u (0, x) = \bar u(x). \\
\end{array}
\right.
\end{equation}
The by now classical theory by Kru{\v{z}}kov~\cite{Kruzkov} provides global existence and uniqueness results for so-called \emph{entropy admissible} solutions. We refer to~\cite{Dafermos:book}
for the definition and an extended discussion concerning entropy solution of conservation laws. 
The question posed in~\cite{ACT} can be formulated as follows. 
 \begin{question}
 \label{?} Can we rigorously justify the singular 
 limit from~\eqref{e:nlcpr} to~\eqref{e:clcpr}? In other words, does $u_\ee$ converge to the entropy admissible $u$, in a suitable topology? 
  \end{question}
  Some remarks are here in order. First, Question~\ref{?} is motivated by numerical experiments. Indeed, in~\cite[\S~3.3]{ACT} the authors exhibit numerical evidence suggesting that there should be convergence. Second, to the best of our knowledge, the only previous analytical result concerning Question~\ref{?} is due to Zumbrun~\cite{Zumbrun} and states that the answer to Question~\ref{?} is positive provided that the limit entropy solution $u$ is smooth  and the convolution kernel is even, i.e. $\eta (x) = \eta (-x)$ (see~\cite[Proposition 4.1]{Zumbrun} for a more precise statement). Third, even in the case $d=1$, $b(u_\ee) =u_\ee$, establishing  weak compactness of the family $\{ u_\ee \}$ is not a priori sufficient to establish convergence. Indeed, one needs strong convergence (or some more refined argument) to pass to the limit in the nonlinear term $u_\ee \ u_\ee \ast \eta_\ee.$  Fourth, similar questions show up when considering equations in transport form instead of in continuity form as in~\eqref{e:nlcpr} and~\eqref{e:clcpr} (see for instance~\cite{Fetecau}); the analysis in such a case shares some similarities with that in the present paper and we plan to address it in future work. 

In this paper we exhibit explicit counterexamples 
showing that the answer to Question~\ref{?} is, in general, negative. Also, we show that the answer is positive if we add to the right hand side of the first line of both~\eqref{e:nlcpr} and~\eqref{e:clcpr} a viscous term. 
As we explain below, this is relevant in connection with the numerical analysis of the singular limit 
from~\eqref{e:nlcpr} to~\eqref{e:clcpr}. 

We now describe our results more precisely. Our counterexamples can be summarized as follows:
\begin{itemize}
\item In~\S~\ref{ss:ce1} we exhibit a counterexample showing that, in general, $u_\ee$ does not converge to the entropy admissible solution $u$ weakly in $L^p$ or weakly$^\ast$ in $L^\infty$.  The example uses a family of even convolution kernels and is described in Counterexample~\ref{l:ce1}. A drawback is that the initial datum $\bar u$ changes sign. This is not completely satisfactory in view of the applications, where the unknown typically represents a density. 
\item In~\S~\ref{ss:ce2} we exhibit a counterexample with a nonnegative initial datum where we show that $u_\ee$ does not converge to $u$ weakly in $L^p$ or weakly$^\ast$ in $L^\infty$. See Counterexample~\ref{l:ce2} for the precise statement. A drawback of this example is that we have to use ``completely asymmetric" convolution kernels, namely we assume that $\eta(x) =0$ for every $x>0$.  Note that this is consistent with numerical experiments provided in~\cite[\S~3.2]{ACT} and~\cite[\S~5]{BlandinGoatin}, where ``completely asymmetric" kernels are connected with highly oscillatory behaviors of the solution. 
\item In \S~\ref{ss:ce3} we exhibit a counterexample involving a nonnegative initial datum and a family of even convolution kernels.  In this counterexample we show that for every $\delta>0$ the family $u_\ee$ does not converge to $u$ strongly in $L^{1+ \delta}$. See Counterexample~\ref{l:ce3} for the precise statement. 
\end{itemize} 
To find a contradiction to the convergence, in the three examples we construct a family of solutions $u_\ee$ with some qualitative property which is stable under convergence, but not satisfied by the entropic solution in the limit. 
These qualitative properties differ in each case and are, roughly speaking, related to the total mass of the solution in a suitable region (see Counterexample~\ref{l:ce1}), the support (see Counterexample~\ref{l:ce2}), and the quantity $\int u \log u \, dx$ (which, under suitable assumptions, is conserved by the nonlocal approximation and strictly dissipated in the limit, see Counterexample~\ref{l:ce3}).
A more precise description of the idea behind each counterexample can be found after each statement in Section~\ref{s:ce}.

As mentioned before, we manage to establish positive results by adding to the first line of~\eqref{e:nlcpr} and~\eqref{e:clcpr} a second order perturbation. More precisely, we consider the family of Cauchy problems 
\begin{equation}
\label{e:cpr}
\left\{
\begin{array}{ll}
  \partial_t u_{\ee \nu} + \div \big[ u_{\ee \nu} b(u_{\ee \nu} \ast \eta_\ee) \big] = \nu \Delta u_{\ee \nu} \\
  u_{\ee \nu} (0, x) = \bar u(x), \\
\end{array}
\right.
\end{equation}
which depends on  two parameters $\ee >0$ and $\nu >0$. When $\ee \to 0^+$ and $\nu$ is fixed, the family of Cauchy problems~\eqref{e:cpr} \emph{formally} reduces to 
\begin{equation}
\label{e:vclr}
\left\{
\begin{array}{ll}
\partial_t u_\nu + \div \big[ u_\nu b (u_\nu) \big] = \nu \Delta u_\nu \\
u_\nu (0, x) = \bar u (x).  \\
\end{array}
\right.
\end{equation}
On the other hand, when $\nu \to 0^+$ and $\ee$ is fixed, the family of Cauchy problems formally reduces to~\eqref{e:nlcpr}, while~\eqref{e:vclr} reduces to~\eqref{e:clcpr} (see~\eqref{e:disegno} below for a scheme). The reason why we consider the viscous approximations~\eqref{e:cpr}, \eqref{e:vclr} is the following. As mentioned before, Question~\ref{?} is motivated by the numerical evidence exhibited in~\cite{ACT}. The numerical tests showing convergence are obtained by using a Lax-Friedrichs type scheme involving some so-called \emph{numerical viscosity}, as it typical of many numerical schemes for conservation laws (see the book by LeVeque~\cite{LeVeque} for an extended introduction). Very loosely speaking, the numerical viscosity consists of finite differences terms that mimic a second order operator like the Laplacian. For this reason, the analysis of the viscous approximation~\eqref{e:cpr},~\eqref{e:vclr}  may provide some insight in the understanding of the numerical tests. See also~\cite{ColomboCrippaKraySpinolo:numer} for further numerical investigations.

Our main result involving the singular limit from~\eqref{e:cpr} to \eqref{e:vclr} is the following. 
\begin{theorem}
\label{t:convbadata}
Let $b$ be a Lipschitz continuous function, $\bar u \in L^1 (\R^d) \cap L^\infty (\R^d)$, $\nu >0$ and $p$ such that
\begin{equation}
\label{e:condizionip}
    2 \leq  p<  \infty, \; p > d. 
\end{equation}
Let $u_{\ee \nu}$ and $u_\nu$ be the solutions of~\eqref{e:cpr} and~\eqref{e:vclr} starting from $\bar u$, respectively. Then
$$u_{\ee \nu} \to u_\nu \qquad \mbox{strongly in }
L^{\infty}_{\mathrm{loc}}([0, + \infty[; L^p(\R^d)).
$$\end{theorem}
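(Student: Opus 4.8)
The plan is to obtain the convergence by a quantitative stability (Grönwall) argument for the difference $u_{\ee\nu} - u_\nu$, exploiting the parabolic regularization coming from the $\nu\Delta$ term. The key point is that, for fixed $\nu>0$, both $u_{\ee\nu}$ and $u_\nu$ are genuine solutions of uniformly parabolic equations with the same initial datum $\bar u$, so they enjoy uniform-in-$\ee$ bounds in $L^\infty_{\loc}([0,\infty[; L^1\cap L^\infty)$ together with the usual parabolic smoothing estimates; the only difference between the two equations is that the drift of the first is $b(u_{\ee\nu}\ast\eta_\ee)$ while the drift of the second is $b(u_\nu)$, and $u_{\ee\nu}\ast\eta_\ee \to u_{\ee\nu}$ as $\ee\to 0$ in a controlled way once we have some equi-regularity of $u_{\ee\nu}$.

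First I would collect the a priori estimates. Testing the equation for $u_{\ee\nu}$ against $|u_{\ee\nu}|^{p-2}u_{\ee\nu}$ (and similarly $u_\nu$) gives, using $\div(u b(u\ast\eta_\ee))$ and integrating by parts, a bound on $\|u_{\ee\nu}(t)\|_{L^p}$ and on $\nu \int_0^t \|\,|u_{\ee\nu}|^{(p-2)/2}\nabla u_{\ee\nu}\|_{L^2}^2$, uniformly in $\ee$; the $L^1$ and $L^\infty$ bounds follow from the maximum principle / $L^1$-contraction for the linear (in $u$, with given drift) parabolic equation. Combined with parabolic regularity theory (e.g. $L^p$ parabolic estimates, or a Duhamel representation using the heat semigroup $e^{t\nu\Delta}$), these yield a uniform bound for $u_{\ee\nu}$ in, say, $L^q_{\loc}(]0,\infty[; W^{1,p})$ for $t$ bounded away from $0$, and in particular equi-continuity in $L^p$ in space on compact time intervals away from $t=0$, with the behaviour near $t=0$ controlled by the smoothing of the heat kernel. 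This is what will let me estimate $\|u_{\ee\nu}\ast\eta_\ee - u_{\ee\nu}\|_{L^p} \le \omega(\ee)$ with a modulus $\omega$ depending only on the (equi-)regularity of $u_{\ee\nu}$, hence going to $0$ uniformly on compact time intervals; near $t=0$ one uses instead the uniform $L^p$ bound and the fact that the contribution of the initial layer to the Grönwall estimate is integrable.

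Next, set $r_{\ee\nu} := u_{\ee\nu} - u_\nu$. Subtracting the two equations,
\begin{equation}
\partial_t r_{\ee\nu} - \nu\Delta r_{\ee\nu} = -\div\big[ u_{\ee\nu}\, b(u_{\ee\nu}\ast\eta_\ee) - u_\nu\, b(u_\nu)\big],
\end{equation}
and the right-hand side splits as
\begin{equation}
-\div\big[ r_{\ee\nu}\, b(u_{\ee\nu}\ast\eta_\ee)\big] - \div\big[ u_\nu\big( b(u_{\ee\nu}\ast\eta_\ee) - b(u_\nu)\big)\big].
\end{equation}
Testing against $|r_{\ee\nu}|^{p-2}r_{\ee\nu}$, the first term is handled by integration by parts and Young's inequality, absorbing a piece into the good term $\nu\int |\nabla(|r_{\ee\nu}|^{p/2})|^2$ coming from the Laplacian and leaving a contribution controlled by $C\|r_{\ee\nu}\|_{L^p}^p$ (using the $L^\infty$ bound on the drift via the Lipschitz bound on $b$ and the $L^\infty$ bound on $u_{\ee\nu}$). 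For the second term, write $b(u_{\ee\nu}\ast\eta_\ee) - b(u_\nu) = \big(b(u_{\ee\nu}\ast\eta_\ee) - b(u_{\ee\nu})\big) + \big(b(u_{\ee\nu}) - b(u_\nu)\big)$; the Lipschitz bound gives $|b(u_{\ee\nu}) - b(u_\nu)| \le \mathrm{Lip}(b)\,|r_{\ee\nu}|$, producing again a term $\lesssim \|r_{\ee\nu}\|_{L^p}^p$ (after integrating by parts and using $u_\nu \in L^\infty$ and bounds on $\nabla u_\nu$, or instead moving the derivative onto the test function), while $|b(u_{\ee\nu}\ast\eta_\ee) - b(u_{\ee\nu})| \le \mathrm{Lip}(b)\,|u_{\ee\nu}\ast\eta_\ee - u_{\ee\nu}|$ is the genuinely small, $\ee$-dependent error, bounded in $L^p$ by $\omega(\ee)$ from the previous step; it contributes a source term which, after the integration by parts and Young, is $\le C\,\omega(\ee)^p + C\|r_{\ee\nu}\|_{L^p}^p$ (here using $\nabla u_\nu \in L^q_{\loc}$ and Hölder, with the exponent condition $p>d$, $p\ge 2$ ensuring all the Sobolev embeddings and the absorption into the parabolic term are legitimate). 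Altogether one arrives at
\begin{equation}
\frac{d}{dt}\,\|r_{\ee\nu}(t)\|_{L^p}^p \le C(\nu, T, \|\bar u\|)\,\|r_{\ee\nu}(t)\|_{L^p}^p + C(\nu, T, \|\bar u\|)\,\omega(\ee)^p
\end{equation}
on $[0,T]$; since $r_{\ee\nu}(0)=0$, Grönwall gives $\|r_{\ee\nu}\|_{L^\infty([0,T];L^p)} \le C\,\omega(\ee) \to 0$, which is the claim.

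The main obstacle I anticipate is not the Grönwall step itself but making the term $\div[u_\nu(b(u_{\ee\nu})-b(u_\nu))]$ and the error $\div[u_\nu(b(u_{\ee\nu}\ast\eta_\ee)-b(u_{\ee\nu}))]$ rigorous: these require either $\nabla u_\nu \in L^q$-type control (parabolic regularity for $u_\nu$, which is standard but must be quantified in terms of $\|\bar u\|_{L^1\cap L^\infty}$ and $\nu$ only, with acceptable blow-up as $t\to 0^+$) or a careful integration by parts moving the gradient onto $|r_{\ee\nu}|^{p-2}r_{\ee\nu}$ and absorbing into $\nu\int|\nabla(|r_{\ee\nu}|^{p/2})|^2$. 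A secondary subtlety is controlling $\|u_{\ee\nu}\ast\eta_\ee - u_{\ee\nu}\|_{L^p}$ uniformly down to $t=0$: since at $t=0$ we only have $\bar u \in L^1\cap L^\infty$ (not better), the modulus $\omega(\ee,t)$ will degenerate as $t\to 0$, and one must check that $\int_0^T \omega(\ee,t)^p\,dt \to 0$, which follows from the heat-semigroup smoothing estimate $\|\nabla e^{t\nu\Delta}\bar u\|_{L^p} \lesssim t^{-1/2}\|\bar u\|_{L^p}$ being time-integrable to a positive power. The role of the hypotheses $2\le p<\infty$ and $p>d$ is precisely to make the energy estimate (the sign of the $|r|^{p-2}$ testing term and the Young-inequality absorption) and the Sobolev embeddings go through cleanly.
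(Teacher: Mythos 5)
Your overall architecture (a Gr\"onwall-type stability estimate for $r_{\ee\nu}=u_{\ee\nu}-u_\nu$, with the splitting $b(u_{\ee\nu}\ast\eta_\ee)-b(u_\nu)=\big(b(u_{\ee\nu}\ast\eta_\ee)-b(u_{\ee\nu})\big)+\big(b(u_{\ee\nu})-b(u_\nu)\big)$ isolating a commutator error and a Lipschitz term) is close in spirit to the paper's. But there is a genuine gap at the very first step: you assume ``uniform-in-$\ee$ bounds in $L^\infty_{\loc}([0,\infty[;L^1\cap L^\infty)$ together with the usual parabolic smoothing estimates'' for $u_{\ee\nu}$, and then equi-regularity in $W^{1,p}$, in order to produce the modulus $\omega(\ee)$ for $\|u_{\ee\nu}\ast\eta_\ee-u_{\ee\nu}\|_{L^p}$. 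Only the $L^1$ bound is uniform in $\ee$. The $L^\infty$ bound (and every $L^p$ bound with $p>1$) is obtained by a Gr\"onwall argument on $\|\div\, b(u_{\ee\nu}\ast\eta_\ee)\|_{L^\infty}$, and every available estimate of this quantity passes through a norm of $\eta_\ee$ other than $\|\eta_\ee\|_{L^1}$ (e.g.\ $\|\nabla\eta_\ee\|_{L^\infty}\sim\ee^{-d-1}$ or $\|\nabla\eta_\ee\|_{L^1}\sim\ee^{-1}$), so the resulting constants blow up as $\ee\to0^+$; this is visible in the statement of the well-posedness theorem, where the $L^\infty$ bound explicitly depends on $\ee$. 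Attempting to recover a uniform $L^\infty$ or $W^{1,p}$ bound via Duhamel leads to a quadratic integral inequality that closes only on a time interval shrinking with the size of the norm, so it does not yield the global-in-time equi-regularity your $\omega(\ee)$ requires. Without that, both the commutator term and the $L^\infty$ bound on the drift $b(u_{\ee\nu}\ast\eta_\ee)$ in your energy estimate are uncontrolled uniformly in $\ee$.

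The paper's proof is structured precisely to avoid this obstruction, and it is worth seeing how. The convolution error is never measured on $u_{\ee\nu}$: writing $u_{\ee\nu}\ast\eta_\ee-u_\nu=z_\ee\ast\eta_\ee+(u_\nu\ast\eta_\ee-u_\nu)$ with $z_\ee=u_{\ee\nu}-u_\nu$, the first piece is handled by Young's inequality (costing only $\|z_\ee\|_{L^p}$, no regularity), and the second by the $W^{1,p}$ regularity of the \emph{local} viscous solution $u_\nu$, which is controlled in terms of $\nabla\bar u$ and $\nu$ alone. The resulting inequality for $\|z_\ee\|_{L^p}$ is quadratic, so it is closed not by Gr\"onwall but by a continuation argument on time intervals of length $\tau_0\nu^\beta$ on which $\|z_\ee\|_{L^p}$ is kept small, iterated finitely many times; this is where the condition $\ee\le e^{-C\nu^{-\beta}}$ comes from. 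Finally, since the regular-data argument needs $\nabla\bar u\in L^p$, a rough datum is split as $\bar u=\bar u\ast\rho_\lambda+(\bar u-\bar u\ast\rho_\lambda)$, and separate perturbation and conditional stability lemmas control the effect of the small singular part on both semigroups, again by short-time iteration. If you want to rescue your scheme, the essential repairs are: (i) replace equi-regularity of $u_{\ee\nu}$ by regularity of $u_\nu$ plus Young's inequality for the $z_\ee\ast\eta_\ee$ piece; (ii) replace Gr\"onwall by a continuation argument to handle the quadratic term; (iii) add the mollification step for $\bar u\in L^1\cap L^\infty$ only.
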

Some remarks are here in order. First, the Cauchy problem~\eqref{e:cpr} has a unique weak solution, see Theorem~\ref{t:wp} in~\S~\ref{s:preliminary} for the precise statement. Second, in the case $d=1$, $b(u) =u$, $p=2$, $\bar u \in W^{1, \infty} (\R)$, Theorem~\ref{t:convbadata} was established by Calderoni and Pulvirenti~\cite{CalderoniPulvirenti}. The main novelties of Theorem~\ref{t:convbadata} with respect to the analysis in~\cite{CalderoniPulvirenti} can be summarized as follows: 
\begin{itemize}
\item We provide a completely different proof. Indeed, in~\cite{CalderoniPulvirenti} the authors explicitly compute the equations satisfied by the Fourier transforms $\hat{u}_{\ee \nu}$ and $\hat{u}_\nu$ and use them to control the $L^2$ norm of the difference. The proof explicitly uses the fact that $b(u)=u$ and the regularity of the initial datum.  
\item On the other hand, our argument is based on a-priori estimates obtained by extensively using energy estimates and the Duhamel representation formula. We first establish Theorem~\ref{t:convreg} in the case when the initial datum $\bar u$ is regular. Next, we introduce a careful perturbation argument and we establish the proof in the general case. Our argument is fairly robust, it applies to general functions $b$, to equations in several space dimensions, and to rough initial data, and provides more quantitative estimates, see Remark~\ref{p:diagonal} below. 
\end{itemize}
As a further remark, we explicitly point out that Theorem~\ref{t:convbadata} requires neither symmetry conditions on the convolution kernels $\eta_\ee$ nor sign conditions on the initial datum $\bar u$. 

Finally, we discuss the vanishing viscosity limit from~\eqref{e:cpr} to~\eqref{e:nlcpr}. Our result is the following. 
\begin{proposition}
\label{p:vnltonl}
Under the assumptions of Theorem~\ref{t:convbadata}, let $u_{\ee \nu}$ and $u_{\ee}$ satisfy~\eqref{e:cpr} and~\eqref{e:nlcpr}, respectively. For every $\ee >0$, we have that $u_{\ee \nu} \weaks u_\ee$ weakly$^\ast$ in $L^\infty_\mathrm{loc} ([0, + \infty[ \times \R^d)$ as $\nu \to 0^+$.
\end{proposition}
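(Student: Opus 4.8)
The plan is to fix $\ee > 0$ and to pass to the limit $\nu \to 0^+$ in the weak formulation of~\eqref{e:cpr}, exploiting the fact that for fixed $\ee$ the nonlocal velocity field is very regular as soon as we have uniform $L^\infty$ control. First I would record the uniform-in-$\nu$ bounds: by the maximum principle / the standard parabolic a-priori estimates (which are part of the well-posedness statement Theorem~\ref{t:wp} referenced in the excerpt), $\| u_{\ee\nu}(t,\cdot) \|_{L^1} \leq \|\bar u\|_{L^1}$ and $\| u_{\ee\nu}(t,\cdot)\|_{L^\infty} \leq \|\bar u\|_{L^\infty}$ uniformly in $\nu \in (0,1]$ and locally uniformly in $t$. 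Hence $\{u_{\ee\nu}\}_\nu$ is bounded in $L^\infty_{\loc}([0,+\infty[\times\R^d)$, and up to a subsequence $u_{\ee\nu} \weaks u_\ee^\star$ weakly$^\ast$ in that space for some limit $u_\ee^\star$; the whole job is then to identify $u_\ee^\star$ with the unique solution $u_\ee$ of~\eqref{e:nlcpr} and to upgrade to convergence of the full family.

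The key point is the passage to the limit in the nonlinear term $u_{\ee\nu}\, b(u_{\ee\nu}\ast\eta_\ee)$. Here the convolution acts as a strong regularizer: since $\eta_\ee \in C_c^\infty(\R^d)$ is fixed, the bound $\|u_{\ee\nu}\|_{L^\infty} \leq C$ gives uniform bounds on $u_{\ee\nu}\ast\eta_\ee$ and all of its space derivatives, and the equation~\eqref{e:cpr} then gives a uniform bound on $\partial_t(u_{\ee\nu}\ast\eta_\ee)$ (the viscous term contributes $\nu\,\Delta u_{\ee\nu}\ast\eta_\ee = \nu\, u_{\ee\nu}\ast\Delta\eta_\ee$, which is bounded uniformly in $\nu\le 1$). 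By the Aubin--Lions--Simon lemma, $u_{\ee\nu}\ast\eta_\ee$ is therefore precompact in $C_{\loc}([0,+\infty[; C^0_{\loc}(\R^d))$, so along the subsequence $u_{\ee\nu}\ast\eta_\ee \to u_\ee^\star \ast \eta_\ee$ locally uniformly, and hence $b(u_{\ee\nu}\ast\eta_\ee) \to b(u_\ee^\star\ast\eta_\ee)$ locally uniformly by continuity of $b$. Combining the weak$^\ast$ convergence of $u_{\ee\nu}$ with the strong local convergence of $b(u_{\ee\nu}\ast\eta_\ee)$ lets us pass to the limit in the product, while the viscous term $\nu\Delta u_{\ee\nu}$ tends to $0$ in the sense of distributions because $\|u_{\ee\nu}\|_{L^\infty}$ is bounded and $\nu\to 0^+$. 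Thus $u_\ee^\star$ is a weak solution of~\eqref{e:nlcpr} with datum $\bar u$; the initial condition is retained because the uniform bound on $\partial_t(u_{\ee\nu}\ast\eta_\ee)$, together with density, gives weak continuity in time down to $t=0$. By the uniqueness part of the well-posedness theory for~\eqref{e:nlcpr}, $u_\ee^\star = u_\ee$, and since every subsequence has a further subsequence converging to the same limit, the whole family converges: $u_{\ee\nu} \weaks u_\ee$ weakly$^\ast$ in $L^\infty_{\loc}([0,+\infty[\times\R^d)$ as $\nu\to 0^+$.

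The main obstacle is the nonlinear term, and the remark above is precisely how I would handle it: one cannot pass to the limit in $u_{\ee\nu}\, b(u_{\ee\nu}\ast\eta_\ee)$ from weak convergence alone, but the fixed smooth kernel $\eta_\ee$ buys strong compactness of the argument of $b$ for free, so no entropy/compensated-compactness machinery is needed (this is the structural difference with the singular limit $\ee\to 0$, where the counterexamples in Section~\ref{s:ce} show convergence genuinely fails). A secondary technical point is that one needs the well-posedness of~\eqref{e:nlcpr} — existence is obtained as a byproduct of the compactness argument, and uniqueness follows from the fact that for fixed $\ee$ equation~\eqref{e:nlcpr} is a linear continuity equation with a Lipschitz (indeed smooth-in-space) velocity field $b(u_{\ee}\ast\eta_\ee)$ once $u_\ee$ is known, so a Gr\"onwall argument on the difference of two solutions in, say, $L^1$ closes it; this is already contained in Theorem~\ref{t:wp}.
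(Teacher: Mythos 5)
Your proposal is correct and follows essentially the same route as the paper: uniform-in-$\nu$ bounds from Theorem~\ref{t:wp}, weak$^\ast$ compactness, strong local compactness of $u_{\ee\nu}\ast\eta_\ee$ obtained from uniform space and time derivative bounds (the paper uses Ascoli--Arzel\`a where you invoke Aubin--Lions, which is the same mechanism here), identification of the limit of the convolution by duality against test functions, and the uniqueness part of Proposition~\ref{p:exunice} to upgrade subsequential convergence to convergence of the whole family. The only minor imprecision is that the uniform $L^\infty$ bound is not $\|\bar u\|_{L^\infty}$ itself (the velocity field is not divergence-free, so there is no clean maximum principle) but the $\ee$-dependent, $\nu$-independent constant of~\eqref{e:elleunoinfinito}, which is all the argument needs.
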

Note that in the statement of Proposition~\ref{p:vnltonl} the parameter $\ee >0$ is fixed and  hence the weak$^\ast$ convergence suffices to pass to the limit in the equation, owing to the regularizing effect of the convolution. Also, note that at the local level the vanishing viscosity limit from~\eqref{e:vclr} to~\eqref{e:clcpr} is established in the work by Kru{\v{z}}kov~\cite{Kruzkov}. 

The take-home message obtained by combining the counterexamples, Theorem~\ref{t:convbadata}, Kru{\v{z}}kov's Theorem and Proposition~\ref{p:vnltonl} can be therefore represented as follows: 
\begin{equation}
\label{e:disegno}
\minCDarrowwidth70pt
\begin{CD}
 \partial_t u_{\ee \nu} + \div \big[ u_{\ee \nu} b(u_{\ee \nu} \ast \eta_\ee) \big] = \nu \Delta u_{\ee \nu}     @> \ee \to 0^+  >  \text{Theorem~\ref{t:convbadata} }>  \partial_t u_\nu + \div \big[ u_\nu b (u_\nu) \big] = \nu \Delta u_\nu \\
@V \nu \to 0^+  V \text{Proposition~\ref{p:vnltonl}} 
V        @V \nu \to 0^+ V \text{Kru{\v{z}}kov's Theorem} V\\
 \partial_t u_{\ee } + \div \big[ u_{\ee } b(u_{\ee } \ast \eta_\ee) \big] = 0      @>  \ee \to 0^+ > \text{False in general}>   \partial_t u + \div \big[ u b(u) \big] = 0
\end{CD}
\end{equation}
To conclude, we make two remarks concerning i) the ``diagonal" convergence, which can be tracked explicitly in the case of regular initial data, and ii) some 
open questions. 
\begin{remark}
\label{p:diagonal}
Under the assumptions of Theorem~\ref{t:convbadata}, let $u_{\ee \nu}$ satisfy~\eqref{e:cpr}, let $u$ be the Kru{\v{z}}kov entropy admissible solution of~\eqref{e:clcpr}, and fix $p$ satisfying~\eqref{e:condizionip}. Combining Kru{\v{z}}kov's Theorem with Theorem~\ref{t:convbadata} and by a diagonal argument we infer that there is a sequence $(\ee_n, \nu_n)$ such that $\ee_n \to 0^+$, $\nu_n \to 0^+$ and $u_{\ee_n \nu_n} \to u$ strongly in 
$L^{\infty}_{\mathrm{loc}}([0, + \infty[; L^p(\R^d))$, as $n \to + \infty$.
In the case when the initial datum is sufficiently regular, namely $\overline u \in W^{1,p}(\R^d)$, we explicitly determine a coupling $\ee\leq e^{-C \nu^{-\beta}}$ (for constants $C>0$ and $\beta>0$ specified later) under which the above diagonal convergence holds true
(see Theorem~\ref{e:convreg} below).
\end{remark}
\begin{remark}
In the last few years, several authors have studied nonlocal traffic models like~\eqref{e:intro} in the case when $d=1$ and the convolution term $w\ast \eta$
only takes into account the downstream traffic density, see for instance~\cite{BlandinGoatin}. The convolution term in these models does not satisfy the regularity requirement in~\eqref{e:eta} because it is piecewise smooth with one or two discontinuity points. We are confident that  the regularity requirement in~\eqref{e:eta} can be weakened and that Theorem~\ref{t:convbadata} can be extended to the viscous version of the model described in~\cite{BlandinGoatin}. Note, however, that the counterexamples discussed in~\S~\ref{s:ce} do not apply to the model discussed in~\cite{BlandinGoatin}: whether or not the singular limit from~\eqref{e:nlcpr} to~\eqref{e:clcpr} can be rigorously justified in this case is presently an open problem. Partial results have been recently obtained in~\cite{ColomboCrippaSpinolo:blowup}. 
\end{remark}
\subsection*{Paper outline}
The paper is organized as follows. In~\S~\ref{s:preliminary} we establish well-posedness of the Cauchy problem~\eqref{e:cpr}, we  slightly extend known well-posedness results for~\eqref{e:nlcpr} and we establish Proposition~\ref{p:vnltonl}. In~\S~\ref{s:regdata} we establish Theorem~\ref{t:convbadata} under the additional assumption that the initial datum $\bar u$ is regular. In~\S~\ref{s:teorema11} we complete the proof of Theorem~\ref{t:convbadata} and in~\S~\ref{s:ce} we discuss the counterexamples to the nonlocal to local limit from~\eqref{e:nlcpr} to~\eqref{e:clcpr}. 
\subsection*{Notation}
For the readers' convenience, we recall here the main notation used in the present paper. 

We denote by $C(a_1, \dots, a_N)$ a constant only depending on the quantities $a_1, \dots, a_N$. Its precise value can vary from occurrence to occurrence. 

\subsubsection*{General mathematical symbols}
\begin{itemize}
\item $f \ast g$: the convolution of the functions $f$ and $g$, computed with respect to the variable $x$ only. 
\item $\div f$: the divergence of the vector field $f$, computed with respect to the $x$ variable only.
\item $\mathbf{1}_E$: the characteristic function of the measurable set $E$. 
\item $|E|$: the Lebesgue measure of the measurable set $E$. 
\item $L^p$: the Lebesgue space $L^p(\R^d)$, $p \in [1, + \infty]$. 
\item $\| \cdot \|_{L^p}$: the standard norm in $L^p (\R^d)$. 
\end{itemize}
\subsubsection*{Symbols introduced in the present paper}
\begin{itemize}
\item $b$: the vector-valued function satisfying~\eqref{e:v}.
\item $L$: the Lipschitz constant in~\eqref{e:v}. 
\item $\eta, \eta_\ee$: the convolution kernel in~\eqref{e:eta} and~\eqref{e:etaee}.
\item $u$: the entropy solution of the conservation law~\eqref{e:clcpr}.
\item $u_\ee$: the solution of the nonlocal nonviscous problem~\eqref{e:nlcpr}.
\item $u_\nu$: the solution of the local viscous problem~\eqref{e:vclr}.
\item $u_{\ee \nu}$: the solution of the nonlocal nonviscous problem~\eqref{e:cpr}.
\item $G, G_\nu$: the heat kernel in~\eqref{e:G} and~\eqref{e:Gnu}.
\item $S^{\ee \nu}_t, \ S^{\nu}_t:$ the semigroups defined in~\eqref{e:semigroups}. 
\end{itemize}
\begin{remark}
\label{r:b}
Consider the Lipschitz continuous function $b: \R \to \R^d$ in~\eqref{e:nlcpr},~\eqref{e:clcpr},~\eqref{e:cpr} and~\eqref{e:vclr}.  We can assume, with no loss of generality, that  $b(0)= 0$. Indeed, assume that this is not the case and that $b(0) =  \xi \neq 0$. Assume furthermore that the function $u_{\ee \nu}$ satisfies~\eqref{e:cpr}, then we can set  
$$
    \tilde u_{\ee \nu} (t, x) : = u_{\ee \nu} (t, x-  \xi t),
$$
and obtain that $ \tilde u_{\ee \nu}$ satisfies 
$$
    \partial_t  \tilde u_{\ee \nu} + \div \Big[  \tilde u_{\ee \nu}  \tilde b( \tilde u_{\ee \nu} \ast \eta_\ee)
     \Big] = \nu \Delta  \tilde u_{\ee \nu}
\qquad \mbox{where}\qquad
    \tilde b( \tilde u_{\ee \nu} \ast \eta_\ee): =  b( \tilde u_{\ee \nu} \ast \eta_\ee)-  \xi. 
 $$
For this reason  in the following we assume that $b$ satisfies 
\begin{equation}
\label{e:v}
      b(0)= 0, \qquad 
     |b(x) - b(y)| \leq L |x -y| \; \text{for every $x, y \in \R$}. 
\end{equation}
\end{remark}
\begin{remark}
Theorem~\ref{t:convbadata} states that $u_{\ee \nu} \to u_\nu$ strongly in  $L^{\infty}_{\mathrm{loc}}([0, + \infty[; L^p)$, hence to establish the thesis it suffices to prove that, for every $T>0$, 
$u_{\ee \nu} \to u_\nu$ strongly in  $L^{\infty}([0, T]; L^p)$. 
A similar remark applies to Proposition~\ref{p:vnltonl} and to the other positive results, which are all local in time. 
To simplify the notation, in the following we take $T=1$. 
\end{remark}
\section{Preliminary results: well-posedness of the viscous and nonviscous Cauchy problem with nonlocal fluxes}
\label{s:preliminary}
This section is organized as follows: in~\S~\ref{ss:wpviscous} for the sake of completeness we establish well-posedness of the nonlocal viscous Cauchy problem~\eqref{e:cpr}. We rely on fairly standard energy estimates and we apply a fixed point argument. In~\S~\ref{ss:exunice} we establish a uniqueness result for the nonlocal conservation law~\eqref{e:nlcpr} that slightly extends previous results in~\cite{ACT, ColomboHertyMercier,CLM,KePl}. Finally, in~\S~\ref{ss:proofp} we establish the proof of the nonlocal vanishing viscosity result stated in Proposition~\ref{p:vnltonl}.  Since in this case the nonlocal parameter $\ee>0$ is kept constant, weak convergence suffices to pass to the limit.  
\subsection{Well-posedness of the viscous Cauchy problem with a nonlocal flux}
\label{ss:wpviscous}
We establish the following well-posedness result. 
\begin{theorem}
\label{t:wp}        
        Let $\bar u \in L^1 \cap L^\infty(\R^d) $ and let $b$ satisfy~\eqref{e:v}.  Then the nonlocal viscous Cauchy problem~\eqref{e:cpr} has a
        distributional solution $u_{\ee \nu}$, unique in the class~\eqref{e:elleunoinfinito}-\eqref{e:spazioparabolico}, that satisfies 
        \begin{equation}
        \label{e:elleunoinfinito}
            \| u_{\ee \nu}(t, \cdot) \|_{L^1} \leq \| \bar u \|_{L^1}, 
            \quad 
            \| u_{\ee \nu}(t, \cdot) \|_{L^\infty} \leq C(\| \bar u \|_{L^\infty},
            \| \bar u \|_{L^1}, \| \nabla \eta \|_{L^\infty}, L,  d, \ee), 
            \quad \text{for every $t \in [0, 1]$,} 
        \end{equation}
        \begin{equation}
        \label{e:spazioparabolico}
    \partial_t u_{\ee \nu} \in L^2 ([0, 1]; H^{-1} (\R^d)), \quad 
    u_{\ee \nu} \in L^2 ([0, 1]; H^1 (\R^d)). 
        \end{equation} 
        \end{theorem}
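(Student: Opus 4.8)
\textbf{Proof plan for Theorem~\ref{t:wp}.}

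The plan is to establish well-posedness of~\eqref{e:cpr} for fixed $\ee, \nu > 0$ by a contraction argument built around the heat semigroup, combined with energy estimates to propagate the $L^1$ and $L^\infty$ bounds. First I would fix $\ee, \nu$ and work on a short time interval $[0, \tau]$ with $\tau$ to be chosen. Given a candidate $v \in C([0,\tau]; L^1 \cap L^\infty(\R^d))$, the convolution $v \ast \eta_\ee$ is smooth in $x$ with all derivatives controlled by $\|v(t,\cdot)\|_{L^1}$ and the corresponding derivatives of $\eta_\ee$ (here the factors $\ee^{-d}$, $\ee^{-d-1}$, etc., enter, which is why the $L^\infty$ bound in~\eqref{e:elleunoinfinito} is allowed to depend on $\ee$). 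Hence $b(v \ast \eta_\ee)$ is a bounded, spatially Lipschitz vector field, and one can define the map $\Phi(v) = u$ where $u$ solves the \emph{linear} advection-diffusion equation $\partial_t u + \div[u\, b(v \ast \eta_\ee)] = \nu \Delta u$ with $u(0,\cdot) = \bar u$, represented via Duhamel as
\begin{equation}
\label{e:wpduhamel}
u(t, \cdot) = G_\nu(t, \cdot) \ast \bar u - \int_0^t \div\Big( G_\nu(t-s, \cdot) \ast \big[ u(s, \cdot)\, b(v \ast \eta_\ee)(s, \cdot) \big] \Big)\, ds.
\end{equation}
The smoothing of $G_\nu$ (one derivative costs a factor $(t-s)^{-1/2}$, which is integrable) makes $\Phi$ well-defined on the space $C([0,\tau]; L^1 \cap L^\infty)$ and, for $\tau$ small depending on $\ee, \nu$ and the data, a contraction in the $C([0,\tau]; L^1)$ metric; a standard iteration yields a local-in-time distributional solution.

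Next I would upgrade the local solution to a global one on $[0,1]$ by propagating a priori bounds that do not deteriorate in finite time. For the $L^1$ bound one integrates the equation (or tests against a smooth approximation of $\mathrm{sgn}(u)$, using that $\div[u\,b(\cdot)]$ and $\nu \Delta u$ integrate to zero) to get $\|u_{\ee\nu}(t,\cdot)\|_{L^1} \leq \|\bar u\|_{L^1}$; equivalently this follows directly from~\eqref{e:wpduhamel} since $G_\nu$ is a probability density and the divergence term has zero integral. For the $L^\infty$ bound, since $\div b(v \ast \eta_\ee)$ is bounded by $L \|\nabla \eta_\ee\|_{L^\infty}\|v(t,\cdot)\|_{L^1} \leq L \ee^{-d-1}\|\nabla\eta\|_{L^\infty}\|\bar u\|_{L^1}$, one rewrites the equation as $\partial_t u + b(v\ast\eta_\ee)\cdot\nabla u = \nu \Delta u - u\,\div b(v\ast\eta_\ee)$ and applies the maximum principle (or a Gronwall argument on $\|u(t,\cdot)\|_{L^\infty}$ via Duhamel), obtaining $\|u_{\ee\nu}(t,\cdot)\|_{L^\infty} \leq \|\bar u\|_{L^\infty}\exp(C t)$ with $C$ as in~\eqref{e:elleunoinfinito}. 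These bounds are uniform on $[0,1]$ and the contraction time $\tau$ depends only on these same quantities, so the local solution extends to $[0,1]$. The parabolic regularity~\eqref{e:spazioparabolico} then comes from a standard energy estimate: multiply the equation by $u_{\ee\nu}$ and integrate to bound $\|u_{\ee\nu}\|_{L^2([0,1];H^1)}$ in terms of $\|\bar u\|_{L^2} \leq \|\bar u\|_{L^1}^{1/2}\|\bar u\|_{L^\infty}^{1/2}$ and the flux bounds; the bound on $\partial_t u_{\ee\nu}$ in $L^2([0,1];H^{-1})$ follows by reading it off from the equation.

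For uniqueness within the class~\eqref{e:elleunoinfinito}--\eqref{e:spazioparabolico}, I would take two such solutions $u^1, u^2$, write the equation for the difference $w = u^1 - u^2$, and estimate $\|w(t,\cdot)\|_{L^2}$ (or $\|w(t,\cdot)\|_{L^1}$) by a Gronwall argument: the difference of the flux terms splits as $u^1 b(u^1 \ast \eta_\ee) - u^2 b(u^2 \ast \eta_\ee) = w\, b(u^1\ast\eta_\ee) + u^2\big(b(u^1\ast\eta_\ee) - b(u^2\ast\eta_\ee)\big)$, and the second term is controlled using the Lipschitz bound on $b$ together with $|(w \ast \eta_\ee)(x)| \leq \ee^{-d}\|\eta\|_{L^\infty}\|w(t,\cdot)\|_{L^1}$ and the $L^\infty$ bound on $u^2$; the diffusion term has the right sign after integration by parts. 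This closes a Gronwall inequality forcing $w \equiv 0$. The main obstacle, though mostly bookkeeping rather than a genuine difficulty, is tracking the $\ee$-dependent constants carefully through the fixed-point step — in particular verifying that the contraction time and all a priori constants depend only on the quantities listed in~\eqref{e:elleunoinfinito} and not, circularly, on the solution being constructed — and handling the low regularity of $\bar u \in L^1 \cap L^\infty$ (rather than, say, $L^2$ or $H^1$) so that the energy estimate~\eqref{e:spazioparabolico} is justified, which is done by first establishing everything for smooth truncated data and passing to the limit.
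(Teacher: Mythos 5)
Your proposal is correct and follows essentially the same strategy as the paper: freeze the convolution argument to obtain a linear advection--diffusion equation, run a short-time fixed-point/contraction argument (the paper contracts in $C^0([0,\tau];L^2)$ via energy estimates and classical linear parabolic theory, rather than your Duhamel/$L^1$ setup, but this is a minor implementation choice), iterate to cover $[0,1]$, and obtain the $L^\infty$ bound from $\|\bar u\|_{L^\infty}\exp\big(t\,\|\div [b(\zeta\ast\eta_\ee)]\|_{L^\infty}\big)$ with the divergence controlled through the $L^1$ bound. Your uniqueness argument via the flux splitting and Gr\"onwall coincides with the paper's contraction estimate.
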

\begin{remark}
\label{r}
The function $u_{\ee \nu}$ is in principle only defined for a.e. $(t, x)$. However, the regularity~\eqref{e:spazioparabolico} implies that, up to changing $u_{\ee \nu}$ in a set of measure $0$ in $[0,1]\times \R^d$, we can assume that $u_{\ee \nu} \in C^0 ([0, 1]; L^2(\R^d))$. In the following, we always consider this $L^2$-continuous representative; in this way the function $u_{\ee \nu}$ is well-defined \emph{for every $t$} and the estimates~\eqref{e:elleunoinfinito} hold for every $t$. 
\end{remark}

\begin{proof}[Proof of Theorem~\ref{t:wp}]
To simplify the notation, let $\ee=1$, $\nu =1$ and consider the Cauchy problem
\begin{equation}
\label{e:mdcp}
\left\{
\begin{array}{ll}
  \partial_t v + \div \big[ v b(v \ast \eta) \big] = \Delta v \\
  v (0, x) = \bar u(x). \\
\end{array}
\right.
\end{equation}
The proof straightforwardly extends to the general case and relies on a classical fixed point argument that we sketch below.\\
{\sc Step 1:} we introduce the functional setting. We fix a constant $0<\tau<1$, to be determined in the following, and we 
define the set $X$ by setting 
\begin{equation}
\label{e:X}
   X: = \big\{ z \in C^0 ([0, \tau] ; L^2 (\R^d)): \; \| z (t, \cdot) \|_{L^1} \leq \|\bar u \|_{L^1} \; \forall \, t \in [0, \tau]
   \big\} .
\end{equation}
We fix a function $\zeta \in X$ and we consider the Cauchy problem 
\begin{equation}
\label{e:auxcp}
\left\{
\begin{array}{ll}
  \partial_t z + \div \big[ z b(\zeta \ast \eta) \big] = \Delta z \\
  z (0, x) = \bar u(x) . \\
\end{array}
\right.
\end{equation}
Since $\zeta$ is now fixed, the equation at the first line of the above system is a standard linear parabolic equation with
smooth coefficients. By using classical methods for evolution equations (see for instance~\cite[\S~7]{Evans}) one can show that~\eqref{e:auxcp} has a unique solution satisfying 
$$
    \partial_t z \in L^2 ([0, \tau]; H^{-1} (\R^d)), \quad 
    z \in L^2 ([0, \tau]; H^1 (\R^d)),   
$$ 
which implies that (up to re-defining $z$ on a negligible set of times) $z \in C^0 ([0, \tau]; L^2 (\R^d))$. In the following, we always identify $z$ and its $L^2$-continuous representative, in such a way that $z(t, \cdot)$ is well-defined for every $t>0$. We define the map $T$ by setting $T(\zeta)=z$, where $z$ is the solution of~\eqref{e:auxcp}.   \\
{\sc Step 2:} we show that the map $T$ defined as in {\sc Step 1} attains values in $X$. We fix a regular function $\beta: \R \to \R$ and by multiplying the equation at the first line of~\eqref{e:auxcp} times $\beta' (z)$ we get 
\begin{equation}
\label{e:eqbeta}
    \partial_t \big[ \beta (z)\big] + \div \big[ b(\zeta \ast \eta) \beta (z)
    \big]+
    \div \big[b(\zeta \ast \eta) \big] \big( z \beta'(z) - \beta (z) \big)=
    \div \big[ \nabla z \beta' (z) \big] - \beta''(z) |\nabla z|^2.
\end{equation}
We point out that by \eqref{e:v} and \eqref{e:X}
$$
    | \div \big[b(\zeta \ast \eta) \big]|  
   {\leq}
    C(L, d)  \| \nabla \eta \|_{L^\infty} \| \bar u \|_{L^1}.
$$
By space-time integrating~\eqref{e:eqbeta} we get 
\begin{equation}
\begin{split}
\label{e:eqapriori}
       \int_{\R^d} \! \! \beta(z) (t, \cdot) dx & -
        \int_{\R^d} \! \! \beta(\bar u) dx + \int_0^t \! \! \int_{\R^d} 
       \beta''(z) |\nabla z|^2 dx ds \\
       & \leq 
                  C(L, d) \| \bar u \|_{L^1}  
          \| \nabla \eta \|_{L^\infty}  
        \int_0^t \! \! \int_{\R^d}  
        |  z \beta'(z) - \beta (z) |
        dx ds, \qquad \text{for every $t \in [0, \tau]$.} \\ 
        \end{split}
\end{equation}
By applying~\eqref{e:eqapriori} with $\beta(z)=z^2$ and using the Gr\"onwall Lemma we get that for every $t \in [0, \tau]$
\begin{equation}
\label{e:puntofissoelledue}
       \| z(t, \cdot) \|_{L^2} \leq C(L, d, \| \bar u \|_{L^1},  
          \| \nabla \eta \|_{L^\infty}  ) \| \bar u \|_{L^2}. 
\end{equation}
Also, by using~\eqref{e:eqapriori} and choosing a suitable approximation of $ \beta(z) = |z|$, we get 
\begin{equation}
\label{e:elleuno}
        \int_{\R^d} \! \! |z| (t, \cdot) dx -
        \int_{\R^d} \! \! |\bar u| dx \leq 0.  
\end{equation}
This implies that the solution of~\eqref{e:auxcp}, i.e. $T(\zeta)$, belongs to the set $X$ defined as in~\eqref{e:X}. \\
{\sc Step 3:} we show that the map $T$ defined as in {\sc Step 1} is a contraction provided that $\tau$ is sufficiently small. 
We fix $\zeta_1, \zeta_2 \in X$ and we term $z_1=T(\zeta_1)$ and $z_2= T(z_2)$. First, we point out that owing to the Young Inequality  
\begin{equation} \label{e:stimab1b2}
 \| b(\zeta_1 \ast \eta) -  
       b(\zeta_2 \ast \eta)  \|_{L^\infty} 
    \leq
      L \| (\zeta_1- \zeta_2)\ast \eta \|_{L^\infty} \leq L
       \| (\zeta_1- \zeta_2) \|_{C^0([0, \tau]; L^2 )} 
        \|  \eta \|_{L^2},
\end{equation}
for every $t\in [0, \tau]$. 
By subtracting the equation for $z_2$ from the equation for $z_1$ we get 
$$
   \partial_t \big[ z_1 - z_2 \big]
   + \div \Big[ 
   [z_1 -z_2]  b(\zeta_1 \ast \eta) + z_2 
   \big[ b(\zeta_1 \ast \eta) -
   b(\zeta_2 \ast \eta)  \big] \Big] = \Delta [z_1 - z_2 ]. 
$$  
By arguing as in {\sc Step 2} and recalling that $z_2 \equiv z_1$ at $t=0$, we arrive at 
\begin{equation}
\label{e:contra}
\begin{split}
     \int_{\R^d} & \! \! |z_1 - z_2|^2 (t, \cdot) dx 
      \leq 
      C(L, d, \| \bar u \|_{L^1},  
          \| \nabla \eta \|_{L^\infty} )  
    \int_0^t  \int_{\R^d} \! \! |z_1 - z_2|^2 (s, \cdot) dx  ds \\
    & + 2
    \left| \int_0^t \int_{\R^d} \div \Big[ z_2 
   \big[ b(\zeta_1 \ast \eta) -
   b(\zeta_2 \ast \eta)  \big] \Big] (z_1 -z_2) dx ds  \right| 
   - 2    \int_0^t \int_{\R^d}| \nabla [z_1 - z_2] |^2 dx ds .
   \end{split}
 \end{equation}
Next, we point out that 
  \begin{equation*}
\begin{split}
       \left| \int_0^t  \int_{\R^d} \right. & \left. \phantom{\int} \!\!\!\!\!\!\! \div \Big[ z_2 
   \big[ b(\zeta_1 \ast \eta) -
   b(\zeta_2 \ast \eta)  \big] \Big] (z_1 -z_2) dx ds \right| =
       \left| \int_0^t \int_{\R^d}   z_2 
   \big[ b(\zeta_1 \ast \eta) -
   b(\zeta_2 \ast \eta)  \big] \cdot  \nabla[z_1 -z_2] dx ds  \right| \\
   & \leq  \frac{1}{2}
    \int_0^t \int_{\R^d} z_2^2 
    \big| b(\zeta_1 \ast \eta) -
   b(\zeta_2 \ast \eta)  \big|^2 dx ds + \frac{1}{2} 
   \int_0^t \int_{\R^d} | \nabla[z_1 -z_2]|^2 dx ds.
\end{split}
\end{equation*}
To control the first term in the right hand side of the above expression we combine~\eqref{e:puntofissoelledue} and~\eqref{e:stimab1b2}.  
By plugging the above inequality into~\eqref{e:contra} we then arrive at 
 \begin{equation*}
\begin{split}
     \int_{\R^d} & \! \! |z_1 - z_2|^2 (t, \cdot) dx 
      \leq 
      C(L, d, \| \bar u \|_{L^1}, \eta ) \left[  
    \int_0^t  \int_{\R^d} \! \! |z_1 - z_2|^2 (s, \cdot) dx  ds  + \tau 
     \| \bar u \|^2_{L^2}    
     \| (\zeta_1- \zeta_2) \|^2_{C^0([0, \tau]; L^2 )} \right]
\end{split}   
\end{equation*}   
and owing to the Gr\"onwall Lemma and recalling that $z_1 = T (\zeta_1)$, $z_2 = T (\zeta_2)$ this implies that $T$ is a contraction provided that $\tau$ is sufficiently small. To establish existence and uniqueness on the interval $[0, 1]$ we iterate the above argument a finite number of times. \\
{\sc Step 4:} we establish the $L^\infty$ estimate. We recall~\eqref{e:auxcp}, we set 
$$
    \Xi: = \| \mathrm{div} [ b(\zeta \ast \eta)  ] \|_{L^\infty}
$$
and we point out that the solution $z$ of the Cauchy problem~\eqref{e:auxcp} satisfies 
\begin{equation}
\label{e:linftyaux2}
    \| z (t, \cdot) \|_{L^\infty}
    \leq \| \bar u \|_{L^\infty} \exp (\Xi t), 
    \quad \text{for every $t$}. 
\end{equation}
The proof of the above estimate is  standard, and can be found for instance in~\cite[Lemma 3.4]{ProcHyp2012}.
By construction, the solution of~\eqref{e:mdcp} satisfies~\eqref{e:auxcp} provided that $\zeta= z$. If this is the case, by~\eqref{e:elleuno}
$$
     \Xi = \| \mathrm{div} [ b(z \ast \eta)  ] \|_{L^\infty} \leq 
     C(L, d) \| z \|_{L^1} \| \nabla \eta \|_{L^\infty} 
     \leq
      C(L, d) \| \bar u \|_{L^1} \| \nabla \eta \|_{L^\infty} 
$$
and owing to~\eqref{e:linftyaux2} this establishes the $L^\infty$ estimate in~\eqref{e:elleunoinfinito}.  
\end{proof}
\subsection{Well-posedness of the Cauchy problem for a continuity equation with nonlocal flux }
\label{ss:exunice}
In this section we establish an existence and uniqueness result that slightly 
extends the well-posedness result in~\cite{ACT} (see also~\cite{ColomboHertyMercier,KePl}).   
\begin{proposition}
\label{p:exunice}
Assume that $b$ and $\eta$ satisfy~\eqref{e:v} and~\eqref{e:eta}, respectively, and that $\bar u \in L^1 \cap L^\infty $. 
Then the Cauchy problem~\eqref{e:nlcpr} has a distributional solution that satisfies 
$$
u_\ee \in L^\infty_{\mathrm{loc}} ([0, + \infty[; L^\infty) \cap C^0 ([0, + \infty[; L^1).
$$  
Also, the solution is unique in the class of locally bounded, distributional solutions. 
\end{proposition}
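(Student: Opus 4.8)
The plan is to prove Proposition~\ref{p:exunice} in two parts: existence and uniqueness.

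\textbf{Existence.} I would obtain $u_\ee$ as the vanishing viscosity limit of the solutions $u_{\ee\nu}$ of~\eqref{e:cpr}, exactly as in Proposition~\ref{p:vnltonl}. Concretely, fix $\ee>0$; by Theorem~\ref{t:wp} the problems~\eqref{e:cpr} have solutions $u_{\ee\nu}$ with uniform (in $\nu$) bounds $\|u_{\ee\nu}(t,\cdot)\|_{L^1}\le\|\bar u\|_{L^1}$ and $\|u_{\ee\nu}(t,\cdot)\|_{L^\infty}\le C$ with $C$ independent of $\nu$ (the $L^\infty$ bound in~\eqref{e:elleunoinfinito} depends only on $\|\bar u\|_{L^\infty}$, $\|\bar u\|_{L^1}$, $\|\nabla\eta\|_{L^\infty}$, $L$, $d$, $\ee$, not on $\nu$). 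Hence up to a subsequence $u_{\ee\nu}\weaks u_\ee$ weakly$^\ast$ in $L^\infty_{\mathrm{loc}}([0,+\infty[\times\R^d)$, with $u_\ee\in L^\infty_{\mathrm{loc}}([0,+\infty[;L^\infty)$. The key point, which is the regularizing effect of the convolution, is that $u_{\ee\nu}\ast\eta_\ee \to u_\ee\ast\eta_\ee$ strongly in $L^p_{\mathrm{loc}}$ for every $p<\infty$: indeed $u_{\ee\nu}\ast\eta_\ee$ is bounded in $L^\infty_t$ of a Sobolev space in $x$ (one spatial derivative falls on $\eta_\ee$), and $\partial_t(u_{\ee\nu}\ast\eta_\ee)$ is controlled in a negative-order space via the equation, so by Aubin--Lions one has strong precompactness in space-time. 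Then $u_{\ee\nu}\, b(u_{\ee\nu}\ast\eta_\ee)\to u_\ee\, b(u_\ee\ast\eta_\ee)$ (product of a weakly$^\ast$ convergent bounded sequence with a strongly convergent one, using Lipschitz continuity of $b$), and $\nu\Delta u_{\ee\nu}\to 0$ in distributions since $\|u_{\ee\nu}\|_{L^\infty}$ is bounded uniformly. Passing to the limit in the weak formulation of~\eqref{e:cpr} shows $u_\ee$ is a distributional solution of~\eqref{e:nlcpr}. The $C^0([0,+\infty[;L^1)$ regularity follows by a standard argument: since $u_\ee\ast\eta_\ee$ is fixed and smooth once $u_\ee$ is known, $u_\ee$ solves a linear transport equation with smooth, bounded velocity field $b(u_\ee\ast\eta_\ee)$, whose bounded distributional solution is renormalized and transported by the flow, giving $L^1$-continuity in time (and also $L^1$-contraction/conservation of the $L^1$ norm). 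Alternatively one reads this continuity off directly from uniform-in-$\nu$ $L^1$ estimates plus equicontinuity in time.

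\textbf{Uniqueness.} Suppose $u_\ee^1, u_\ee^2$ are two locally bounded distributional solutions of~\eqref{e:nlcpr} with the same initial datum. Set $r_i := u_\ee^i\ast\eta_\ee$; each $r_i$ is bounded, Lipschitz in $x$ uniformly on compact time intervals, and the velocity fields $b_i := b(r_i)$ are bounded with bounded spatial divergence $\div b_i = b'(r_i):\nabla r_i$ controlled by $C(L,d)\|\nabla\eta\|_{L^\infty}\|\bar u\|_{L^1}$ (using $\|u_\ee^i(t,\cdot)\|_{L^1}\le\|\bar u\|_{L^1}$, which one first derives). Now $w := u_\ee^1 - u_\ee^2$ solves, in the distributional sense,
\[
\partial_t w + \div\big[w\, b(r_1)\big] = -\div\big[u_\ee^2\,(b(r_1)-b(r_2))\big].
\]
Since $b(r_1)$ is a bounded, spatially-Lipschitz field, the left-hand operator is well-posed and renormalizable, so one can test against $\mathrm{sgn}(w)$ (or a smooth approximation) and integrate, using $\|b(r_1)-b(r_2)\|_{L^\infty}\le L\|\eta_\ee\|_{L^2}\|w(t,\cdot)\|_{L^2}$ (Young's inequality, as in~\eqref{e:stimab1b2}) — or better, to stay in $L^1$, use $\|b(r_1)-b(r_2)\|_{L^\infty}\le L\|\nabla\eta_\ee\|_{L^\infty}\|w(t,\cdot)\|_{L^1}$ and a similar bound on $\|\nabla(b(r_1)-b(r_2))\|_{L^\infty}$ together with $\|u_\ee^2\|_{L^\infty}$ bounded — to obtain
\[
\frac{d}{dt}\|w(t,\cdot)\|_{L^1} \le C(\ee,L,d,\eta,\|\bar u\|_{L^1},\|\bar u\|_{L^\infty})\,\|w(t,\cdot)\|_{L^1},
\]
whence $w\equiv 0$ by Gr\"onwall. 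The one technical subtlety is justifying the renormalization step for merely bounded distributional solutions of the transport equation with Lipschitz field; this is classical (DiPerna--Lions / smooth-coefficient theory applied after observing that $r_i$, hence $b(r_i)$, is spatially Lipschitz), and is precisely where the assumption that the solutions are \emph{locally bounded} is used, together with the regularizing effect of $\eta_\ee$.

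\textbf{Main obstacle.} The genuinely delicate point is the uniqueness argument: one must turn a statement about two a priori very weak (only locally bounded, distributional) solutions into an $L^1$ Gr\"onwall inequality, which requires a commutator/renormalization estimate. The mitigating fact — and the reason the proof goes through cleanly — is that $\eta_\ee$ is smooth with compact support, so $b(u_\ee^i\ast\eta_\ee)$ is automatically Lipschitz in space with quantitative bounds depending on $\ee$; thus no genuine DiPerna--Lions machinery beyond the smooth-coefficient case is needed, and the commutator term vanishes. Making the test-function manipulation rigorous (approximating $\mathrm{sgn}$, handling the time derivative as an element of a negative Sobolev space) is routine but must be done carefully; this essentially reproduces, with minor modifications, the argument already present in the references~\cite{ACT,ColomboHertyMercier,KePl}, with the only new ingredient being that the initial datum is not assumed smooth.
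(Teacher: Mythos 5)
Your overall architecture is close to the paper's in one key respect: both arguments exploit the fact that, once $u_\ee$ is fixed, $g_\ee:=b(u_\ee\ast\eta_\ee)$ is a bounded, spatially smooth vector field, so the renormalization theory for linear continuity equations with smooth coefficients applies to any locally bounded distributional solution. Your existence argument via vanishing viscosity is a legitimate alternative to the paper's (which simply quotes the construction of~\cite[\S~2]{ACT}); it essentially reproduces the compactness argument of Proposition~\ref{p:vnltonl}, and there is no circularity because extracting a subsequential limit does not use the uniqueness you are trying to prove.

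The genuine gap is in the uniqueness step. The paper does \emph{not} run a direct $L^1$ Gr\"onwall on $w=u^1_\ee-u^2_\ee$: it uses renormalization only to upgrade any locally bounded distributional solution to the class $C^0([0,+\infty[;L^1)$ (identifying it as a Kru{\v{z}}kov solution of the linear problem~\eqref{e:kruzkov3}), and then invokes~\cite[Theorem 3.2]{KePl}, which gives uniqueness in that class, as a black box. Your direct approach stalls exactly at the point you dismiss as routine. The source term $-\div\big[u^2_\ee\,(b(r_1)-b(r_2))\big]$ is a distribution of order one: $u^2_\ee$ is merely bounded, so you can neither expand the divergence (that produces $\nabla u^2_\ee\cdot(b(r_1)-b(r_2))$, which is not a function) nor integrate by parts onto $\mathrm{sgn}(w)$ or its smooth truncations $\beta_\delta'(w)$, which are not Sobolev in $x$; mollifying $w$ first leaves you with a term $\int\beta_\delta''(w_\rho)\,\nabla w_\rho\cdot(\cdots)$ that does not pass to the limit. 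Moreover, the harmless-looking piece $u^2_\ee\,\div(b(r_1)-b(r_2))$ involves $b'(r_1)\nabla r_1-b'(r_2)\nabla r_2$, which is \emph{not} controlled by $\|w\|_{L^1}$ when $b$ is only Lipschitz. An $L^2$ energy estimate fails for the same reason; note that the contraction argument in Step~3 of the proof of Theorem~\ref{t:wp} survives precisely because the viscous dissipation $-\nu\int|\nabla(z_1-z_2)|^2$ absorbs the problematic gradient term, and that term is absent here. The standard repair --- and, in substance, what underlies~\cite{KePl} --- is Lagrangian/Dobrushin-type: use the renormalization you already have to represent each $u^i_\ee$ as the push-forward of $\bar u$ along the flow of $b(r_i)$, estimate the flows in $L^\infty$ by $\int_0^t\|b(r_1)-b(r_2)\|_{L^\infty}\,ds$ times a Lipschitz factor, run the Gr\"onwall argument in a dual-Lipschitz (flat) norm rather than in $L^1$, and close the loop using that convolution with $\eta_\ee$ converts flat-norm smallness of $u^1_\ee-u^2_\ee$ into $W^{1,\infty}$-smallness of $r_1-r_2$. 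Either carry that out or, as the paper does, reduce to the quoted uniqueness result after the regularity upgrade.
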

In the following we identify $u_\ee$ and its $L^1$ strongly continuous representative. 
The existence part of Propostion~\ref{p:exunice} is a consequence of the analysis in~\cite[\S~2]{ACT} (see also~\cite{ColomboHertyMercier,CLM}). The relatively new part is the uniqueness: indeed, in~\cite{ACT,KePl} uniqueness is established in a slightly more restrictive class, while in \cite{CLM} it is established only for nonnegative data. More precisely, in~\cite{ColomboHertyMercier} it is shown that there is a unique solution $u$, in the sense of~Kru{\v{z}}kov~\cite{Kruzkov}, of the conservation law
\begin{equation}
\label{e:kruzkov3}
\left\{
\begin{array}{ll}
  \partial_t u_{\ee } + \div \big[ u_{\ee } g_\ee \big] = 0 \\
  u_{\ee } (0, x) = \bar u(x) \\
\end{array}
\right.
\end{equation} 
provided that the function  $g_\ee$ is given by $g_\ee : = b(u_\ee \ast \eta_\ee)$ (see~\cite[Definition 2.1]{ACT}) and that the initial datum is quite regular, namely $\bar u $ has bounded total variation.
On the other hand, Proposition~\ref{p:exunice} states the uniqueness of locally bounded distributional solutions. 
\begin{proof}[Proof of Proposition~\ref{p:exunice}, uniqueness]
Let $u_\ee$ be a distributional solution of~\eqref{e:nlcpr}. Then $u_\ee$ is a distributional solution of~\eqref{e:kruzkov3}. Next, we observe that the first line of~\eqref{e:kruzkov3} is a continuity equation with a regular in space coefficient $g_\ee$. Every locally bounded distributional solution of~\eqref{e:kruzkov3} is therefore renormalized, meaning that for every $\beta \in C^1 (\R)$ we have that $\beta(u)$
is a distributional solution of 
\begin{equation}
\label{e:renormalized}
   \partial_t \big[\beta(u)  \big] +  \div \big[ \beta (u) g_\ee \big]+
   \div g_\ee \big[ \beta'(u) u - \beta (u)  \big]=0. 
\end{equation}
This is for instance an application (in a very easy case) of the DiPerna-Lions-Ambrosio theory, and we refer to~\cite{Ambrosio,DiPernaLions} for that. 
Equation~\eqref{e:renormalized} implies that, up to redefining $u_\ee$ in a negligible set, $u_\ee$ belongs to $C^0 ([0, + \infty[;L^1)$ and it is a Kru{\v{z}}kov solution of the conservation law~\eqref{e:kruzkov3}: this can be proved by arguing as in the proof of Corollary 3.14 in~\cite{DL:note}. 
Since by~\cite[Theorem 3.2]{KePl}
distributional solutions of~\eqref{e:nlcpr} in $C^0 ([0, + \infty[;L^1)$ are unique, this concludes the proof of 
Proposition~\ref{p:exunice}. 
\end{proof} 
\subsection{Proof of Proposition~\ref{p:vnltonl}}
\label{ss:proofp}
	Let $\ee >0$. We consider $u_{\ee \nu}$ satisfying~\eqref{e:cpr} and we recall the $L^\infty$ estimate in~\eqref{e:elleunoinfinito}. We fix a sequence $\nu_n$ and a function $u_\ee \in L^\infty ([0, 1] \times \R^d)$ such that 
	\begin{equation}
	\label{e:weaks}
	u_{\ee \nu_n} \weaks u_\ee \; \text{weakly$^\ast$ in $L^\infty([0, 1] \times \R^d)$ as $\nu_n \to 0^+$.}
	\end{equation}
	
 	We claim that $u_\ee$ is a distributional solution of~\eqref{e:nlcpr}. To take the limit in the distributional formulation of \eqref{e:cpr} and prove this claim, it is enough to show that 
	\begin{equation}
	\label{e:regconv}
	u_{\ee \nu_n} \ast \eta_\ee \to u_\ee \ast \eta_\ee \; \text{strongly in $L^1_{\loc}([0, 1] \times \R^d)$.}
	\end{equation}
	If the claim is true, since bounded, distributional solutions of~\eqref{e:nlcpr} are unique by  Proposition~\ref{p:exunice}, the whole family $u_{\ee \nu_n}$ converges to $u_\ee$ weakly$^\ast$ in $L^\infty([0, 1] \times \R^d)$, proving Proposition~\ref{p:vnltonl}. 
	
	To show \eqref{e:regconv} we point out first that by \eqref{e:elleunoinfinito} for every $t\in [0,1]$
	\begin{equation}
	\label{e:c01}
	\| [u_{\ee \nu} \ast \eta_\ee] (t, \cdot) \|_{L^\infty}  +	\| \nabla [u_{\ee \nu} \ast \eta_\ee ]  (t, \cdot) \|_{L^\infty} 
	\leq   \| u_{\ee \nu} (t, \cdot) \|_{L^\infty}  \| \eta_\ee  \|_{W^{1,1}}{\leq} 
	C(\| \bar u \|_{L^\infty},
	\| \bar u \|_{L^1},  \eta, L, d, \ee).
	\end{equation}
The time derivative of $u_{\ee \nu} \ast \eta_\ee$ is obtained by convolving every term in~\eqref{e:cpr} with $\eta_\ee$, that is 
	$$
	\partial_t [u_{\ee \nu} \ast \eta_\ee] (t, x) = - \div \big[\eta_\ee\ast (u_{\ee \nu} b (u_{\ee \nu})) \big]+ \nu  u_{\ee \nu} \ast \Delta \eta_\eps.	$$
	By using~\eqref{e:v}, \eqref{e:etaee} and~\eqref{e:elleunoinfinito} we conclude that 
	\begin{equation}
	\label{e:c03}
	\| \partial_t [u_{\ee \nu} \ast \eta_\ee] (t, \cdot) \|_{L^\infty } \leq 
	\|\nabla\eta_\ee \|_{L^\infty } \|u_{\ee \nu} b (u_{\ee \nu})\|_{L^1 }+ \nu \| u_{\ee \nu} \|_{L^1 }\| \Delta \eta_\eps \|_{L^\infty } \leq
	C(  \| \bar u \|_{L^\infty}, 
	\| \bar u \|_{L^1}, \eta, L, d, \ee). 
	\end{equation}
	Finally, we combine~\eqref{e:c01} and~\eqref{e:c03} and we apply the Ascoli-Arzel\`a Theorem: there is a continuous function $w$ such that, up to subsequences (that we do not re-label)  
	$u_{\ee \nu_n} \ast \eta_\ee \to w$ uniformly on compact sets 
		of $[0,1] \times \R^d$.
For any $\phi \in C^\infty_c ([0, 1] \times \R^d)$, terming $\check \eta_\ee (z):= \eta_\ee (-z)$ and by \eqref{e:weaks}, we have
	\begin{equation*}
	\begin{split}
	\int_0^1 \int_{\R^d}  \phi (t, x) w (t, x) dx dt  
	&= \lim_{n\to \infty}\int_0^1 \int_{\R^d} \phi [u_{\ee \nu_n} \ast \eta_\ee ] dx dt 
   = \lim_{n\to \infty} \int_0^1 \int_{\R^d} u_{\ee \nu_n} 
	[
	\phi \ast \check \eta_\ee ] dy dt 
	\\&= \int_0^1 \int_{\R^d} u_{\ee}
	[
	\phi \ast \check \eta_\ee ] dy dt
	=  \int_0^1 \int_{\R^d}  \phi [u_{\ee } \ast \eta_\ee ] dx dt.  
	\end{split}
	\end{equation*}
By the arbitrariness of $\phi$ we deduce that 	$w= u_\ee \ast \eta_\ee$ a.e. in $[0, 1] \times \R^d$  and  
	hence we prove \eqref{e:regconv}. 
\qed
\section{Convergence of the nonlocal viscous approximation for regular data}
\label{s:regdata}
In this section we establish the nonlocal to local limit asserted in Theorem~\ref{t:convbadata} assuming more restrictive conditions on the initial data. This intermediate result is pivotal to the proof of Theorem~\ref{t:convbadata}.
\begin{theorem}
\label{t:convreg} Fix $0<\nu<1/4$. Assume that $b$ satisfies~\eqref{e:v} and $\eta_\ee$ satisfies~\eqref{e:eta} and~\eqref{e:etaee}. Let $p$ satisfy~\eqref{e:condizionip}, let $\beta=(p+d)/(p-d)$ and 
assume that $\bar u \in L^1 \cap L^\infty \cap W^{1, p} (\R^d)$. Let $u_{\ee \nu}$ and $u_\nu$  be the solutions of the Cauchy problems~\eqref{e:cpr} and~\eqref{e:vclr}, respectively. Then there exists $C:=C(d, p, L, \| \bar u \|_{L^\infty}, \| \bar u \|_{W^{1,p}})$ such that, 
if $\ee\leq e^{-C \nu^{-\beta}}$, we have
\begin{equation}
\label{e:convreg}
        \| u_{\ee \nu} (t, \cdot) - u_\nu(t, \cdot)  \|_{L^p }
        \leq  \ee e^{C \nu^{-\beta}},
        \quad \text{for every $t \in [0, 1]$}.
\end{equation}  
This, in particular, implies that $u_{\ee \nu} \to u_\nu$ strongly in $L^{\infty}_{\mathrm{loc}}([0, + \infty[; L^p(\R^d))$ as 
$\ee \to 0^+$. 
\end{theorem}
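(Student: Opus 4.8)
\textbf{Proof plan for Theorem~\ref{t:convreg}.}
The plan is to estimate the difference $r_{\ee\nu} := u_{\ee\nu} - u_\nu$ directly in $L^p$ by writing an equation for $r_{\ee\nu}$, testing against $|r_{\ee\nu}|^{p-2} r_{\ee\nu}$, and closing a Gr\"onwall-type inequality. First I would record the uniform bounds we are allowed to use: by Theorem~\ref{t:wp} we have $\|u_{\ee\nu}(t,\cdot)\|_{L^1}\le \|\bar u\|_{L^1}$ and a bound on $\|u_{\ee\nu}(t,\cdot)\|_{L^\infty}$, and the same for $u_\nu$ (which solves the $\ee$-independent viscous conservation law~\eqref{e:vclr}). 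The crucial point, however, is that since $\bar u \in W^{1,p}$ we can also propagate Sobolev regularity: using the Duhamel representation with the heat semigroup $S^{\nu}_t$ and energy estimates one obtains a bound on $\|\nabla u_\nu(t,\cdot)\|_{L^p}$, and likewise on $\|\nabla u_{\ee\nu}(t,\cdot)\|_{L^p}$, on $[0,1]$ depending on $d,p,L,\|\bar u\|_{L^\infty},\|\bar u\|_{W^{1,p}}$ but \emph{not} on $\ee$ (this is where the exponent $\beta=(p+d)/(p-d)$ and the factor $e^{C\nu^{-\beta}}$ will enter, since the parabolic smoothing constants blow up like a power of $1/\nu$).

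Next I would subtract~\eqref{e:vclr} from~\eqref{e:cpr} to get
\[
\partial_t r_{\ee\nu} + \div\big[ r_{\ee\nu}\, b(u_{\ee\nu}\ast\eta_\ee) \big]
+ \div\big[ u_\nu\,\big( b(u_{\ee\nu}\ast\eta_\ee) - b(u_\nu) \big)\big] = \nu\Delta r_{\ee\nu}.
\]
Testing with $|r_{\ee\nu}|^{p-2} r_{\ee\nu}$ and integrating by parts, the term $\div[r_{\ee\nu} b(\cdot)]$ produces $\tfrac1p\int \div[b(u_{\ee\nu}\ast\eta_\ee)]\,|r_{\ee\nu}|^p$, controlled by $\|\div b(u_{\ee\nu}\ast\eta_\ee)\|_{L^\infty}\le C(L,d)\|\nabla\eta_\ee\|_{L^\infty}\|\bar u\|_{L^1}$ — but this constant depends on $\ee$ through $\|\nabla\eta_\ee\|_{L^\infty}$, so instead I would integrate by parts the other way (move the derivative onto $|r_{\ee\nu}|^{p-2}r_{\ee\nu}$) and absorb it using the viscous term $\nu\int |r_{\ee\nu}|^{p-2}|\nabla r_{\ee\nu}|^2$ via Young's inequality, at the cost of a factor $1/\nu$. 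The genuinely dangerous term is the third one: split $b(u_{\ee\nu}\ast\eta_\ee) - b(u_\nu) = \big[b(u_{\ee\nu}\ast\eta_\ee) - b(u_{\ee\nu})\big] + \big[b(u_{\ee\nu}) - b(u_\nu)\big]$. The second bracket is Lipschitz in $r_{\ee\nu}$ and gives a harmless $\int |u_\nu||r_{\ee\nu}|^{p-1}\cdot$(derivative) term, closable by Young against the dissipation and a $\int|r_{\ee\nu}|^p$ term. The first bracket is the \emph{consistency error}: $\|u_{\ee\nu}\ast\eta_\ee - u_{\ee\nu}\|_{L^p} \le \ee\,\|\nabla u_{\ee\nu}\|_{L^p}$ because $\eta_\ee$ is supported in a ball of radius $\ee$ and has unit mass, so this term is bounded by $\ee$ times the ($\ee$-uniform, $\nu$-dependent) gradient bound.

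Putting the pieces together yields, after dividing by $\|r_{\ee\nu}(t,\cdot)\|_{L^p}^{p-1}$ (or working with $\tfrac{d}{dt}\|r_{\ee\nu}\|_{L^p}^p$ directly), a differential inequality of the form
\[
\frac{d}{dt}\|r_{\ee\nu}(t,\cdot)\|_{L^p}^p \le \frac{C}{\nu}\,\|r_{\ee\nu}(t,\cdot)\|_{L^p}^p + C\,\ee^p\,e^{C\nu^{-\beta}},
\]
with $r_{\ee\nu}(0,\cdot)=0$; Gr\"onwall then gives $\|r_{\ee\nu}(t,\cdot)\|_{L^p}^p \le C\ee^p e^{C\nu^{-\beta}}\,e^{Ct/\nu}$, hence on $[0,1]$ the bound $\|r_{\ee\nu}(t,\cdot)\|_{L^p}\le \ee\,e^{C\nu^{-\beta}}$ after relabeling $C$. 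Imposing $\ee\le e^{-C\nu^{-\beta}}$ (which makes the right-hand side small) is only needed so that the a priori gradient estimates used in bootstrapping stay uniform; otherwise the estimate~\eqref{e:convreg} holds as stated, and letting $\ee\to 0^+$ with $\nu$ fixed gives the asserted convergence in $L^\infty_{\mathrm{loc}}([0,\infty[;L^p)$.

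\textbf{Main obstacle.} The delicate part is establishing the $\ee$-uniform bound on $\|\nabla u_{\ee\nu}(t,\cdot)\|_{L^p}$ on the whole interval $[0,1]$ with an explicit dependence $e^{C\nu^{-\beta}}$ on the viscosity — this requires a careful Duhamel/parabolic-smoothing argument for the nonlocal equation in which the smoothing gain of the heat kernel (a factor $t^{-1/2}$ in $L^p\to L^p$ for one derivative, rescaled by $\nu$) competes against the growth induced by the transport term, and tracking how the constant degenerates as $\nu\to 0$ is exactly what produces the exponent $\beta=(p+d)/(p-d)$ via the condition $p>d$ in~\eqref{e:condizionip}. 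Everything downstream of that estimate is a routine energy computation.
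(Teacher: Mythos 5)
Your plan has a genuine gap at its core: the direct $L^p$ energy estimate on $r_{\ee\nu}=u_{\ee\nu}-u_\nu$ requires $\ee$-uniform control of the advecting field. When you ``integrate by parts the other way'' and absorb via Young against the dissipation, the leftover term is $\tfrac{C}{\nu}\int |r_{\ee\nu}|^p\,|b(u_{\ee\nu}\ast\eta_\ee)|^2\,dx$, so you need $\|b(u_{\ee\nu}\ast\eta_\ee)\|_{L^\infty}\le L\|u_{\ee\nu}\|_{L^\infty}$ bounded uniformly in $\ee$ (and the alternative route needs $\|\div b(u_{\ee\nu}\ast\eta_\ee)\|_{L^\infty}$, which is even worse). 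But the only $L^\infty$ bound available for the \emph{nonlocal} viscous solution is~\eqref{e:elleunoinfinito}, whose constant degenerates like $\exp(C\|\bar u\|_{L^1}\|\nabla\eta_\ee\|_{L^\infty})\sim\exp(C\ee^{-d-1})$: unlike the local equation~\eqref{e:vclr}, equation~\eqref{e:cpr} has no maximum principle uniform in $\ee$. The same problem afflicts your consistency error: you bound it by $\ee\|\nabla u_{\ee\nu}\|_{L^p}$ and assert that a gradient bound for $u_{\ee\nu}$ follows ``likewise'' from finite differences, but the requisite $L^p$-stability estimate for the nonlocal equation with $\ee$-independent constants is exactly the kind of estimate that is not established (and would again need the uniform $L^\infty$ bound). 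The second issue is fixable by rearranging the decomposition so that the convolution error falls on the local solution, writing $u_{\ee\nu}\ast\eta_\ee-u_\nu=r_{\ee\nu}\ast\eta_\ee+(u_\nu\ast\eta_\ee-u_\nu)$, estimating the first piece by Young's inequality and the second by $\ee\,e^{C\nu^{-1}}\|\nabla\bar u\|_{L^p}$ via~\eqref{e:ellepdx}; but the first issue is structural to the energy method.

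The paper avoids both problems by working with the Duhamel (mild) formulation for $z_\ee=u_{\ee\nu}-u_\nu$: the nonlinear flux $\mathcal T_\ee$ in~\eqref{e:cosaet} is estimated in $L^{p/2}$ by H\"older, so only $L^p$ norms of $z_\ee$ and $u_\nu$ enter (no $L^\infty$ or gradient bound on $u_{\ee\nu}$ at all), and the derivative is paid for by $\|\nabla G_\nu(t,\cdot)\|_{L^{p/(p-1)}}\sim(\nu t)^{\alpha}$ with $\alpha=-(d+p)/(2p)$, which is integrable in time precisely because $p>d$ in~\eqref{e:condizionip}. The price is that the resulting fixed-point/continuity argument (Lemma~\ref{l:zeta}) closes only on time intervals of length $\tau_0\nu^{\beta}$ with $\beta=-\alpha/(1+\alpha)=(p+d)/(p-d)$, and iterating over $\sim\nu^{-\beta}$ such intervals, with the $L^p$ norm at worst quadrupling each time, is what produces the factor $e^{C\nu^{-\beta}}$. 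So your attribution of $\beta$ to the gradient bound is off: the gradient bound~\eqref{e:ellepdx} only costs $e^{C\nu^{-1}}$, and $\beta$ comes from the iteration count. Relatedly, your claimed Gr\"onwall output $e^{Ct/\nu}$ would yield a strictly better constant than the theorem asserts, which should itself have been a warning that some $\ee$- or $\nu$-dependence was being swept under the rug.
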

To establish Theorem~\ref{t:convreg} we introduce the function 
\begin{equation}
\label{e:zetaee}
  z_\ee : = u_{\ee \nu }- u_\nu. 
\end{equation}
Note that, to simplify the notation, we do not explicitly indicate the dependence of $z_\ee$ on $\nu$. Next, we compute the equation satisfied by $z_\ee$ and we perform careful a-priori estimates by extensively using the Duhamel representation formula. 

The proof of Theorem~\ref{t:convreg} is organized as follows: in~\S~\ref{ss:prel} we review some basic results concerning viscous conservation laws. In~\S~\ref{ss:proofteo} we provide the proof of Theorem~\ref{t:convreg} by establishing precise a-priori 
estimates on the growth rate of the function $z_\ee$ in~\eqref{e:zetaee}.

\subsection{Preliminary results}
\label{ss:prel}
In~\S~\ref{sss:heat} we recall some basic results about heat kernels, in~\S~\ref{sss:viscous} we go over some a-priori estimates on solutions of viscous conservation laws that we need in the following. 
\subsubsection{Heat kernels}
\label{sss:heat}
We recall some basic properties of the heat kernel $G: ]0, + \infty[ \times \R^d \to \R$
\begin{equation}
\label{e:G}
    G(t, x) : = C(d) \frac{1}{ t^{d/2}} \exp \left( - \frac{|x|^2}{ 4 t}\right).
\end{equation}
The normalization constant $C(d)$ is chosen in such a way that 
$ \| G (t, \cdot) \|_{L^1 (\R^d)} =1,$ for every $t>0$. Since 
$$
    | \nabla G (t, x) | = C(d) \frac{|x|}{t^{d/2+1}}  \exp \left( - \frac{|x|^2}{ 4 t}\right),
$$
by using spherical coordinates and by making the change of variables $\rho'= \rho /2 t^{1/2} $ we get 
\begin{equation}
\label{e:nablaG}
\begin{split}
    \| \nabla G (t, \cdot) \|_{L^{q} (\R^d)} & = 
   C (d, q) \left( \int_0^\infty 
   \rho^{d-1} \frac{\rho^{q}}{t^{q (d/2+1)}} e^{-\frac{q \rho^2}{4t}} d\rho 
   \right)^{1/q} 
  =
   C(d, q) t^{\frac{d- q (d+1)}{2q}} = C(d,q) t^\alpha. 
\end{split}
\end{equation}
For later use we have set 
\begin{equation}
\label{e:alpha} \alpha: = \frac{d- q (d+1)}{2q} 
\end{equation}
in the formula above. Given $\nu>0$, we introduce the kernel 
\begin{equation}
\label{e:Gnu}
           G_\nu (t, x) : =G \left( {\nu}{t}, {x} \right)= \frac{1}{\nu^d} G \left( \frac{t}{\nu}, \frac{x}{\nu} \right),
\end{equation}
which is the fundamental solution of the equation $\partial_t u = \nu \Delta u$
and satisfies
\begin{equation}
\label{e:G1nu}
     \| G_\nu (t, \cdot) \|_{L^1} = 1, \qquad
    \| \nabla G_\nu (t, \cdot) \|_{L^q} = 
    \| \nabla G (\nu t, \cdot) \|_{L^q} \stackrel{}{=}
     C(d, q)(\nu t)^\alpha. 
\end{equation}

\subsubsection{A priori estimates on solutions of a viscous conservation law}
\label{sss:viscous}
The following lemma collects some classical a-priori estimates we need in the following. 
\begin{lemma}
\label{l:vcl}
Let $\nu \in (0,1)$. Assume $b$ satisfies~\eqref{e:v}, $\bar u \in L^1 (\R^d) \cap L^\infty (\R^d)$. The  solution of the Cauchy problem~\eqref{e:vclr} satisfies:
\begin{equation}
\label{e:controlp}
    \| u_\nu (t, \cdot) \|_{L^p }
    \leq \| \bar u \|_{L^p }, 
   \quad \text{for every  $t \in [0, 1]$ and every $p \in [1, + \infty]$.}
\end{equation}
Let $u_\nu$ and $w_\nu$ be the two solutions corresponding to the data $\bar u$ and $\bar w$, respectively.

Then we have the following stability estimate: for every $p \in [1, + \infty]$,
\begin{equation}
\label{e:stability}
      \| u_\nu (t, \cdot) - w_\nu (t, \cdot) \|_{L^p }
      \leq e^{C(d,p, L,  \| \bar u \|_{L^\infty}, 
      	\| \bar w \|_{L^\infty} )\nu^{-1}} 
      \| \bar u - \bar w \|_{L^p},
      \quad \text{for every $t \in [0, 1]$}.
\end{equation}
If we also require $\bar u \in  W^{1, p} (\R^d)$, then we have 
\begin{equation}
\label{e:ellepdx}
      \| \nabla u_\nu (t, \cdot) \|_{L^p} \leq e^{
      C(d,p, L, \|\bar u \|_{L^\infty})\nu^{-1}} \
      \| \nabla \bar u \|_{L^p}, 
      \quad \text{for every $t \in [0, 1]$}.
\end{equation} 
\end{lemma}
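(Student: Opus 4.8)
\textbf{Proof proposal for Lemma~\ref{l:vcl}.}
The plan is to derive all three estimates from standard parabolic techniques: energy/renormalization estimates for the $L^p$ bounds, a Duhamel argument for the stability estimate, and differentiation of the equation for the gradient bound. For \eqref{e:controlp}, I would multiply the equation in \eqref{e:vclr} by $|u_\nu|^{p-2} u_\nu$ (for $p<\infty$), integrate over $\R^d$, and observe that the flux term $\div[u_\nu b(u_\nu)]$ tested against $|u_\nu|^{p-2}u_\nu$ produces, after integration by parts, an integral of the form $\int \Phi'(u_\nu)\cdot\nabla u_\nu\,dx$ for a suitable vector field $\Phi$, hence a divergence which integrates to zero; the viscous term gives $-\nu(p-1)\int |u_\nu|^{p-2}|\nabla u_\nu|^2\,dx\le 0$. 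Thus $\frac{d}{dt}\|u_\nu(t,\cdot)\|_{L^p}^p\le 0$, which gives \eqref{e:controlp} for $p<\infty$; the case $p=\infty$ follows by letting $p\to\infty$, or alternatively from the maximum principle as in \cite[Lemma 3.4]{ProcHyp2012} using that $\div[b(u_\nu)]$ need not be controlled here because the equation is genuinely local (one can also invoke \eqref{e:linftyaux2} type arguments). The case $p=1$ is handled by approximating $\beta(u)=|u|$ exactly as in Step~2 of the proof of Theorem~\ref{t:wp}.

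For the stability estimate \eqref{e:stability}, I would set $z:=u_\nu-w_\nu$, which satisfies
$$
\partial_t z + \div\big[z\,b(u_\nu)\big] + \div\big[w_\nu(b(u_\nu)-b(w_\nu))\big] = \nu\Delta z,
$$
with $z(0,\cdot)=\bar u-\bar w$. Testing against $|z|^{p-2}z$ (for $p<\infty$), the first divergence term again contributes a controlled commutator: integrating by parts it becomes $-\int b(u_\nu)\cdot\nabla(|z|^{p-2}z)\,dx = -\frac{p-1}{p}\int b(u_\nu)\cdot\nabla|z|^p\,dx = \frac{p-1}{p}\int \div[b(u_\nu)]\,|z|^p\,dx$, but $\div[b(u_\nu)]$ is not a priori bounded. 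So instead I would keep $b(u_\nu)\cdot\nabla(|z|^{p-2}z)$ and the cross term $w_\nu(b(u_\nu)-b(w_\nu))\cdot\nabla(|z|^{p-2}z)$ together, bound $|b(u_\nu)-b(w_\nu)|\le L|z|$ by \eqref{e:v}, and absorb the resulting $\int |\nabla(|z|^{p-2}z)|\,|z|\cdot(\text{bounded})\,dx$ against the good viscous term $\nu\,c_p\int |z|^{p-2}|\nabla z|^2\,dx$ via Young's inequality, paying a factor $\nu^{-1}$ and a constant depending on $L,\|\bar u\|_{L^\infty},\|\bar w\|_{L^\infty},p,d$. This yields $\frac{d}{dt}\|z(t,\cdot)\|_{L^p}^p \le C\nu^{-1}\|z(t,\cdot)\|_{L^p}^p$, and Gr\"onwall gives \eqref{e:stability} for $p<\infty$; the case $p=\infty$ follows by the maximum principle (comparison) or by sending $p\to\infty$ in the already-established bound noting the constant stays bounded. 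Alternatively, one may run the Duhamel representation $z(t)=G_\nu(t)\ast(\bar u-\bar w) - \int_0^t \nabla G_\nu(t-s)\ast\big[z\,b(u_\nu)+w_\nu(b(u_\nu)-b(w_\nu))\big]\,ds$, use \eqref{e:G1nu} and Young's convolution inequality, and close a Gr\"onwall loop — but the resulting time singularity $(\nu(t-s))^\alpha$ with $\alpha=-1/2$ for $q=1$ is integrable, giving the $e^{C\nu^{-1}}$ factor after the standard singular-Gr\"onwall iteration.

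For \eqref{e:ellepdx}, differentiate \eqref{e:vclr} in $x_k$: writing $v:=\partial_{x_k}u_\nu$ we get
$$
\partial_t v + \div\big[v\,b(u_\nu)\big] + \div\big[u_\nu\, b'(u_\nu)\,v\big] = \nu\Delta v,
$$
where $b'(u_\nu)v = \partial_{x_k}[b(u_\nu)]$. Testing with $|v|^{p-2}v$, summing over $k$, and treating the two first-order terms exactly as in the stability estimate — using $|b'|\le L$ from \eqref{e:v}, the uniform bound $\|u_\nu(t,\cdot)\|_{L^\infty}\le\|\bar u\|_{L^\infty}$ from \eqref{e:controlp}, and absorbing against $\nu\,c_p\int |\nabla v|^2|v|^{p-2}$ — produces $\frac{d}{dt}\|\nabla u_\nu(t,\cdot)\|_{L^p}^p \le C\nu^{-1}\|\nabla u_\nu(t,\cdot)\|_{L^p}^p$ and hence \eqref{e:ellepdx} by Gr\"onwall. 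The main obstacle in all three parts is the same: the flux divergence $\div[b(u_\nu)]$ is not a priori controlled, so one cannot simply apply Gr\"onwall after integration by parts; the fix is to never isolate that term but rather keep it paired with a gradient that can be absorbed by the dissipation, at the cost of the $\nu^{-1}$ factor — which is precisely the exponential blow-up recorded in \eqref{e:stability} and \eqref{e:ellepdx} and which ultimately forces the exponentially small coupling $\ee\le e^{-C\nu^{-\beta}}$ in Theorem~\ref{t:convreg}. Rigorously, one should first carry out these computations for smooth approximate solutions (justified by the parabolic regularity from Theorem~\ref{t:wp} / \cite[\S~7]{Evans}) and then pass to the limit.
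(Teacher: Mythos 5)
Your proofs of \eqref{e:controlp} and \eqref{e:stability} follow essentially the same route as the paper: the quasi-linear rewriting with a flux $h$ satisfying $h'=f'\beta'$ so that the transport term integrates to zero for \eqref{e:controlp}, and for \eqref{e:stability} the difference equation tested against $|z|^{p-2}z$, with the key point (which you identify correctly) that $\div[b(u_\nu)]$ must never be isolated but instead kept paired with a gradient and absorbed into the dissipation via Young's inequality at the price of a $\nu^{-1}$. For \eqref{e:ellepdx}, however, you take a genuinely different route: you differentiate the equation in $x_k$ and run the same energy argument on $v=\partial_{x_k}u_\nu$, whereas the paper does not differentiate the PDE at all --- it uses the finite-difference characterization of $W^{1,p}$, namely $\|\nabla u_\nu(t,\cdot)\|_{L^p}\leq C\sup_{h\neq 0}|h|^{-1}\|u_\nu(t,\cdot)-u_\nu(t,\cdot+h)\|_{L^p}$, and then applies the already-established stability estimate \eqref{e:stability} to the pair of data $\bar u$ and $\bar u(\cdot+h)$, exploiting the translation invariance of the equation. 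Your route is correct in substance and the equation for $v$ is the right one, but it requires justifying the differentiation (and the integrations by parts on $v$), which forces the smooth-approximation-and-passage-to-the-limit step you flag at the end; the paper's route buys exactly the avoidance of that regularization, since it only ever manipulates solutions at the level already covered by \eqref{e:stability}. One small caveat: your claim that the constant in \eqref{e:stability} ``stays bounded'' as $p\to\infty$ is not evident from the energy computation (the constant produced by the Young absorption grows with $p$), so for $p=\infty$ you should rely on the comparison/maximum-principle argument rather than on a limit $p\to\infty$.
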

\begin{remark}
\label{r2}
Note that~\cite[Lemma~6.3.3]{Dafermos:book} implies that the function $u_\nu$ (which a priori is only defined for a.e.~$(t, x)$) has a representative such that the function $t \mapsto u(t, \cdot)$ is continuous from $[0, 1]$ to $L^1$ endowed with the strong topology. Here and in the following, we always identify $u_\nu$ and its $L^1$-continuous representative. 
\end{remark}    
\begin{proof}[Proof of Lemma~\ref{l:vcl}]
When $p=\infty$, the estimate \eqref{e:controlp} is a maximum principle, which is a classical result~\cite[\S~VI]{Dafermos:book}. The result for $p\in [1, + \infty[$ is also classical, but for the sake of completeness we provide a sketch of the proof. We rewrite the equation at the first line of~\eqref{e:vclr} in the quasi-linear form 
\begin{equation*}
    \partial_t u_\nu + f'(u_\nu) \div u_\nu = \nu \Delta u_\nu,
\qquad \mbox{where} \qquad f(u) : = u b(u). 
\end{equation*}
We set $\beta (u) : = |u|^p$ and we multiply the above equation by $\beta' (u_\nu)$. We arrive at 
$$
     \partial_t \big[ \beta(u_\nu) \big] + f'(u_\nu) \beta' (u_\nu) \div u_\nu =\nu \Delta 
     \big[\beta( u_\nu) \big] - \nu \beta''(u_\nu) |\nabla u_\nu|^2. 
$$ 
We fix a function $h: \R \to \R^d$ satisfying $h' = f' \beta'$ and we rewrite the above equation as 
$$
     \partial_t \big[ \beta(u_\nu) \big] + \div  \big[ h(u_\nu) \big] = \nu\Delta 
     \big[\beta( u_\nu) \big] - \nu \beta''(u_\nu) |\nabla u_\nu|^2. 
$$
Next, we integrate with respect to $x$ and use the convexity of the function $\beta$: we get 
$$
    \frac{d}{dt} \int_{\R^d} \beta (u_\nu) dx \leq 0, 
$$ 
which implies~\eqref{e:controlp}.
To prove~\eqref{e:stability}, we take the difference between the equation \eqref{e:vclr} for $u_\nu$ and $w_\nu$
$$    \partial_t (u_\nu- w_\nu) + \div \big((u_\nu-w_\nu)b(u_\nu)+w_\nu (b(u_\nu) - b(w_\nu))\big) = \nu \Delta (u_\nu- w_\nu).
$$
Multiplying by $p(u_\nu- w_\nu)|u_\nu- w_\nu|^{p-2}$ the previous equation and integrating in space we have
$$ \partial_t \|u_\nu- w_\nu\|_{L^p}^p = \int_{\R^d}p(u_\nu- w_\nu)|u_\nu- w_\nu|^{p-2} \Big[-\div \big((u_\nu-w_\nu)b(u_\nu)+w_\nu (b(u_\nu) - b(w_\nu))\big) + \nu \Delta (u_\nu- w_\nu)\Big] .
$$
Integrating by parts, using assumptions \eqref{e:v} on $b$ and \eqref{e:controlp} with $p=\infty$ we get
\begin{equation*}
\begin{split}
\partial_t \|u_\nu- w_\nu\|_{L^p}^p & \leq C(p) \int_{\R^d} |\nabla (u_\nu- w_\nu)| |u_\nu-w_\nu|^{p-1} |b(u_\nu)|+|u_\nu-w_\nu|^{p-2} |\nabla (u_\nu- w_\nu)| |w_\nu| |b(u_\nu) - b(w_\nu)| 
\\& \qquad \quad \qquad- \nu \int_{\R^d}  |\nabla (u_\nu- w_\nu)|^2 |u_\nu- w_\nu|^{p-2}
\\& \leq C_0 \int_{\R^d} |\nabla (u_\nu- w_\nu)| |u_\nu-w_\nu|^{p-1} - \nu \int_{\R^d}  |\nabla (u_\nu- w_\nu)|^2 |u_\nu- w_\nu|^{p-2},
\end{split}
\end{equation*}
for a suitable constant $C_0$. By Young inequality we have
$$\int_{\R^d} |\nabla (u_\nu- w_\nu)| |u_\nu-w_\nu|^{p-1}
\leq \frac{\nu}{2C_0} \int_{\R^d} |\nabla (u_\nu- w_\nu)|^2 |u_\nu- w_\nu|^{p-2}+ \frac{C_0}{2\nu} \int_{\R^d} |u_\nu-w_\nu|^{p},
$$
which implies 
$$ \partial_t \|u_\nu- w_\nu\|_{L^p}^p \leq {C}{\nu^{-1}} \|u_\nu- w_\nu\|_{L^p}^p. $$
The Gr\"onwall lemma allows to conclude the validity of \eqref{e:stability}.

By the characterization of Sobolev functions in terms of finite differences (notice that for $p=1$ it would involve functions of bounded variation, but we know a priori that $\nabla u_\nu \in L^{2}$ for every $t\in[0,1]$), we have
$$  \| \nabla u_\nu (t, \cdot) \|_{L^p} \leq C \sup_{h\in \R^d \setminus \{0\}}\frac{1}{|h|} \| u_\nu (t, \cdot) - u_\nu (t, \cdot+h) \|_{L^p}, 
\quad \text{for every $t \in [0, 1]$.}
$$ 
Applying the stability \eqref{e:stability} to $\bar u$ and $ \bar u(\cdot +h)$ we estimate the right-hand side
$$ \| \nabla u_\nu (t, \cdot) \|_{L^p} \leq e^{
	C(d, L, \|\bar u \|_{L^\infty})\nu^{-1}}
 \sup_{h\in \R^d \setminus \{0\}}\frac{1}{|h|} \| \bar u(\cdot) - \bar u (\cdot+h) \|_{L^p} 
\leq e^{
	C(d, L, \|\bar u \|_{L^\infty})\nu^{-1}} \| \nabla \bar u \|_{L^p}$$ 
for every $t \in [0, 1]$. This proves \eqref{e:ellepdx}.
\end{proof}
\begin{lemma}
\label{l:wconvolution}
Assume that $b$ satisfies~\eqref{e:v} and that $u_\nu$ 
satisfies~\eqref{e:ellepdx}. Assume furthermore that the convolution kernel $\eta_\ee$ satisfies~\eqref{e:eta} and~\eqref{e:etaee}. Then for every $p \in [1, + \infty[$
we  have
\begin{equation}
\label{e:elleduee}
  \| u_\nu(t, \cdot)  - \eta_\ee \ast u_\nu (t, \cdot) \|_{L^p}
  \leq  
  \ee  e^{C(d,p, L, \|\bar u \|_{L^\infty})\nu^{-1}}
         \| \nabla \bar u \|_{L^p}, 
         \quad \text{for every $t \in [0, 1]$}.  
\end{equation}
\end{lemma}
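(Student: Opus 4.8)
The plan is to decouple the statement into two independent facts: a purely \emph{static} mollification estimate, valid for any $W^{1,p}$ function, and the \emph{dynamic} bound \eqref{e:ellepdx} on $\|\nabla u_\nu(t,\cdot)\|_{L^p}$ already at our disposal. Concretely, I would first prove that for every $g \in W^{1,p}(\R^d)$, $p \in [1, +\infty[$, and every $0 < \ee \leq 1$ one has
\begin{equation*}
\| g - \eta_\ee \ast g \|_{L^p} \leq \ee \, \| \nabla g \|_{L^p},
\end{equation*}
and then simply apply this with $g = u_\nu(t, \cdot)$, which belongs to $W^{1,p}(\R^d)$ for every $t \in [0,1]$ with the quantitative control \eqref{e:ellepdx}. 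Inserting \eqref{e:ellepdx} immediately yields \eqref{e:elleduee} with the same constant $C(d,p,L,\|\bar u\|_{L^\infty})$.

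For the static estimate, recall that $\int_{\R^d} \eta_\ee(y)\,dy = 1$ and that $\eta_\ee$ is supported in $\{|y| \leq \ee\}$ by \eqref{e:eta}--\eqref{e:etaee}. For $g$ smooth (the general case following by density in $W^{1,p}$, or by the finite-difference characterisation used in the proof of Lemma~\ref{l:vcl}) I would write
\begin{equation*}
g(x) - (\eta_\ee \ast g)(x) = \int_{\R^d} \eta_\ee(y) \big( g(x) - g(x-y) \big)\,dy = \int_{\R^d} \eta_\ee(y) \int_0^1 \nabla g(x - sy) \cdot y \,ds\,dy,
\end{equation*}
take the $L^p_x$ norm, and apply Minkowski's integral inequality together with the translation invariance of the Lebesgue measure and the bound $|y| \leq \ee$ on $\mathrm{supp}\,\eta_\ee$:
\begin{equation*}
\| g - \eta_\ee \ast g \|_{L^p} \leq \int_{\R^d} \eta_\ee(y)\, |y| \, \| \nabla g \|_{L^p}\,dy \leq \ee \, \| \nabla g \|_{L^p}.
\end{equation*}
Alternatively, one writes $\eta_\ee \ast g - g$ as the average over $|y|\le\ee$ of the translates $g(\cdot - y) - g(\cdot)$ and bounds each by $\ee \sup_{h \neq 0} |h|^{-1}\| g(\cdot+h) - g(\cdot)\|_{L^p} \le \ee\|\nabla g\|_{L^p}$, which keeps the argument consistent with the tools already introduced.

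There is essentially no genuine obstacle here; the only hypothesis-dependent point is that $u_\nu(t,\cdot)$ is indeed a Sobolev function with the exponential-in-$\nu^{-1}$ gradient bound, which is exactly the assumption "$u_\nu$ satisfies \eqref{e:ellepdx}" in the statement. Combining the static estimate with \eqref{e:ellepdx} applied at time $t$ concludes the proof of \eqref{e:elleduee}.
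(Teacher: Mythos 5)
Your proposal is correct and follows essentially the same route as the paper: both reduce \eqref{e:elleduee} to the standard mollification estimate $\|g-\eta_\ee\ast g\|_{L^p}\leq \ee\|\nabla g\|_{L^p}$ (the paper via Jensen's inequality for the probability measure $\eta_\ee\,dy$ plus the finite-difference characterization of Sobolev functions, you via Minkowski's integral inequality, which is an equivalent computation) and then insert the gradient bound \eqref{e:ellepdx}. No gaps.
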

\begin{proof}
	By Jensen's inequality applied with respect to the probability measure $\eta_\ee\, dx$ and by the finite differences characterization of Sobolev functions, we get 
		\begin{equation}
		\label{e:elledueallaseconda}
		\begin{split}
		\| u_\nu(t, \cdot)  - \eta_\ee \ast u_\nu (t, \cdot) \|^p_{L^p} & 
		\stackrel{\eqref{e:eta},\eqref{e:etaee}}{=}
		\int_{\R^d} \left| 
		\int_{\R^d}
		\big[ u_\nu(t, x-y) - u_\nu(t, x) \big] \eta_\ee (y) dy \right|^p dx \\
			& \;\;\;\;\; \leq
	\int_{\R^d}  \int_{\R^d}  
	   \big| u_\nu(t, x-y) - u_\nu(t, x) \big|^p \eta_\ee (y)
		dy\, dx \\
		& \;\;\;\;\; \leq
		 \| \nabla u_\nu (t, \cdot) \|^p_{L^p(\R^d) }
		\int_{\R^d} \eta_\ee (y)
		| y|^p dy \\
		& \quad\stackrel{\eqref{e:ellepdx}}{\leq}
		\ee^{p} e^{C(d,p, L, \|\bar u \|_{L^\infty})\nu^{-1}}
		\| \nabla \bar u \|^p_{L^p(\R^d) } 
		\int_{\R^d}  \eta_\ee (y)  \Big(\frac{|y|}{\ee}\Big)^p dy. 
		\end{split}        
		\end{equation}
Since $\eta_\ee$ is supported where $|y|\leq \ee$, the last integrand in the right-hand side is estimated by $\|\eta_\ee\|_{L^1}=1$, which concludes the proof of~\eqref{e:elleduee}.  
	\end{proof}

\subsection{Proof of Theorem~\ref{t:convreg}}
\label{ss:proofteo}
First, we recall that $u_{\ee \nu}$ is the solution of~\eqref{e:cpr} and $u_\nu$ is the solution of~\eqref{e:vclr} and we define $z_\ee$ as in~\eqref{e:zetaee}. 
Note that $z_\ee$ satisfies the equation 
\begin{equation}
\label{e:eqzeta}
  \partial_t z_\ee + \div \mathcal T_\ee  = \nu \Delta z_\ee
\end{equation}
where, thanks to \eqref{e:zetaee}, the term $\mathcal T_\ee$ is given by 
\begin{equation}
\label{e:cosaet}
\begin{split}
  \mathcal T_\ee :=& \; u_{\ee \nu } b(u_{\ee \nu } \ast \eta_\ee) - u_\nu b(u_\nu) =
  u_{\ee \nu } \big[  b(u_{\ee \nu } \ast \eta_\ee) - b(u_\nu) \big] + [u_{\ee \nu }-  u_\nu] b(u_\nu) \\
  =& \;
  [z_\ee + u_\nu] \big[  b\big([z_\ee + u_\nu]  \ast \eta_\ee \big) - b(u_\nu) \big] + z_\ee b(u_\nu). 
\end{split}
\end{equation}
We now proceed as follows: in~\S~\ref{sss:apriori} we establish some a-priori estimates on $z_\ee$, which are the key point in the proof, and in~\S~\ref{sss:concludo} we conclude the proof of Theorem~\ref{t:convreg}. 
\subsubsection{A-priori estimates on $z_\ee$}
\label{sss:apriori}
We establish a-priori estimates on the solution of the Cauchy problem
\begin{equation}
\label{e:cpzeta}
\left\{
\begin{array}{ll}
         \partial_t z_\ee + \div \mathcal T_\ee  = \nu \Delta z_\ee \\
         z_\ee (0, x) = z_0 (x). \\
\end{array}
\right.
\end{equation}

\begin{lemma}
\label{l:zeta}
Let $b$ satisfy~\eqref{e:v}, $0 < \ee, \nu \leq{1}/{4}$,  $\eta_\ee$ as in~\eqref{e:eta} and~\eqref{e:etaee},  $z_0 \in L^p (\R^d)$ with $p$ as in~\eqref{e:condizionip}, and $\beta = (p+d)/(p-d)$. Assume furthermore that  the function $u_\nu$ satisfies~\eqref{e:controlp} and~\eqref{e:elleduee}.
Then there exist ${c_0:= c_0 (d, p, L, \| \bar u \|_{L^\infty})>0}$ and $\tau_0:=\tau_0(d, p, L, \| \bar u \|_{L^\infty}, \| \bar u \|_{W^{1,p}})>0$, such that if
\begin{equation}\label{hp:eps-z0}
		\| z_0 \|_{L^p} \leq {1}/{4}, \qquad \ee \leq e^{-c_0\nu^{-1}}/4
\end{equation}
 the solution of the Cauchy problem~\eqref{e:cpzeta} starting from $z_0$
satisfies 
\begin{equation}
\label{e:ansatz}
    \| z_\ee (t, \cdot) \|_{L^p } \leq  2 [\| z_0 \|_{L^p} + e^{c_0\nu^{-1}}\ee],  \quad \text{for every $t \in [0, \tau_0 \nu^\beta]$.}
\end{equation}
\end{lemma}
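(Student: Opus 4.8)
\medskip

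The plan is to run a continuity (bootstrap) argument on the interval $[0,\tau_0\nu^\beta]$, using the Duhamel representation for~\eqref{e:cpzeta} together with the kernel estimates~\eqref{e:G1nu}. Write the solution of~\eqref{e:cpzeta} as
\[
z_\ee(t,\cdot) = G_\nu(t,\cdot)\ast z_0 - \int_0^t \nabla G_\nu(t-s,\cdot)\ast \mathcal T_\ee(s,\cdot)\,ds,
\]
so that, taking $L^p$ norms and using $\|G_\nu(t,\cdot)\ast z_0\|_{L^p}\le\|z_0\|_{L^p}$ (Young) and $\|\nabla G_\nu(t-s,\cdot)\|_{L^1}=C(d)(\nu(t-s))^{\alpha}$ with $\alpha=-1/2$ (the value $q=1$ in~\eqref{e:alpha}), we get
\[
\|z_\ee(t,\cdot)\|_{L^p} \le \|z_0\|_{L^p} + C(d)\int_0^t (\nu(t-s))^{-1/2}\,\|\mathcal T_\ee(s,\cdot)\|_{L^p}\,ds.
\]
The point is that the singularity $(t-s)^{-1/2}$ is integrable, and integrating it over an interval of length $\tau_0\nu^\beta$ produces a factor $\sim \nu^{-1/2}(\tau_0\nu^\beta)^{1/2}=\tau_0^{1/2}\nu^{(\beta-1)/2}$, which is small when $\tau_0$ is small; this is exactly what closes the estimate.

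\medskip

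The heart of the matter is bounding $\|\mathcal T_\ee(s,\cdot)\|_{L^p}$. From~\eqref{e:cosaet},
\[
\mathcal T_\ee = [z_\ee+u_\nu]\bigl[b([z_\ee+u_\nu]\ast\eta_\ee)-b(u_\nu)\bigr] + z_\ee\, b(u_\nu).
\]
Using the Lipschitz bound~\eqref{e:v} on $b$ and $b(0)=0$, together with Young's inequality for the convolution ($\|f\ast\eta_\ee\|_{L^p}\le\|f\|_{L^p}$ since $\|\eta_\ee\|_{L^1}=1$), one splits the first bracket as
\[
b([z_\ee+u_\nu]\ast\eta_\ee)-b(u_\nu)
= \bigl[b([z_\ee+u_\nu]\ast\eta_\ee)-b(u_\nu\ast\eta_\ee)\bigr] + \bigl[b(u_\nu\ast\eta_\ee)-b(u_\nu)\bigr],
\]
the first term controlled in $L^\infty$ (or $L^p$) by $L\|z_\ee\ast\eta_\ee\|\le L\|z_\ee\|$, the second controlled by $L\|u_\nu\ast\eta_\ee-u_\nu\|_{L^p}\le L\ee\,e^{C\nu^{-1}}\|\nabla\bar u\|_{L^p}$ via Lemma~\ref{l:wconvolution}/\eqref{e:elleduee}. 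The prefactor $[z_\ee+u_\nu]$ is handled in $L^\infty$ using $\|u_\nu\|_{L^\infty}\le\|\bar u\|_{L^\infty}$ from~\eqref{e:controlp} and an a-priori $L^\infty$ bound on $z_\ee$ from Theorem~\ref{t:wp}; however, to keep everything in $L^p$ one rather pairs one factor in $L^\infty$ with the other in $L^p$, e.g.\ $\|z_\ee b(u_\nu)\|_{L^p}\le L\|u_\nu\|_{L^\infty}\|z_\ee\|_{L^p}$. Collecting terms, one arrives at a bound of the shape
\[
\|\mathcal T_\ee(s,\cdot)\|_{L^p} \le C\bigl(d,p,L,\|\bar u\|_{L^\infty}\bigr)\,\Bigl(\|z_\ee(s,\cdot)\|_{L^p} + \|z_\ee(s,\cdot)\|_{L^p}^2 + \ee\, e^{C\nu^{-1}}\|\nabla\bar u\|_{L^p}\Bigr),
\]
where the quadratic term comes from the product $z_\ee\cdot[b(\cdot)-b(\cdot)]$ contributing $z_\ee\cdot(L\|z_\ee\ast\eta_\ee\|)$; on the bootstrap set where $\|z_\ee\|_{L^p}\le1$ the quadratic term is absorbed into the linear one.

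\medskip

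Now close the loop. Define $\tau^\ast$ to be the supremum of times $t\le\tau_0\nu^\beta$ for which $\|z_\ee(s,\cdot)\|_{L^p}\le 2[\|z_0\|_{L^p}+e^{c_0\nu^{-1}}\ee]\le 1$ for all $s\le t$ (the last inequality holds by~\eqref{hp:eps-z0} provided $c_0\ge C$ and $\tau_0$ is chosen afterwards); $\tau^\ast>0$ by continuity of $t\mapsto z_\ee(t,\cdot)$ in $L^p$ (Remark~\ref{r}) and $z_\ee(0,\cdot)=z_0$. On $[0,\tau^\ast]$ the Duhamel inequality combined with the bound on $\mathcal T_\ee$ gives, for $t\le\tau_0\nu^\beta$,
\[
\|z_\ee(t,\cdot)\|_{L^p} \le \|z_0\|_{L^p} + C\,\nu^{-1/2}t^{1/2}\cdot\Bigl(\sup_{s\le t}\|z_\ee(s,\cdot)\|_{L^p} + \ee\,e^{C\nu^{-1}}\|\nabla\bar u\|_{L^p}\Bigr),
\]
and since $\nu^{-1/2}(\tau_0\nu^\beta)^{1/2}=\tau_0^{1/2}\nu^{(\beta-1)/2}\le\tau_0^{1/2}$ (as $\beta\ge1$, $\nu\le1$), choosing $\tau_0$ small so that $C\tau_0^{1/2}\le 1/2$ yields
\[
\sup_{s\le t}\|z_\ee(s,\cdot)\|_{L^p} \le 2\|z_0\|_{L^p} + \tfrac12\,\ee\,e^{C\nu^{-1}}\|\nabla\bar u\|_{L^p} \le 2[\|z_0\|_{L^p}+e^{c_0\nu^{-1}}\ee],
\]
once $c_0$ is picked with $c_0\ge C$ and absorbing $\|\nabla\bar u\|_{L^p}$; in particular the bound is strict, so by continuity $\tau^\ast=\tau_0\nu^\beta$, which is~\eqref{e:ansatz}. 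The main obstacle is the bookkeeping in estimating $\|\mathcal T_\ee\|_{L^p}$ — keeping track of which factor sits in $L^\infty$ versus $L^p$, isolating the genuinely small contribution $\ee\,e^{C\nu^{-1}}\|\nabla\bar u\|_{L^p}$ coming from~\eqref{e:elleduee}, and making sure the $\nu$-dependent constants line up so that the choice $\ee\le e^{-c_0\nu^{-1}}/4$ in~\eqref{hp:eps-z0} genuinely beats the $e^{C\nu^{-1}}$ loss from Lemma~\ref{l:vcl}. Everything else is the standard integrable-singularity Duhamel bootstrap.
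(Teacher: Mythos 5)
Your overall architecture (Duhamel representation, continuity/bootstrap argument on the sup of admissible times, integrable singularity of $\nabla G_\nu$, smallness of $\tau_0$) matches the paper's proof. But there is a genuine gap at the central step: the claimed bound
$\|\mathcal T_\ee(s,\cdot)\|_{L^p}\le C\bigl(\|z_\ee\|_{L^p}+\|z_\ee\|_{L^p}^2+\ee e^{C\nu^{-1}}\|\nabla\bar u\|_{L^p}\bigr)$
cannot be established with constants uniform in $\ee$. The first bracket in~\eqref{e:cosaet} contributes terms such as $[z_\ee+u_\nu]\cdot L\,|z_\ee\ast\eta_\ee|$ and $[z_\ee+u_\nu]\cdot L\,|u_\nu\ast\eta_\ee-u_\nu|$, i.e.\ products of two functions each of which you only control in $L^p$; such a product lives naturally in $L^{p/2}$, not in $L^p$. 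To land in $L^p$ you must place one factor in $L^\infty$, and the only $L^\infty$ information on $z_\ee=u_{\ee\nu}-u_\nu$ comes from~\eqref{e:elleunoinfinito}, whose constant depends on $\ee$ (it degenerates like $\exp(C\ee^{-d-1})$, since $\|\nabla\eta_\ee\|_{L^\infty}\sim\ee^{-d-1}$); alternatively $\|z_\ee\ast\eta_\ee\|_{L^\infty}\le\|z_\ee\|_{L^p}\|\eta_\ee\|_{L^{p/(p-1)}}\sim\ee^{-d/p}\|z_\ee\|_{L^p}$ also blows up. Either way the resulting $c_0,\tau_0$ would depend on $\ee$, which destroys the statement. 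A telltale sign that something is off is that your route never uses the hypothesis $p>d$ from~\eqref{e:condizionip}.

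The paper resolves exactly this difficulty by estimating $\mathcal T_\ee$ in $L^{p/2}$ (legitimate by H\"older with two $L^p$ factors; this is where $p\ge2$ enters) and pairing it in Young's convolution inequality with $\|\nabla G_\nu(t-s,\cdot)\|_{L^q}$ for the conjugate exponent $q=p/(p-1)$, rather than $q=1$. The price is a stronger singularity $(\nu(t-s))^{\alpha}$ with $\alpha=-(d+p)/(2p)$ instead of your $-1/2$; the condition $p>d$ is precisely what guarantees $\alpha>-1$, so the time integral still converges and produces the factor $\nu^\alpha\tau^{\alpha+1}$ that is beaten by taking $\tau\le\tau_0\nu^\beta$ with $\beta=(p+d)/(p-d)=-\alpha/(\alpha+1)$. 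If you replace your $L^1$--$L^p$ pairing by this $L^{p/(p-1)}$--$L^{p/2}$ pairing, the rest of your bootstrap goes through essentially as you wrote it (with the minor bookkeeping remark that the factor $\|\nabla\bar u\|_{L^p}$ coming from~\eqref{e:elleduee} should be absorbed into the outer constant $\tau_0$, which is allowed to depend on $\|\bar u\|_{W^{1,p}}$, not into $c_0$, which is not).
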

\begin{proof}
Let $c_0$ be the maximum between the constant in Lemma~\ref{l:wconvolution} and $1$. Set 
\begin{equation}
\label{e:deftau}
    \tau : = \sup \big\{ t \in [0, 1]: 
    \| z_\ee (s, \cdot) \|_{L^p } \leq 2 [\| z_0 \|_{L^p} + e^{c_0\nu^{-1}}\ee], \
    \text{for every $s \in [0, t]$}
    \big\} .
\end{equation}
Owing to Remarks~\ref{r} and~\ref{r2} and to \eqref{e:elleunoinfinito} and~\eqref{e:controlp}, the functions $u_{\ee \nu}$ and $u_\nu$ are continuous from  $[0, + \infty[$ to $L^p$. Hence, the function $\| z_\ee(t, \cdot)\|_{L^p}$ is  
continuous and $\tau>0$. Moreover, \eqref{e:deftau} 
implies 
\begin{equation}
\label{e:equality}
  \| z_\ee (\tau, \cdot) \|_{L^p } =  2 [\| z_0 \|_{L^p} + e^{c_0\nu^{-1}}\ee]. 
\end{equation}
We represent the solution of the Cauchy 
problem~\eqref{e:cpzeta} by the Duhamel Principle as 
\begin{equation}
\label{e:duhamel}
\begin{split}
       z_\ee(\tau, \cdot) & =
      G_\nu(\tau, \cdot) \ast z_0 - 
       \int_0^\tau \int_{\R^d} \nabla G_\nu(\tau-s, \cdot-y) \cdot
         \mathcal T_\ee (s, y)dy ds 
\end{split}       
\end{equation}
where $G_\nu$ denotes the heat kernel~\eqref{e:Gnu}. We apply \eqref{e:G1nu} and the Bochner and Young Theorems  to get 
		\begin{equation}
\label{e:iuno1}
\begin{split}
\| z_\ee(\tau, \cdot)\|_{L^p}
&{\leq} \|G_\nu(\tau, \cdot)\|_{L^1} \|z_0\|_{L^p} +\int_0^\tau 
\left\|\int_{\R^d} \nabla G_\nu(\tau-s, \cdot) \cdot  
\mathcal T_\ee (s, \cdot)
\right\|_{L^p} ds
\\&
{\leq} \|z_0\|_{L^p}+
\int_0^\tau \left\| \nabla G_\nu(\tau-s, \cdot) \right\|_{L^q}
\left\| 
\mathcal T_\ee (s, \cdot)
\right\|_{L^{p/2}} ds,
\end{split}
\end{equation}      
noting that $p/2 \ge 1$ owing to~\eqref{e:condizionip}, and setting $q:= p/(p-1)$. 
Only in the rest of this proof we denote by $C$ any constant that only depends on $d, p, L, \| \bar u \|_{L^\infty}, \| \bar u \|_{W^{1,p}}$. 
For every $s\in [0,\tau]$, by \eqref{e:cosaet}, the H\"older inequality and \eqref{e:v} we have 
\begin{equation}
\begin{split}
      \big\| 
       \mathcal T_\ee (s, \cdot) 
       \big\|_{L^{p/2}}
       & \leq \,
       \big(\| z_\ee \|_{L^p} + \| u_\nu \|_{L^p} \big)
      \left\| 
       b\big( [z_\ee + u_\nu]  \ast \eta_\ee\big)- b(u_\nu) 
       \right\|_{L^p} + \|z_\ee \|_{L^p} \| b(u_\nu )\|_{L^p}\\ 
       & \leq \,
       C  \big(\| z_\ee \|_{L^p} + \| u_\nu \|_{L^p} \big)
      \left\| 
       ( [z_\ee + u_\nu]  \ast \eta_\ee) - u_\nu
       \right\|_{L^p}  + C \|z_\ee \|_{L^p} \| u_\nu\|_{L^p}
\end{split}
\end{equation}
(all functions at the right hand side are evaluated at time $s$).
By the Young inequality we have $\| z_\ee \ast \eta_\ee \|_{L^p} \leq  \| z_\ee \|_{L^p}$, and applying also \eqref{e:elleduee} we get
\begin{equation}
\label{e:iuno2}
\begin{split}
		 \big\| 
		&  
		\mathcal T_\ee (s, \cdot) 
		\big\|_{L^{p/2}}{\leq}  
       C  \big(\| z_\ee(s, \cdot)  \|_{L^p} + \| u_\nu(s, \cdot)  \|_{L^p} \big)  \big[ 
        \| z_\ee (s, \cdot) \|_{L^p}  +
       e^{c_0\nu^{-1}} \ee \big] .
\end{split}
\end{equation}
We recall that $s \leq \tau$ and so~\eqref{e:ansatz} holds. Also, we recall~\eqref{e:alpha} and~\eqref{e:G1nu} and we point out that 
$\alpha = -(d+p)/(2p)\in( -1 , 0)$ by~\eqref{e:condizionip}.
Using this and~\eqref{e:iuno2}, we go back to~\eqref{e:iuno1} to get
\begin{equation}
\label{e:duhamel2}
\begin{split}
    \| z_\ee (\tau, \cdot) \|_{L^p} &\leq  \| z_0 \|_{L^p} + 
   C \nu^\alpha \big[ 2 [\| z_0 \|_{L^p} + e^{c_0\nu^{-1}}\ee] +1 \big] \big[ 2 [\| z_0 \|_{L^p} + e^{c_0\nu^{-1}}\ee] +  e^{c_0\nu^{-1}}\ee
    \big]  \tau^{\alpha+1}
    \\&\leq  \| z_0 \|_{L^p} + 
    6C \nu^\alpha [\| z_0 \|_{L^p} + e^{c_0\nu^{-1}}\ee] \tau^{\alpha+1},
       \end{split}
\end{equation}
where in the last inequality we used \eqref{hp:eps-z0} to show that $ 2 [\| z_0 \|_{L^p} + e^{c_0\nu^{-1}}\ee] +1  \leq 2$. By comparing~\eqref{e:duhamel2} with~\eqref{e:equality} we arrive at 
$$
  2 [\| z_0 \|_{L^p} + e^{c_0\nu^{-1}}\ee] \leq \| z_0 \|_{L^p} + 6C \nu^\alpha [\| z_0 \|_{L^p} + e^{c_0\nu^{-1}}\ee] \tau^{\alpha+1}, $$
  which implies 
 $$   [\| z_0 \|_{L^p} + e^{c_0\nu^{-1}}\ee] \leq 6C \nu^\alpha [\| z_0 \|_{L^p} + e^{c_0\nu^{-1}}\ee] \tau^{\alpha+1} \implies (6C)^{-1} \nu^{-\alpha}\leq \tau^{\alpha+1} .
$$
This gives a lower bound on $\tau$ and concludes the proof of the Lemma by choosing $\tau_0 =(6C)^{-\frac{1}{\alpha+1}}$. 
\end{proof}
	\subsubsection{Conclusion of the proof of Theorem~\ref{t:convreg}}
\label{sss:concludo}
Let $0<\nu<1/4$, and let $\tau_0$ and $c_0$ be as in the statement of Lemma~\ref{l:zeta}. Let
$
m: = \mathrm{int} \left( ({\tau_0 \nu^\beta})^{-1} \right) +1, 
$
where $\mathrm{int}(\cdot)$ denotes the integer part, and let $ \ee \leq  e^{-2c_0\nu^{-1}} 4^{-m-1}$. This is implied for instance by 
$$\ee \leq e^{-c \nu^{-\beta}} \qquad \mbox{for }c:= c(d, p, L, \| \bar u \|_{L^\infty},  \| \bar u \|_{W^{1,p}})>0.$$
We show by induction that for every $i=1,..., m$
\begin{equation}\label{eqn:induct}
\| z_\ee (t, \cdot) \|_{L^p} \leq 
e^{c_0\nu^{-1}} \ee 4^i
\quad \text{for every $t \in [(i-1)\tau_0 \nu^\beta, i\tau_0 \nu^\beta]$} .
\end{equation}
Indeed, by~\eqref{e:cpr},~\eqref{e:vclr} and~\eqref{e:zetaee} we have 
$z_\ee (0, x) \equiv 0$;
we apply estimate~\eqref{e:ansatz} on $[0, \tau_0 \nu^\beta]$ to get \eqref{eqn:induct} with $i=1$. If the statement holds true for $i$, we have that $\| z_\ee (i\tau_0 \nu^\beta, \cdot) \|_{L^p} \leq 
e^{c_0\nu^{-1}} \ee 4^i \leq e^{-c_0\nu^{-1}}/4 $; to get the statement for $i+1$, we apply 
estimate~\eqref{e:ansatz} with $z_0:= z_\ee(i\tau_0 \nu^\beta, \cdot)$, obtaining 
$$
\| z_\ee (t, \cdot) \|_{L^p } \leq 
2 e^{c_0\nu^{-1}} \big[\ee 4^i + \ee \big]\leq 4^{i+1}e^{c_0\nu^{-1}} \ee
\quad \text{for every $t \in [i\tau_0 \nu^\beta,(i+1)\tau_0 \nu^\beta].$} 
$$
This establishes~\eqref{e:convreg} and concludes the proof of Theorem~\ref{t:convreg}. \qed

\section{Proof of Theorem~\ref{t:convbadata}}
\label{s:teorema11}
We first explain the basic ideas of the proof. We need some notation: we term 
\begin{equation}
\label{e:semigroups}
S^{\ee \nu}_t: L^1 \cap L^\infty \times [0, +\infty[
\to L^1 \cap L^\infty, \qquad S^\nu_t: L^1 \cap L^\infty \times [0, +\infty[
\to L^1 \cap L^\infty
\end{equation} the semigroup of solutions of the equations at the first line of~\eqref{e:cpr} and~\eqref{e:vclr}, respectively. In other words, $u_{\ee \nu} (t, \cdot) = S^{\ee\nu}_t \bar u$ and $u_\nu(t, \cdot) = S^{\nu}_t \bar u$. Next, we fix $\dd \in L^1 \cap L^\infty $ and a regularity parameter $0<\lambda<1$ and  we decompose $\dd$ as 
\begin{equation}
\label{e:di}
    \dd : ={\displaystyle{\dd_r}} +{\displaystyle{\dd_s}}, \qquad \mbox{where}\qquad {\displaystyle{\dd_r}}:={\dd \ast \rho_\lambda}, \qquad
    {\displaystyle{\dd_s}}={ \dd - \dd\ast \rho_\lambda}. 
\end{equation}
In the previous expression $\rho_\lambda$ is a given standard family of convolution kernels, obtained by setting 
$\rho_\lambda (x) : = \lambda^{-d} \rho \left(  x / \lambda \right)$ for a standard (i.e., smooth, positive, radial, compactly supported, and with unit integral) convolution kernel $\rho$, with $\| \rho\|_{C^1(\R^d)} \leq C(d)$. 

%
%
%

Note that $\dd_r$ is regular, and hence we can apply Theorem~\ref{t:convreg} to show that $S^{\ee \nu}_t \dd_r$
converges to $S^\nu_t \dd_r$, with a convergence rate that deteriorates when $\lambda \to 0^+$. Also, we can choose the regularizing parameter $\lambda$ in such a way that $\dd_s=\dd-\dd_r$ is small. The basic point in the proof of Theorem~\ref{t:convbadata} is then establishing a uniform control 
on the growth of  $\| S^{\ee \nu}_t \dd- S^{\ee \nu}_t \dd_r  \|_{L^p}$. This is done in~\S~\ref{ss:perturbo} below. Next, in~\S~\ref{ss:stability} we establish some stability estimates and in~\S~\ref{ss:concludiamo} we conclude the proof by using an iteration argument.  
\subsection{Perturbations estimates} We begin by establishing some perturbation estimates.
\label{ss:perturbo}
\begin{lemma}
\label{l:duhamel1}
Fix $p$ satisfying~\eqref{e:condizionip}, $\dd \in L^1\cap L^\infty$, and let $\dd_r$ and $\dd_s$ be as in~\eqref{e:di}. 
Assume that 
\begin{equation}
\label{e:delta}
    \| \dd_s\|_{L^p} \leq \delta \leq 1,
\end{equation}
 \begin{equation}
 \label{e:digrande}
   \| \dd \|_{L^p} \leq D, \quad \| \dd\|_{L^\infty} \leq B
 \end{equation}
for some positive constants $D>0$, $B>0$. 
Then there are constants $ \bar \ee (d, p, L, B, D,\nu, \lambda)$
and  $\sigma = \sigma (d, p, L, D, \nu)$ such that, if $\ee \leq \bar \ee$, then 
\begin{equation}
\label{e:zanzara}
     \| S^{\ee \nu}_t \dd - S^{\ee \nu}_t \dd_r \|_{L^p} \leq 2 \delta, 
     \quad \text{for every $t \in [0, \sigma].$}
\end{equation}
\end{lemma}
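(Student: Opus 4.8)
The plan is to estimate the difference $w_\ee(t,\cdot) := S^{\ee \nu}_t \dd - S^{\ee \nu}_t \dd_r = u_{\ee\nu}(t,\cdot) - v_{\ee\nu}(t,\cdot)$ by a continuation (bootstrap) argument in $L^p$, in the spirit of Lemma~\ref{l:zeta}. Here $u_{\ee\nu}(t,\cdot) = S^{\ee\nu}_t\dd$ and $v_{\ee\nu}(t,\cdot) = S^{\ee\nu}_t\dd_r$; by Young's inequality $\|\dd_r\|_{L^p} = \|\dd\ast\rho_\lambda\|_{L^p} \le \|\dd\|_{L^p}\le D$, while $w_\ee(0,\cdot) = \dd_s$ has $\|w_\ee(0,\cdot)\|_{L^p}\le\delta\le 1$ by~\eqref{e:delta}. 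Subtracting the two copies of~\eqref{e:cpr}, the function $w_\ee$ solves $\partial_t w_\ee + \div\mathcal{T}_\ee = \nu\Delta w_\ee$ with
\[
  \mathcal{T}_\ee = u_{\ee\nu}\big[b(u_{\ee\nu}\ast\eta_\ee) - b(v_{\ee\nu}\ast\eta_\ee)\big] + w_\ee\, b(v_{\ee\nu}\ast\eta_\ee),
\]
and I would represent $w_\ee$ through the Duhamel formula with the heat kernel $G_\nu$ of~\eqref{e:Gnu}, exactly as in~\eqref{e:duhamel}.

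First I would obtain \emph{uniform-in-$\ee$} $L^p$ bounds on $u_{\ee\nu}$ and $v_{\ee\nu}$ on a short time interval. Applying Duhamel to~\eqref{e:cpr}, using $\|G_\nu(t,\cdot)\|_{L^1}=1$ and $\|\nabla G_\nu(t,\cdot)\|_{L^q}=C(d,q)(\nu t)^\alpha$ from~\eqref{e:G1nu} (with $q=p/(p-1)$ and $\alpha=-(d+p)/(2p)\in(-1,0)$, see~\eqref{e:alpha} and~\eqref{e:condizionip}), Young's and Hölder's inequalities, and $\|b(y\ast\eta_\ee)\|_{L^p}\le L\|y\ast\eta_\ee\|_{L^p}\le L\|y\|_{L^p}$ from~\eqref{e:v}, one gets for a solution $y_{\ee\nu}$ of~\eqref{e:cpr} with datum of $L^p$ norm $\le D$
\[
  \|y_{\ee\nu}(t,\cdot)\|_{L^p} \le D + C(d,p)\,L\,\nu^\alpha\int_0^t(t-s)^\alpha\|y_{\ee\nu}(s,\cdot)\|_{L^p}^2\,ds.
\]
Since Theorem~\ref{t:wp} already provides an ($\ee$-dependent but finite) $L^\infty$ bound, $t\mapsto\|y_{\ee\nu}(t,\cdot)\|_{L^p}$ is continuous, so a standard continuation argument applied to this quadratic integral inequality yields $T_*=T_*(d,p,L,D,\nu)\in(0,1]$ with $\|u_{\ee\nu}(t,\cdot)\|_{L^p},\|v_{\ee\nu}(t,\cdot)\|_{L^p}\le 2D$ for $t\le T_*$. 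Then I would run the continuation argument on $w_\ee$: set $\tau := \sup\{t\in[0,\sigma]: \|w_\ee(s,\cdot)\|_{L^p}\le 2\delta\ \forall\, s\le t\}$ for a constant $\sigma\le T_*$ to be fixed. On $[0,\tau]$, using $\|b(u_{\ee\nu}\ast\eta_\ee)-b(v_{\ee\nu}\ast\eta_\ee)\|_{L^p}\le L\|w_\ee\ast\eta_\ee\|_{L^p}\le L\|w_\ee\|_{L^p}$, Hölder's inequality, the bounds just obtained, and~\eqref{e:v}, one estimates $\|\mathcal{T}_\ee(s,\cdot)\|_{L^{p/2}}\le L\|w_\ee(s,\cdot)\|_{L^p}\big(\|u_{\ee\nu}(s,\cdot)\|_{L^p}+\|v_{\ee\nu}(s,\cdot)\|_{L^p}\big)\le 4DL\,\|w_\ee(s,\cdot)\|_{L^p}$; feeding this into the Duhamel formula gives
\[
  \|w_\ee(t,\cdot)\|_{L^p}\le\delta + C(d,p)\,LD\,\nu^\alpha\int_0^t(t-s)^\alpha\|w_\ee(s,\cdot)\|_{L^p}\,ds \le \delta\Big(1 + \tfrac{C(d,p)LD\nu^\alpha}{\alpha+1}\,t^{\alpha+1}\Big)
\]
for $t\le\tau$. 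Choosing $\sigma=\sigma(d,p,L,D,\nu)\le T_*$ so that $\tfrac{C(d,p)LD\nu^\alpha}{\alpha+1}\sigma^{\alpha+1}<1$ forces $\|w_\ee(\tau,\cdot)\|_{L^p}<2\delta$, which contradicts $\|w_\ee(\tau,\cdot)\|_{L^p}=2\delta$ unless $\tau=\sigma$; this proves~\eqref{e:zanzara}.

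The main obstacle is precisely the first step: producing an $L^p$ bound on $u_{\ee\nu},v_{\ee\nu}$ over a time span that does \emph{not} degenerate as $\ee\to 0^+$. The naive route through the $L^\infty$ estimate of Theorem~\ref{t:wp} fails here because that bound behaves like $\exp(C t\,\ee^{-d-1})$; the key point making the bootstrap work is that the nonlocality is harmless at the $L^p$ level, since $\|b(y\ast\eta_\ee)\|_{L^p}\le L\|y\|_{L^p}$ independently of $\ee$, after which everything is routine. The threshold $\bar\ee=\bar\ee(d,p,L,B,D,\nu,\lambda)$ enters only to make the continuation sets meaningful: for $\ee\le\bar\ee$ one invokes Theorem~\ref{t:wp} (whose constants involve $\ee$, $B=\|\dd\|_{L^\infty}$, and — through $\dd_r=\dd\ast\rho_\lambda$ — the parameter $\lambda$) to guarantee $u_{\ee\nu},v_{\ee\nu}\in C^0([0,1];L^p)$, so that $\tau>0$ and $t\mapsto\|w_\ee(t,\cdot)\|_{L^p}$ is continuous; the resulting $\sigma$ and the bound $2\delta$ remain independent of $\ee$, $B$, $\lambda$, as required.
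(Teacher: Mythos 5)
Your proof is correct, and its skeleton --- write the equation for $w_\ee = S^{\ee \nu}_t \dd - S^{\ee \nu}_t \dd_r$, represent it by Duhamel with the kernel $G_\nu$, and close a continuation argument on the set where $\| w_\ee \|_{L^p} \leq 2\delta$ using $\| \nabla G_\nu(t,\cdot)\|_{L^q} = C(\nu t)^\alpha$ with $\alpha \in (-1,0)$ --- is exactly the paper's. Where you genuinely deviate is the step you correctly identify as the crux: the $\ee$-uniform $L^p$ control of the background flows. The paper only needs a bound on $\| S^{\ee \nu}_t \dd_r \|_{L^p}$ (in its analogue~\eqref{e:mestre3} of your flux estimate it writes $\| S^{\ee \nu}_t \dd \|_{L^p} \leq \| S^{\ee \nu}_t \dd_r \|_{L^p} + \| w_\ee \|_{L^p}$ and uses $\|w_\ee\|_{L^p}\le 2\delta\le 2$), and it obtains that bound by applying Theorem~\ref{t:convreg} to the smooth datum $\dd_r$, as in~\eqref{e:regolarizzata}--\eqref{e:regolarizzata2}: $\| S^{\ee \nu}_t \dd_r \|_{L^p} \leq C(\dots,\lambda)\,\ee + D \leq 3D/2$ for $\ee \leq \bar\ee$. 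This is precisely what makes the paper's $\bar\ee$ depend on $\lambda$ and $B$. You instead run a quadratic Duhamel bootstrap on the nonlocal viscous equation itself to get $\| S^{\ee \nu}_t \dd \|_{L^p}, \| S^{\ee \nu}_t \dd_r \|_{L^p} \leq 2D$ on a time span $T_*(d,p,L,D,\nu)$; this is verbatim the content of the paper's Lemma~\ref{l:lpcontrol}, which is proved independently later in the same section, so there is no circularity. What your route buys: no smallness of $\ee$ is actually required and $\bar\ee$ loses its dependence on $\lambda$ and $B$, since Theorem~\ref{t:wp} is invoked only for the qualitative continuity of $t \mapsto \| w_\ee(t,\cdot)\|_{L^p}$, which holds for every $\ee>0$. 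What the paper's route buys: it reuses Theorem~\ref{t:convreg}, which is needed anyway a few lines later in the iteration of \S~\ref{ss:concludiamo}, and postpones the $L^p$ control lemma to where it is formally stated. Both yield the required dependence $\sigma = \sigma(d,p,L,D,\nu)$.
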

\begin{proof}
We set
\begin{equation}
\label{e:cosaev}
      v_\ee : = S^{\ee \nu}_t \dd - S^{\ee \nu }_t \dd_r 
\end{equation}
and we point out  that $v_\ee$ is a solution of the Cauchy problem
\begin{equation*}
\left\{
\begin{array}{ll}
\partial_t v_\ee + \div \big[ v_\ee b(  S^{\ee \nu}_t \dd \ast \eta_\ee) 
+ S^{\ee \nu}_t \dd_r \big( b(  S^{\ee \nu}_t \dd \ast \eta_\ee)  -
b(  S^{\ee \nu}_t \dd_r \ast \eta_\ee) 
\big) 
\big] = \nu \Delta v_\ee \\
v_\ee (0, x) = \dd_s (x). \\
\end{array}
\right.
\end{equation*}
We introduce $\sigma$ by setting 
$$
    \sigma : = \sup \big\{ t \in [0, 1]: \; \| v_\ee (s, \cdot) \|_{L^p} 
    \leq 2 \delta \; \text{for every 
     $s \in [0, t]$} \big\} .
$$
Note that, if $\sigma<1$, we have
\begin{equation}
\label{e:uguale}
    \| v_\ee (\sigma, \cdot) \|_{L^p} = 2  \delta. 
\end{equation}
We now provide a lower bound on $\sigma$. 
By using the Duhamel representation formula we get
\begin{equation}
\label{e:mestre}
     v_\ee (t, \cdot) = G_\nu(t, \cdot) \ast \dd_s - 
       \int_0^t \int_{\R^d}
       \nabla G_\nu(t-s, \cdot-y) \cdot \Big[ v_\ee b(  S^{\ee \nu}_t \dd \ast \eta_\ee) 
+ S^{\ee \nu}_t \dd_r \big[ b(  S^{\ee \nu}_t \dd \ast \eta_\ee)  -
b(  S^{\ee \nu}_t \dd_r \ast \eta_\ee) 
\big] 
\Big] (s, y) dy ds. 
\end{equation}
We fix $q:= p/(p-1)$ and $ \alpha > -1$ given by~\eqref{e:alpha}. Applying the Bochner and Young Theorems we get
\begin{equation}
\begin{split}
\label{e:mestre2}
        \| v_\ee (t, \cdot)\|_{L^p} 
       & \leq 
        \| G_\nu(t, \cdot)\|_{L^1} \| \dd_s \|_{L^p} \\
        & \quad + 
       \int_0^t \| \nabla 
       G_\nu(t-s, \cdot) \|_{L^q}
       \Big[  \| v_\ee b(  S^\ee_t \dd \ast \eta_\ee) (s, \cdot) +  S^\ee_t \dd_r \big[ b(  S^\ee_t \dd \ast \eta_\ee)  -
b(  S^\ee_t \dd_r \ast \eta_\ee) \big] (s, \cdot) \|_{L^{p/2}}  \Big] ds
\\  & \hspace{-0.5cm} \stackrel{\eqref{e:G1nu},\eqref{e:delta}}{\leq} 
     \delta  + C(d, p, \nu)
       \int_0^t (t-s)^{\alpha}
       \Big[  \| v_\ee b(  S^{\ee \nu}_t \dd \ast \eta_\ee) (s, \cdot) \|_{L^{p/2}}\\
       & \hspace{6.5cm}
+ \| S^{\ee \nu}_t \dd_r \big[ b(  S^{\ee \nu}_t \dd \ast \eta_\ee)  -
b(  S^{\ee \nu}_t \dd_r \ast \eta_\ee) \big] (s, \cdot) \|_{L^{p/2}}  \Big] ds .
\end{split}
\end{equation}
Next, by the H\"older inequality, \eqref{e:v},~\eqref{e:cosaev} and the Young inequality to get 
\begin{equation}
\label{e:mestre3}
\begin{split}  
    \| v_\ee b(  S^{\ee \nu}_t \dd &\ast \eta_\ee) \|_{L^{p/2}}
+ \| S^{\ee \nu}_t \dd_r \big[ b(  S^{\ee \nu}_t \dd \ast \eta_\ee)  -
b(  S^{\ee \nu}_t \dd_r \ast \eta_\ee) \big]  \|_{L^{p/2}}
    \\
    & \leq 
     \| v_\ee \|_{L^p} \| b(  S^{\ee \nu}_t \dd \ast \eta_\ee)\|_{L^p}
+ \| S^{\ee \nu}_t \dd_r \|_{L^p} \| b(  S^{\ee \nu}_t \dd \ast \eta_\ee)  -
b(  S^{\ee \nu}_t \dd_r \ast \eta_\ee) \|_{L^p}  
    \\  & \leq 
     L \| v_\ee \|_{L^p} \| S^{\ee \nu}_t \dd \ast \eta_\ee\|_{L^p}
+L \| S^{\ee \nu}_t \dd_r \|_{L^p} \| v_\ee \ast \eta_\ee \|_{L^p}  \\
 & \leq 
  L \| v_\ee \|_{L^p} \| S^{\ee \nu}_t \dd \|_{L^p}
       +L \| S^{\ee \nu}_t \dd_r \|_{L^p} \| v_\ee \|_{L^p}  \\
       & \leq L
  \| v_\ee \|_{L^p} \big[ \| S^{\ee \nu}_t \dd_r \|_{L^p} + \| v_\ee \|_{L^p}
       \big] 
       + L\| S^{\ee \nu}_t \dd_r \|_{L^p} \| v_\ee \|_{L^p}  \\
       & \leq 2 L 
        \| v_\ee \|_{L^p} \big[ \| S^{\ee \nu}_t \dd_r \|_{L^p} + \| v_\ee \|_{L^p}
       \big] . \\
\end{split} 
\end{equation}
We recall the definition~\eqref{e:di} of $\dd_r$ and we point out that $\dd_r$ is smooth and henceforth satisfies the hypotheses of Theorem~\ref{t:convreg}. By applying~\eqref{e:convreg} and \eqref{e:controlp}, we get 
\begin{equation}
\label{e:regolarizzata} 
\begin{split}
        \| S^{\ee \nu}_t \dd_r \|_{L^p} \leq 
        \| S^{\ee \nu}_t \dd_r - S^\nu_t \dd_r  \|_{L^p}
        + \| S^\nu_t \dd_r \|_{L^p} {\leq} 
       C(d, p, L, \| \dd_r \|_{L^\infty},  \| \dd_r \|_{W^{1,p}}, \nu)  \ee     
        + \| \dd_r \|_{L^p} .
\end{split}   
\end{equation} 
Since $\dd_r = \dd \ast \rho_\lambda$, by \eqref{e:digrande} we have 
\begin{equation}
\label{e:stimeconv}
    \| \dd_r \|_{L^\infty} \leq \| \dd \|_{L^\infty} {\leq} B, 
    \quad 
     \| \dd_r \|_{L^p} \leq \| \dd \|_{L^p} {\leq} D, 
     \quad 
    \| \nabla \dd_r \|_{L^p} \leq \| \dd \|_{L^p} \| \nabla \rho_\lambda \|_{L^1}
    {\leq} C(d,D, \lambda)  
\end{equation}
and hence~\eqref{e:regolarizzata} implies
 $$
        \| S^{\ee \nu}_t \dd_r \|_{L^p} \leq 
        C(d, p, L, B,  D,\nu, \lambda)  \ee     
        + D,
$$ 
so if $\ee \leq \bar \ee (d, p, L, B,  D,\nu, \lambda)$ is sufficiently small, then 
\begin{equation}
\label{e:regolarizzata2} 
          \| S^{\ee \nu}_t \dd_r \|_{L^p} \leq 
         {3}D/2.
\end{equation}
We combine~\eqref{e:uguale},~\eqref{e:mestre2},~\eqref{e:mestre3} and~\eqref{e:regolarizzata2} and we get that
\begin{equation}
\begin{split}
     \| v_\ee (\sigma, \cdot) \|_{L^p} & \;= 2 \delta \leq \delta  + C(d, L, p, \nu)
       \int_0^{\sigma} (\sigma-s)^\alpha
       \Big[  \| v_\ee \|_{L^p} \big[ D + \| v_\ee \|_{L^p}
       \big] \Big]
       (s, \cdot) ds \\
       & \stackrel{s \leq \sigma}{\leq}
        \delta  + C(d, L, p, \nu)
       \int_0^{\sigma} (\sigma-s)^\alpha
       \Big[  2 \delta \big[ D + 2 \delta
       \big] \Big]
       (s, \cdot) ds
       = \Big[ 1 + C(d, L, p, \nu)  \sigma^{\alpha+1}\big[ D + 2 \delta  \big] \Big] \delta  \\
       & \stackrel{\delta \leq 1}{\leq}
        \Big[ 1 + C(d, L, p, \nu)  \sigma^{\alpha +1 }\big[ D + 2 \big] \Big] \delta.      
\end{split}
\end{equation}
The above chain of inequalities implies that 
 $
     1 \leq  
     C(d, L, p, \nu)  \sigma^{\alpha +1 }\big[ D + 2 \big]
 $
and this provides a lower bound on $\sigma$ that only depends on $d$, $p$, $L$, $\nu$ and $D$. 
\end{proof} 
\subsection{Stability estimates}
\label{ss:stability}
We now establish a conditional stability estimate. 
\begin{lemma}
\label{l:estability}
Fix $\dd_1, \dd_2 \in L^1  \cap L^\infty $ and $p$ satisfying~\eqref{e:condizionip} and assume there are constants $F>0$ and $T>0$ such that 
\begin{equation}
\label{e:hypstab}
       \| S^{\ee \nu}_t \dd_1 \|_{L^p}, \;  
       \| S^{\ee \nu}_t \dd_2 \|_{L^p}
       \leq F, \quad \text{for every $t \in [0, T]$.}
\end{equation}
Then there is a threshold $\varpi = \varpi(L, F, d, p, \nu) \in \, ]0,T]$ such that 
\begin{equation}
\label{e:tstab}
          \| S^{\ee \nu}_t \dd_1 -  S^{\ee \nu}_t \dd_2 \|_{L^p} \leq 
           2 \| \dd_1 - \dd_2 \|_{L^p} , 
          \quad \text{for every $t \in [0, \varpi]$.}
\end{equation}
\end{lemma}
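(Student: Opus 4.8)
The plan is to prove this conditional stability estimate by the same Duhamel-plus-Grönwall scheme used in Lemma~\ref{l:duhamel1}, but now tracking the difference of two solutions started from different data rather than the difference of one solution with a regularization. I would set $v_\ee := S^{\ee\nu}_t\dd_1 - S^{\ee\nu}_t\dd_2$ and write down the Cauchy problem it solves. Subtracting the two copies of the first line of~\eqref{e:cpr}, one gets
\begin{equation*}
\partial_t v_\ee + \div\Big[ v_\ee\, b(S^{\ee\nu}_t\dd_1 \ast \eta_\ee) + S^{\ee\nu}_t\dd_2\big( b(S^{\ee\nu}_t\dd_1\ast\eta_\ee) - b(S^{\ee\nu}_t\dd_2\ast\eta_\ee)\big)\Big] = \nu\Delta v_\ee,
\end{equation*}
with initial datum $\dd_1 - \dd_2$. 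The structure of the flux term is formally identical to the one in the proof of Lemma~\ref{l:duhamel1} with $\dd_r$ replaced by $\dd_2$, so the estimates carry over almost verbatim.

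**Next I would** introduce the stopping time $\varpi := \sup\{ t\in[0,T] : \|v_\ee(s,\cdot)\|_{L^p} \le 2\|\dd_1-\dd_2\|_{L^p}\ \text{for all}\ s\in[0,t]\}$, noting that by the $L^p$-continuity of $t\mapsto S^{\ee\nu}_t\dd_i$ (from~\eqref{e:elleunoinfinito} and Remark~\ref{r}) this is well-defined and positive, and if $\varpi < T$ then $\|v_\ee(\varpi,\cdot)\|_{L^p} = 2\|\dd_1-\dd_2\|_{L^p}$. Applying the Duhamel representation with the heat kernel $G_\nu$, then the Bochner and Young inequalities together with $\|G_\nu(t,\cdot)\|_{L^1}=1$ and $\|\nabla G_\nu(t-s,\cdot)\|_{L^q} = C(d,q)(\nu(t-s))^\alpha$ from~\eqref{e:G1nu} with $q=p/(p-1)$ and $\alpha\in(-1,0)$ from~\eqref{e:alpha}, I would bound
\begin{equation*}
\|v_\ee(t,\cdot)\|_{L^p} \le \|\dd_1-\dd_2\|_{L^p} + C(d,p,\nu)\int_0^t (t-s)^\alpha \,\big\| v_\ee\, b(S^{\ee\nu}_s\dd_1\ast\eta_\ee) + S^{\ee\nu}_s\dd_2\big[\,b(S^{\ee\nu}_s\dd_1\ast\eta_\ee) - b(S^{\ee\nu}_s\dd_2\ast\eta_\ee)\,\big]\big\|_{L^{p/2}}\, ds.
\end{equation*}
The $L^{p/2}$ norm inside is then controlled exactly as in~\eqref{e:mestre3}: Hölder, the Lipschitz bound~\eqref{e:v}, Young's convolution inequality $\|v_\ee\ast\eta_\ee\|_{L^p}\le\|v_\ee\|_{L^p}$, and the hypothesis~\eqref{e:hypstab} give a bound of the form $2L\|v_\ee\|_{L^p}\big(\|S^{\ee\nu}_s\dd_2\|_{L^p} + \|v_\ee\|_{L^p}\big) \le 2L\|v_\ee\|_{L^p}(F + 2\|\dd_1-\dd_2\|_{L^p})$ for $s\le\varpi$, where I also use $\|S^{\ee\nu}_s\dd_1\|_{L^p}\le F$ to control $\|S^{\ee\nu}_s\dd_1\ast\eta_\ee\|_{L^p}$.

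**To conclude**, I would evaluate the resulting inequality at $t=\varpi$ (in the case $\varpi<T$), using $\int_0^\varpi(\varpi-s)^\alpha ds = \varpi^{\alpha+1}/(\alpha+1)$ and $\|v_\ee(s,\cdot)\|_{L^p}\le 2\|\dd_1-\dd_2\|_{L^p}$ for $s\le\varpi$, obtaining
\begin{equation*}
2\|\dd_1-\dd_2\|_{L^p} \le \Big[\,1 + C(d,p,L,\nu)\,\varpi^{\alpha+1}(F+2)\,\Big]\|\dd_1-\dd_2\|_{L^p},
\end{equation*}
which forces $1 \le C(d,p,L,\nu)\,\varpi^{\alpha+1}(F+2)$, i.e. a lower bound on $\varpi$ depending only on $L,F,d,p,\nu$; choosing $\varpi$ equal to the minimum of $T$ and this threshold gives~\eqref{e:tstab}. (If $\|\dd_1-\dd_2\|_{L^p}=0$ the statement is trivial by uniqueness, so we may assume it is positive and divide.) The one point requiring a little care — the \emph{main obstacle}, though it is mild — is making sure the a~priori bound $\|v_\ee\|_{L^p}\le 2\|\dd_1-\dd_2\|_{L^p}$ used inside the integral is genuinely available on $[0,\varpi]$ and not circular: this is exactly why one runs the continuation/bootstrap argument through the stopping time $\varpi$ rather than trying to close a Grönwall estimate directly, since the nonlinearity $\|v_\ee\|_{L^p}^2$ term would otherwise obstruct a clean linear Grönwall bound. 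Note also that no smallness of $\ee$ is needed here: unlike Lemma~\ref{l:duhamel1}, the estimate is uniform in $\ee$ because the only role of $\eta_\ee$ is through the contraction $\|\cdot\ast\eta_\ee\|_{L^p}\le\|\cdot\|_{L^p}$.
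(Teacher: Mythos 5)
Your argument is correct and is essentially the paper's own proof: Duhamel representation, the Bochner--Young--H\"older chain with \eqref{e:v} and \eqref{e:hypstab}, and the stopping-time bootstrap yielding a lower bound on $\varpi$ of the form $1\le C(L,F,d,p,\nu)\varpi^{\alpha+1}$. The only slip is cosmetic: your final display replaces $\big(F+2\|\dd_1-\dd_2\|_{L^p}\big)$ by $(F+2)$, which would need $\|\dd_1-\dd_2\|_{L^p}\le 1$; but since \eqref{e:hypstab} at $t=0$ gives $\|\dd_1-\dd_2\|_{L^p}\le 2F$, the factor is $\le 5F$ and the threshold still depends only on $L,F,d,p,\nu$ (the paper sidesteps this by bounding both $\|S^{\ee\nu}_s\dd_1\|_{L^p}$ and $\|S^{\ee\nu}_s\dd_2\|_{L^p}$ by $F$ directly, obtaining a bound linear in $\|v_\ee\|_{L^p}$).
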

\begin{proof}
We use the Duhamel Representation Formula and get 
\begin{equation*}
\begin{split}
  S^{\ee \nu}_t \dd_1 - S^{\ee \nu}_t \dd_2 & =
  [\dd_1 - \dd_2] \ast G_\nu(t, \cdot) \\
  & \qquad -
  \int_0^t \int_{\R^d} 
  \nabla G_\nu (t-s, \cdot-y)  
  \big[S^{\ee \nu}_s \dd_1 \, b(S^{\ee \nu}_s \dd_1 \ast \eta_\ee)
  -  S^{\ee \nu}_s \dd_2 \, b(S^{\ee \nu}_s \dd_2 \ast \eta_\ee)
\big] (y) dyds,
\end{split}
\end{equation*}
which owing to the Bochner and Young Theorems implies 
\begin{equation}
\label{e:pezzo0}
\begin{split}
  \| S^{\ee \nu}_t \dd_1 - S^{\ee \nu}_t \dd_2\|_{L^p} & 
\leq 
  \| \dd_1 - \dd_2 \|_{L^p} \| G_\nu(t, \cdot) \|_{L^1}
  \\ & +
  \int_0^t \| 
  \nabla G_\nu (t-s, \cdot) \|_{L^q}
 \| S^{\ee \nu}_s \dd_1 \, b(S^{\ee \nu}_s \dd_1 \ast \eta_\ee)
  -  S^{\ee \nu}_s \dd_2 \, b(S^{\ee \nu}_s \dd_2 \ast \eta_\ee)\|_{L^{p/2}} ds 
  \end{split}
\end{equation}  
provided $q:= p/(p-1)$. 
By the H\"older inequality we get 
\begin{equation}
\label{e:pezzo3} 
\begin{split}
    \| S^{\ee \nu}_s & \dd_1 \, b(S^{\ee \nu}_s \dd_1 \ast \eta_\ee)
  -  S^{\ee \nu}_s \dd_2 \, b(S^{\ee \nu}_s \dd_2 \ast \eta_\ee)\|_{L^{p/2}} 
  \\ &\leq 
  \| S^{\ee \nu}_s \dd_1 \, b(S^{\ee \nu}_s \dd_1 \ast \eta_\ee)
  - S^{\ee \nu}_s \dd_2 \, b(S^{\ee \nu}_s \dd_1 \ast \eta_\ee)\|_{L^{p/2}}
  + \|S^{\ee \nu}_s \dd_2 \, b(S^{\ee \nu}_s \dd_1 \ast \eta_\ee) 
   -  S^{\ee \nu}_s \dd_2 \, b(S^{\ee \nu}_s \dd_2 \ast \eta_\ee)\|_{L^{p/2}}
  \phantom{\int}
  \\
  & \hspace{-0.7cm} \stackrel{\text{H\"older},\eqref{e:v}}{\leq} 
   L \big\|  S^{\ee \nu}_s \dd_1 -  S^{\ee \nu}_s \dd_2 \big\|_{L^p}\, 
    \big\| S^{\ee \nu}_s \dd_1 \ast \eta_\ee \big\|_{L^p} +
      L  \big\|  S^{\ee \nu}_s \dd_2 \big\|_{L^p} 
      \big\| \big[S^{\ee \nu}_s \dd_1 
  -    S^{\ee \nu}_s \dd_2 \big]\ast \eta_\ee \big\|_{L^p}   \\
  & \hspace{-0.6cm} \stackrel{\text{Young}, \eqref{e:eta}}{\leq} L
   \big\|  S^{\ee \nu}_s \dd_1 -  S^{\ee \nu}_s \dd_2 
   \big\|_{L^p}\, 
    \big\| S^{\ee \nu}_s \dd_1 \big\|_{L^p}  +
    L \big\|  S^{\ee \nu}_s \dd_2 \big\|_{L^p} 
      \big\| S^{\ee \nu}_s \dd_1 
  -    S^\ee_s \dd_2  \big\|_{L^p}   \\
  & \hspace{-0.2cm} \stackrel{\eqref{e:hypstab}}{\leq}
   C(L, F)
   \big\|  S^{\ee \nu}_s \dd_1 -  S^{\ee \nu}_s \dd_2 \big\|_{L^p}. 
\end{split}
\end{equation}
We now introduce the value $\varpi$ by setting 
\begin{equation}
\label{e:varpi}
\varpi : = \sup \big\{ t \in [0, 1]: 
\;  \big\|  S^{\ee \nu}_s \dd_1 -  S^{\ee \nu}_s \dd_2 \big\|_{L^p} 
\leq 2  \big\|  \dd_1 -  \dd_2 \big\|_{L^p}
\; \text{for every $s \in [0, t]$} \big\}. 
\end{equation}
Note that 
\begin{equation}
\label{e:pezzo4}
  \big\|  S^{\ee \nu}_\varpi \dd_1 -  S^{\ee \nu}_\varpi \dd_2 \big\|_{L^p} 
=2  \big\|  \dd_1 -  \dd_2 \big\|_{L^p}.
\end{equation}
Also, by combining~\eqref{e:pezzo0},~\eqref{e:G1nu} and~\eqref{e:pezzo3} we get that 
\begin{equation}
\label{e:pezzo5}
\begin{split}
  \big\|  S^{\ee \nu}_\varpi \dd_1 -  S^{\ee \nu}_\varpi \dd_2 \big\|_{L^p} 
  & \leq  \big\|  \dd_1 -  \dd_2 \big\|_{L^p}
   +
  C(L, F) \big\|  \dd_1 -  \dd_2 \big\|_{L^p}
  \int_0^\varpi
  \| 
  \nabla G_\nu (\varpi-s, \cdot) \|_{L^q} 
  ds \\
  & \hspace{-0.5cm}\stackrel{\eqref{e:G1nu},\eqref{e:alpha}}{\leq}
  \big\|  \dd_1 -  \dd_2 \big\|_{L^p}
   +
  C(L, F, p, d, \nu) \big\|  \dd_1 -  \dd_2 \big\|_{L^p}
  \int_0^\varpi (\varpi -s)^\alpha  
  ds
  \\ & \leq  
  \big\|  \dd_1 -  \dd_2 \big\|_{L^p}
  [ 1 
   +
  C(L, F, p, d, \nu) \varpi^{\alpha +1}
  ]. 
  \end{split}
\end{equation}
By comparing~\eqref{e:pezzo5} with~\eqref{e:pezzo4} we get
$$
   2 
   \big\|  \dd_1 -  \dd_2 \big\|_{L^p} \leq \big[1 + 
   C(L, F, d, p, \nu)
    \varpi^{\alpha +1}\big] 
    \big\|  \dd_1 -  \dd_2 \big\|_{L^p}
$$
and this provides a lower bound on $\varpi$. 
 \end{proof}
We conclude this paragraph by establishing a uniform a-priori estimate on the growth of $\dd$. 
\begin{lemma}
\label{l:lpcontrol}
Assume that $\dd \in L^\infty \cap L^1$ and that 
\begin{equation}
\label{e:ellepdi}
  \| \dd \|_{L^p} \leq Q. 
\end{equation}
Then there is a constant $\theta = \theta (d, p,L, Q)>0$ such that 
\begin{equation}
\label{e:controlpdi}
  \| S^{\ee \nu}_t \dd \|_{L^p}
  \leq 2 Q, \quad \text{for every $t \in [0, \theta]$.}
\end{equation}
\begin{proof}
We set
$$
   \theta: = \sup \big\{ t \in [0, 1 ]: \| S^{\ee \nu}_s \dd \|_{L^p}
  \leq 2 Q, \; \text{for every $s \in [0, t]$} \big\}
$$
and we point out that 
\begin{equation}
\label{e:eqtheta}
     \| S^{\ee \nu}_\theta \dd \|_{L^p}
  = 2 Q. 
\end{equation}
To establish a lower bound on $\theta$ we use the Duhamel representation formula. We have 
\begin{equation*}
\begin{split}
  S^{\ee \nu}_\theta \dd   =
  \dd \ast G_\nu(\theta, \cdot) 
   -
  \int_0^\theta \int_{\R^d} 
  \nabla G_\nu (\theta-s, \cdot-y)  \cdot 
  \big[S^{\ee \nu}_s \! \dd  \ b(S^{\ee \nu}_s \dd \ast \eta_\ee)
\big] (y) dyds.
\end{split}
\end{equation*}
We use the Bochner  and  Young Theorems  and we get
$$
  \| S^{\ee \nu}_\theta \dd \|_{L^p} \leq 
  \| \dd  \|_{L^p} 
   +
  \int_0^\theta \|  
  \nabla G_\nu (\theta-s, \cdot)  \|_{L^q}
  \| S^{\ee \nu}_s \! \dd  \ b(S^{\ee \nu}_s \dd \ast \eta_\ee)\|_{L^{p/2}} ds, 
$$
provided $q:= p/(p-1)$. Next, by H\"older inequality, \eqref{e:v}, and since $s \leq \theta$ we infer that 
\begin{equation*}
\begin{split}
     \| S^{\ee \nu}_s \! \dd   \ b(S^{\ee \nu}_s \dd \ast \eta_\ee)\|_{L^{p/2}}
     & \leq 
      \| S^{\ee \nu}_s \! \dd  \|_{L^p} \| b(S^{\ee \nu}_s \dd \ast \eta_\ee)\|_{L^p}
     {\leq}
     L \| S^{\ee \nu}_s \! \dd  \|_{L^p}   
      \|S^{\ee \nu}_s \dd \ast \eta_\ee\|_{L^p}
      \leq  L \| S^{\ee \nu}_s \! \dd  \|^2_{L^p} {\leq}
      4 L Q^2. 
      \end{split}
\end{equation*}
We let $ \alpha > -1$ be as in~\eqref{e:alpha}. By \eqref{e:G1nu} and the above inequalities 
we infer that 
$$
     \| S^{\ee \nu}_\theta \dd \|_{L^p} \leq 
     Q + C(d,p,L) \theta^{\alpha+1} Q^2
$$
and by comparing the above inequality with~\eqref{e:eqtheta} we establish a lower bound on $\theta$. 
\end{proof}
\end{lemma}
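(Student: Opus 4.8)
The plan is to run a continuity (bootstrap) argument on the scalar function $t \mapsto \|S^{\ee\nu}_t\dd\|_{L^p}$, closing the estimate through the Duhamel representation formula for~\eqref{e:cpr} and the convolution inequalities already exploited in Sections~\ref{s:preliminary}--\ref{s:regdata}. First I would set
\[
   \theta := \sup\big\{\, t \in [0,1] \ :\ \|S^{\ee\nu}_s\dd\|_{L^p} \leq 2Q \ \text{ for every } s \in [0,t]\,\big\}.
\]
Since $\|S^{\ee\nu}_0\dd\|_{L^p} = \|\dd\|_{L^p} \leq Q < 2Q$ and, by Theorem~\ref{t:wp} together with the uniform $L^1$ and $L^\infty$ bounds and interpolation, the map $t \mapsto S^{\ee\nu}_t\dd$ is continuous into $L^p$, the set above is a nonempty closed subinterval of $[0,1]$; thus $\theta > 0$ and, in the case $\theta < 1$, one has the equality $\|S^{\ee\nu}_\theta\dd\|_{L^p} = 2Q$. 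It then suffices to produce a strictly positive lower bound on $\theta$ depending only on $d$, $p$, $L$ and $Q$ (the viscosity $\nu$ being kept frozen throughout this section).

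To bound $\theta$ from below I would use the Duhamel formula
\[
  S^{\ee\nu}_\theta\dd = G_\nu(\theta,\cdot)\ast\dd \ -\ \int_0^\theta \nabla G_\nu(\theta-s,\cdot)\ast\big[\, S^{\ee\nu}_s\dd\ b(S^{\ee\nu}_s\dd\ast\eta_\ee)\,\big]\,ds
\]
and estimate its $L^p$ norm by the Bochner and Young theorems, with $\|G_\nu(\theta,\cdot)\|_{L^1}=1$ and, for the integral term, the admissible exponent pair $q := p/(p-1)$ and $p/2$ (one has $\tfrac1q + \tfrac2p = 1 + \tfrac1p$, and $p/2 \geq 1$ by~\eqref{e:condizionip}). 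The nonlinear factor is controlled by the H\"older inequality, the Lipschitz bound~\eqref{e:v} (recall $b(0)=0$), and Young's inequality for the convolution with $\eta_\ee$:
\[
  \big\| S^{\ee\nu}_s\dd\ b(S^{\ee\nu}_s\dd\ast\eta_\ee) \big\|_{L^{p/2}} \leq L\,\|S^{\ee\nu}_s\dd\|_{L^p}\,\|S^{\ee\nu}_s\dd\ast\eta_\ee\|_{L^p} \leq L\,\|S^{\ee\nu}_s\dd\|_{L^p}^2 \leq 4LQ^2 \quad\text{for } s \leq \theta,
\]
while for the heat kernel~\eqref{e:nablaG}--\eqref{e:G1nu} give $\|\nabla G_\nu(\tau,\cdot)\|_{L^q} = C(d,q)(\nu\tau)^\alpha$ with $\alpha$ as in~\eqref{e:alpha}; the point is that here $\alpha = -(d+p)/(2p) \in (-1,0)$ precisely because $p>d$, so that $\int_0^\theta(\theta-s)^\alpha\,ds = \theta^{\alpha+1}/(\alpha+1)$ is finite and vanishes as $\theta\to 0^+$.

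Collecting these bounds, in the case $\theta < 1$ one gets
\[
  2Q = \|S^{\ee\nu}_\theta\dd\|_{L^p} \leq Q + C(d,p,L)\,\theta^{\alpha+1}\,Q^2,
\]
which forces $\theta^{\alpha+1} \geq 1/(C(d,p,L)Q)$, hence $\theta \geq \theta_0(d,p,L,Q)>0$; if instead $\theta = 1$ then~\eqref{e:controlpdi} already holds on all of $[0,1]$. Either way, taking the threshold in the statement to be $\min\{1,\theta_0\}$ concludes the proof. I do not expect a genuine obstacle: the two points needing a little care are the closedness and positivity of the set defining $\theta$ (which rely on the $L^p$-continuity of the flow, a by-product of Theorem~\ref{t:wp}) and the bookkeeping that the divergence form of~\eqref{e:cpr} costs exactly one derivative on the heat kernel, whose singularity $(\theta-s)^\alpha$ at $s=\theta$ is integrable thanks to $p>d$ — everything else merely repeats the Duhamel-plus-convolution scheme used earlier in the paper.
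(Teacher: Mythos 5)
Your proposal is correct and follows essentially the same route as the paper: the same definition of $\theta$ as a supremum, the same Duhamel representation estimated via the Bochner and Young theorems with the pair $q=p/(p-1)$, $p/2$, the same H\"older--Lipschitz--Young control of the nonlinear term by $4LQ^2$, and the same comparison $2Q \leq Q + C\,\theta^{\alpha+1}Q^2$ yielding the lower bound on $\theta$. The only differences are cosmetic refinements (you spell out the $L^p$-continuity ensuring $\theta>0$ and treat the case $\theta=1$ explicitly), which the paper leaves implicit.
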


\subsection{Conclusion of the proof of Theorem~\ref{t:convbadata}} 
\label{ss:concludiamo}
We first introduce some notation. First, we fix a parameter $0 < h < 1$.  We set 
\begin{equation}
\label{e:parametri}
    D: =  \| \bar u \|_{L^p}, \quad B := \| \bar u \|_{L^\infty}, \quad F := 4 D, \quad Q:= 2 D
\end{equation}
and choose a threshold $\xi = \xi(d, p, L, D, \nu, Q)$ in such a way that 
$$
    \xi : = \min \{ \sigma, \varpi, \theta \},
$$
where $\sigma $, $\varpi$ and $\theta$ are as in the statement of Lemma~\ref{l:duhamel1}, Lemma~\ref{l:estability} and Lemma~\ref{l:lpcontrol}, respectively. \\
{\sc Step 1:} we choose $\dd:= \bar u$ and the regularity parameter $\lambda$ in~\eqref{e:di} (depending only on on $p$, $\bar u$ and $h$) in such a way that 
$$
\| \dd_s \|_{L^p} =   \| \dd - \dd \ast \rho_\lambda \|_{L^p}\leq h <1.
$$
We establish convergence on the interval $[0, \xi]$. 
First we decompose $\bar u$ as in~\eqref{e:di}.  
Note that 
$$
   \| \dd_{r}\|_{L^p} \leq \| \dd \|_{L^p} \leq D, \quad 
   \| \dd_r  \|_{L^\infty} \leq \| \dd \|_{L^\infty}\leq B. 
$$
Next, we fix $t \in [0, \xi]$ and we introduce the following decomposition:
\begin{equation}
\label{e:decompo1}
\begin{split}
      \| S^{\ee \nu}_t \dd - S^{\nu}_t \dd \|_{L^p} 
      & \leq 
      {\| S^{\ee \nu}_t \dd - S^{\ee \nu}_t \dd_r \|_{L^p}}
       + 
       \| S^{\ee \nu}_t \dd_{r }- S^{ \nu}_t \dd_{ r} \|_{L^p} 
        +  \| S^{\nu}_t \dd_{r} - S^{ \nu}_t \dd \|_{L^p} =: {T_1} +{T_2} +{T_3}.
       \end{split} 
\end{equation}
To control the term $T_1$, we apply Lemma~\ref{l:duhamel1} and we infer that, if $\ee \leq 
\bar \ee (d, p, L, B,D, \nu, \lambda)$, then 
\begin{equation}
\label{e:tiuno}
      \| S^{\ee \nu}_t \dd - S^{\ee \nu}_t \dd_r  \|_{L^p} \leq 2 h. \end{equation}
To control the term $T_2$, we apply Theorem~\ref{t:convreg}. First, we point out that
$$
   \nabla \dd_r = \dd \ast \nabla \rho_\lambda \implies 
   \|  \nabla \dd_r \|_{L^p}
   \leq \| \bar u \|_{L^p} 
   \| \nabla \rho_\lambda \|_{L^1} = C(d,D, \lambda). 
$$
By applying Theorem~\ref{t:convreg} we arrive at 
\begin{equation}
\label{e:tidue}
  \| S^{\ee \nu}_t \dd_r - S^{ \nu}_t \dd_r \|_{L^p}
  \leq C (d, p, L, B, D, \nu,\lambda) \ee
  \leq h
\end{equation} 
provided that $\ee \leq \bar \ee (d, p, L, B, D, \lambda, \nu, h)$. 
Finally, to control the term $T_3$ we apply~\eqref{e:stability} and we get 
\begin{equation}
\label{e:titre}
\| S^{\nu}_t \dd_r - S^{ \nu}_t \dd \|_{L^p}
\leq C(d, p, L, B, \nu) 
\| \dd_r  - \dd \|_{L^p}
\leq C(d, p, L,B, \nu)  h. 
\end{equation}
By combining~\eqref{e:tiuno}, \eqref{e:tidue} and~\eqref{e:titre} with~\eqref{e:decompo1} we eventually get that 
 \begin{equation}
 \label{e:concludopasso2}
      \| S^{\ee \nu}_t \dd - S^{\nu}_t \dd \|_{L^p} 
       \leq 
      C(d, p, L, B, \nu)  h
\end{equation}
provided that $\ee \leq \bar \ee (d, p, L, B, D, \lambda, \nu, h)$. \\
{\sc Step 2:} we establish convergence on the interval $[\xi, 2 \xi]$. First, we fix $t \in [0, \xi]$ and we introduce the following decomposition:
\begin{equation}
\label{e:decompo2}
\begin{split}
\| S^{\ee \nu}_{t+\xi} \bar u - S^{\nu}_{t+\xi} \bar u 
\|_{L^p} & =
\| S^{\ee \nu}_{t}S^{\ee \nu}_{\xi} \bar u 
- S^{\nu}_{t} S^\nu_\xi \bar u 
\|_{L^p} \\ &
       \leq 
      \| S^{\ee \nu}_{t}S^{\ee \nu}_{\xi} \bar u 
      - S^{\ee \nu}_{t} S^{\nu}_{\xi} \bar u  \|_{L^p} 
       + 
       \| S^{\ee \nu}_{t}S^{\nu}_{\xi} \bar u -
        S^{ \nu}_{t}S^{\nu}_{\xi} \bar u  \|_{L^p} =:{S_1}+{S_2} .
\end{split}
\end{equation}
To control the term $S_1$ we apply Lemma~\ref{l:estability}. 
First, we set 
$$
    \dd_1 := S^{\ee \nu}_{\xi} \bar u, \qquad 
    \dd_2 : = S^{\nu}_{\xi} \bar u
$$
and we recall that $F = 4 \| \bar u \|_{L^p}$.
Now we want to show that~\eqref{e:hypstab} holds true: we do this by applying 
Lemma~\ref{l:lpcontrol}. First, we check that 
\begin{equation}
\label{e:primocontrollo}
   \| S^{\ee \nu}_t \dd_2 \|_{L^p} \leq F. 
\end{equation}
We recall that $Q= 2 \| \bar u \|_{L^p}$ and we point out that, owing to~\eqref{e:controlp}, we have 
$$
  \| \dd_2 \|_{L^p} \leq \| \bar u \|_{L^p}\leq Q.
$$
By applying Lemma~\ref{l:lpcontrol}, we get~\eqref{e:primocontrollo}. 
Next, by \eqref{e:concludopasso2} and \eqref{e:controlp} we point out that 
$$
   \| \dd_1 \|_{L^p} \leq \|  S^{\ee \nu}_{\xi} \bar u - S^{\nu}_{\xi} \bar u 
   \|_{L^p} + \| S^{\nu}_{\xi} \bar u \|_{L^p} {\leq}
   C(d, p, L, B, \nu)  h + \| \bar u \|_{L^p}
   \leq 2  \| \bar u \|_{L^p} = Q
$$
provided that $h$ is sufficiently small. By applying Lemma~\ref{l:lpcontrol}, we get 
$ \| S^{\ee \nu}_t \dd_1 \|_{L^p} \leq F$
and by recalling~\eqref{e:primocontrollo} we conclude that~\eqref{e:hypstab} is satisfied. By applying Lemma~\ref{l:estability} we conclude that 
$$
   S_1\stackrel{\eqref{e:decompo2}}{=}\| S^{\ee \nu}_{t}S^{\ee \nu}_{\xi} \bar u 
      - S^{\ee \nu}_{t} S^{\nu}_{\xi} \bar u  \|_{L^p} \leq 
   2 \| S^{\ee \nu}_{\xi} \bar u 
      - S^{\nu}_{\xi} \bar u  \|_{L^p}  
       \stackrel{\eqref{e:concludopasso2}}{\leq}
       C(d, p, L, B, \nu)  h,  
$$
provided that $\ee \leq \bar \ee (d, p, L, B, D, \lambda, \nu, h)$. 

We now control $S_2$, the second term in~\eqref{e:decompo2}. We set
$\dd: = S^\nu_\xi \bar u$
and we point out that 
$$
   \| S^\nu_\xi \bar u \|_{L^p} \leq D, \quad
  \| S^\nu_\xi \bar u \|_{L^\infty} \leq B
$$
owing to~\eqref{e:controlp}. By applying the same argument as in {\sc Step 1} we conclude that 
$$
   S_2 {=} \| S^{\ee \nu}_{t}S^{\nu}_{\xi} \bar u -
        S^{ \nu}_{t}S^{\nu}_{\xi} \bar u  \|_{L^p} \leq 
     C(d, p, L, B, \nu)  h   
$$
provided that $\ee \leq \bar \ee (d, p, L, B, D, \lambda, \nu, h)$. 
By recalling~\eqref{e:decompo2}, this establishes the convergence on the interval $[\xi, 2 \xi]$. \\
{\sc Step 3:} by iterating the argument at {\sc Step 2} a finite number of times we can prove that 
$$
  \| S^{\ee \nu}_{t} \bar u - S^{\nu}_{t} \bar u 
\|_{L^p} \leq C(d, p, L, B, \nu)  h, \quad \text{for every $t \in [0, 1]$}  
$$
provided that $\ee \leq \bar \ee (d, p, L, B, D, \lambda,  \nu, h)$. This establishes the strong $L^p$ convergence and concludes the proof of Theorem~\ref{t:convbadata}. \qed
\section{Counterexamples}
\label{s:ce}
In this section we focus on the family of Cauchy problems in one space dimension 
\begin{equation}
\label{e:cpce}
\left\{
\begin{array}{ll}
\partial_t u_\ee + \partial_x \big[ u_\ee \, u_\ee \ast \eta_\ee\big] =0 \\
u_\ee (0, \cdot)= \bar u,
\end{array}
\right.
\end{equation}  
which is exactly~\eqref{e:nlcpr} in the case when $d=1$ and  $b(u) =u$. When $\ee \to 0^+$, the Cauchy problem in~\eqref{e:cpce} \emph{formally} reduces to the Cauchy problem for the Burgers' equation 
\begin{equation}
\label{e:cpburgers}
\left\{
\begin{array}{ll}
\partial_t u + \partial_x \big[ u^2 \big] =0 \\
u (0, \cdot)= \bar u.  
\end{array}
\right.
\end{equation} 
In this section we provide three explicit counterexamples showing that, in general, $u_\ee$ does not converge to the entropy admissible solution $u$. 
\subsection{A counterexample with sign-changing data and symmetric kernels} We begin by stating and proving our first counterexample. 
\label{ss:ce1}
\begin{counterexample}
\label{l:ce1}
Assume that $\eta_\ee$ satisfies~\eqref{e:eta} and~\eqref{e:etaee} and that $\eta$ is an even function, namely $\eta(x) = \eta(-x)$ for every $x$.
Assume furthermore that the initial datum $\bar u  \in BV(\R)$ is an odd function, namely $\bar u(x) = - \bar u(-x)$ for a.e.~$x$, and such that
\begin{equation}
\label{e:uzero1}
    \bar u(x) : = 
    \left\{
    \begin{array}{ll}
    1 & -1 < x <0 \\
    -1 & 0 < x < 1 \\
    0 & |x| > 2. \\ 
    \end{array}
    \right. 
 \end{equation}
 Let $u_\ee$ be the solution of~\eqref{e:cpce} and $u$ be the entropy admissible solution of~\eqref{e:cpburgers}.
 Then
 \begin{equation}
\label{e:ideace12-new}
    \int_{-\infty}^{0} u (t, x) dx <   \int_{-\infty}^{0} \bar u(x) dx = \int_{-\infty}^{0} u_\ee (t, x) dx , 
    \quad \text{for every $t \in [0, 1/4[$.}
\end{equation}
In particular, the family $\{ u_\ee \}_{\ee >0}$ does not converge to $u$, not even in the weak topology of $L^p$, $p \geq 1$, in the weak$^*$ topology of $L^\infty$, or up to subsequences.
\end{counterexample}  

The precise meaning of the last statement is the following: for every $p \ge 1$ and $T>0$ the statement ``there is a sequence $\ee_k$ such that $\ee_k \to 0^+$  and $u_{\ee_k} \rightharpoonup u$ in $L^p ([0, T] \times \R)$" is false; the statement ``there is a sequence $\ee_k$ such that $\ee_k \to 0^+$  and  $u_{\ee_k} \weaks u$ in $L^\infty ([0, T] \times \R)$" is also false.
The basic idea underpinning Counterexample~\ref{l:ce1} is, very loosely speaking, the following: one can show that for $t$ small enough, the entropy admissible solution of the Cauchy problem~\eqref{e:cpburgers},~\eqref{e:uzero1} has a steady shock at $x=0$ between the values $1$ (on the left) and $-1$ (on the right). By using the formal computation 
$$
 \frac{d}{dt} \int_{-\infty}^{0} u (t, x) dx \stackrel{\eqref{e:cpburgers}}{=}
     - \int_{-\infty}^{0} \partial_{x} [u^2] (t, x)dx =
          - u^2 (0^-) = -1 <0
$$
we infer the first inequality in \eqref{e:ideace12-new}. On the other hand, we can show that the solution $u_\ee$ of~\eqref{e:cpce},~\eqref{e:uzero1} is odd. Since the function $\eta_\ee$ is even, this implies that $u_\ee \ast \eta_\ee =0$ at $x=0$ and hence that 
$$
   \frac{d}{dt} \int_{-\infty}^{0} u_\ee (t, x) dx \stackrel{\eqref{e:cpce}}{=}
     - \int_{-\infty}^{0} \partial_{x} [u_\ee \ u_\ee \ast \eta_\ee ] (t, x)dx =0, 
     $$
which in turn implies the equality in \eqref{e:ideace12-new}.
By~\eqref{e:ideace12-new} and doing some more work 
one can eventually rule out weak convergence. We now give the rigorous proof of Counterexample~\ref{l:ce1}. 
\begin{proof}[Proof of Counterexample~\ref{l:ce1}] 
We proceed according to the following steps. \\
{\sc Step 1:} we investigate the structure of the entropy solution $u$. 
First, we collect some properties of $u$:
\begin{enumerate}
\item[a)] $u \in C^0 ([0, + \infty[; L^1 (\R))$.
\item[b)] Since $\| \bar u \|_{L^\infty} \leq 1$, then by the maximum principle $\| u (t, \cdot) \|_{L^\infty} \leq 1$ for every $t \ge 0$.
\item[c)] Since $\bar u \in BV (\R)$, then $u (t, \cdot) \in BV(\R)$ for every $t \ge 0$. 
\item[d)] A  $0$-speed shock is created at $t=0$ at the origin $x=0$. Owing to the finite propagation speed, this 
shock will survive for some time. More precisely,
 we have 
\begin{equation}
\label{e:traccia}
  u(t, x) = 
  \left\{
  \begin{array}{ll}
  1 & \text{for a.e $x \in ]-1/2, 0[$} \\
  -1 & \text{for a.e $x \in ]0, 1/2[$\,,} \\ 
    \end{array}
  \right. \quad 
\text{for every $t \in [0, 1/4]$}. 
\end{equation} 
\item[e)] Owing to the finite propagation speed and to the fact that $\bar u = 0$ if $|x|>2$, we have $u(t, x)=0$ for a.e. $|x| \ge 3$ and for every $t \in [0, 1/4]$.  
\end{enumerate}
We now want to show that 
\begin{equation}
\label{e:noconv} 
    \int_{-4}^0 u (1/4, x)  dx = 
    \int_{-4}^0 \bar u (x) dx - \frac{1}{4}.
\end{equation}
We can \emph{formally} obtain~\eqref{e:noconv} by pointing out that 
$$
    \frac{d}{dt} \int_{-4}^{0} u (t, x) dx \stackrel{\eqref{e:cpburgers}}{=}
     - \int_{-4}^{0} \partial_{x} [u^2] (t, x)dx =
          - u^2 (t, 0^-) + u^2 (t, -4) \stackrel{\mathrm{d), \ e)}}{=} -1
$$
 and by integrating with respect to time.  
We now sketch a rigorous argument to justify~\eqref{e:noconv}. First, we point out that $u$ is a distributional solution of~\eqref{e:cpburgers}, which amounts to say that 
\begin{equation}
\label{e:distrsol}
\int_0^{+ \infty}  \! \! \! \int_\R   u \partial_t \varphi \, dx dt + 
\int_0^{+ \infty} \! \! \!  \int_\R u^2 \partial_x  \varphi \, dx dt
+ \int_\R \varphi(0, \cdot ) \bar u \, dx =0
\end{equation}
for every $\varphi \in C^\infty_c (\R^2)$. We now introduce the sequence of functions $\{ \chi_n \} \subseteq C^\infty_c (\R)$ such that
\begin{equation}
\label{e:chienne}
       \chi_n (x) = 
       \left\{ 
       \begin{array}{ll}
       1 & -4 + 1/n \leq x \leq -1/n \\
       0 & x \leq -4 \; \text{or} \; x \ge 0. \\
       \end{array}
      \right. 
\end{equation}
As a matter of fact, $\chi_n$ is an approximation of the characteristic function of 
$[-4, 0]$. We fix an arbitrary $\theta \in C^\infty_c (]0, 1/4[)$, we plug $\varphi_n(t, x) : = \chi_n (x) \theta (t)$ as a test function in~\eqref{e:distrsol} and we point out that 
\begin{equation*}
\begin{split}
    \int_0^{+ \infty} \! \! \!  \int_\R u^2 \partial_x  \varphi_n \, dx dt & =
    \int_0^{1/4} \! \theta (t)  
    \int_\R u^2 (t, x) \chi_n'(x) dx dt \\ &
    \stackrel{\eqref{e:chienne}}{=}
    \int_0^{1/4} \! \theta (t)  
    \int_{-4}^{-4 + 1/n} \! \! 
    u^2 (t, x) \chi_n'(x) dx dt +
    \int_0^{1/4} \! \theta (t)  
    \int_{-1/n}^{0} \! \! 
    u^2 (t, x) \chi_n'(x) dx dt   \\ &
    \stackrel{\eqref{e:traccia},\, \mathrm{e)}}{=}
     \int_0^{1/4} \! \theta (t)  
    \int_{-4}^{-4 + 1/n} \! \! 
    0 \cdot  \chi_n'(x) dx dt +
    \int_0^{1/4} \! \theta (t)  
    \int_{-1/n}^{0} \! \! 
    1 \cdot\chi_n'(x) dx dt \\ & 
     \stackrel{\eqref{e:chienne}}{=}
     \int_0^{1/4} \! \theta (t) (-1) dt . 
\end{split}
\end{equation*}
Next, we let $n \to + \infty$ in the other term in~\eqref{e:distrsol} and we 
eventually arrive at 
$$
  \int_0^{1/4} \! \!  \theta' (t)  \int_{-4}^0   u (t, x) dx  \, dt + \int_0^{1/4} \theta(t)
  (-1) dt = 0.  
$$
Owing to the arbitrariness of $\theta$, this implies that the continuous function 
\begin{equation}
\label{e:continuita}
   t \mapsto \int_{-4}^0   u (t, x) dx 
\end{equation}
has distributional derivative equal to $-1$. This implies that the above function is actually absolutely continuous and, owing to the Fundamental Theorem of Calculus, we get~\eqref{e:noconv}.  

Since we will need it in the following, we also point that, since the map in~\eqref{e:continuita} is continuous, then~\eqref{e:noconv} implies that there is 
$h>0$ such that 
$$
    \int_{1/4 - h}^{1/4+ h} \int_{-4}^0   u (t, x) dx dt \leq 
    \int_{1/4 - h}^{1/4+ h} \left( \int_{-4}^0   \bar u \ dx - \frac{1}{8} \right) dt =
    2h \int_{-4}^0  \bar u \ dx - \frac{h}{4}. 
$$
In other words, if we define $E$ by setting 
\begin{equation}
\label{e:insieme}
    E : = \big\{ (t, x): \, t \in [ 1/4-h, 1/4 +h], \; x \in [-4, 0]  \big\}
\end{equation}
and we denote by $\mathbf{1}_E$ the characteristic function of $E$, then 
\begin{equation}
\label{e:nebbia}
       \int_0^{\infty} \! \! \int_\R \mathbf{1}_E \,u \, dx dt \leq 
       2h \int_{-4}^0   \bar u \ dx - \frac{h}{4}. 
\end{equation}
{\sc Step 2:} we show that the distributional solution $u_\ee$ of~\eqref{e:cpce} is odd, namely that, for a.e. $(t, x) \in \R^+ \times \R$, $u(t, x) = - u(t, -x)$. 
We set 
$
    v_\eps(t, x) : = - u_\eps (t, -x).
$ 
If we can prove that $v_\ee$ is also a distributional solution of the Cauchy problem~\eqref{e:cpce}, then by the uniqueness part of Proposition~\ref{p:exunice} we get that for every $t \ge 0$ it holds $v_\ee (t, x)= u_\ee(t, x)$ for a.e. $x$, namely that $u_\ee$ is an odd function.

To show that $v_\ee$ is a distributional solution of~\eqref{e:cpce}, we first observe that by using the fact that 
$\eta_\eps$ is even and making the change of variables $z = - y$ we get 
\begin{equation}
\label{e:oddcon}
    \big(v_\eps \ast \eta_\eps \big) (t, x)   = 
    - \int_\R u_\eps (t, -x + y ) \eta_\eps (y) dy = 
    - \int_\R u_\eps (t, -x - z) \eta_\eps (z) dz  =
    - \big( u_\eps \ast \eta_\eps \big) (t, - x). 
\end{equation}
Next, we fix $\varphi \in C^\infty_c ( \R^2)$, we set $\phi (t, x) : = - \varphi(t, -x)$ and we obtain 
\begin{equation*}
\begin{split} 
   & \int_0^{+ \infty}  \! \! \! \int_\R   v_\eps \partial_t \varphi \, dx dt + 
\int_0^{+ \infty} \! \! \!  \int_\R v_\eps (v_\eps \ast \eta_\eps) \partial_x  \varphi \, dx dt
+ \int_\R \varphi(0, \cdot ) \bar u \, dx = [ z = -x  ] \\
& = 
 \int_0^{+ \infty}  \! \! \! \int_\R (- u_\eps)  (- \partial_t \phi ) dz dt + 
 \int_0^{+ \infty} \! \! \!  \int_\R (-u _\eps) (- u_\eps \ast \eta_\eps) ( \partial_x  \phi)  \, 
 dz dt 
 +  \int_\R (- \phi(0, \cdot ))(- \bar u ) dz 
  =0. 
\end{split}
\end{equation*}
To establish the last equality we have used the fact that $u_\ee$ is a distributional solution of~\eqref{e:cpce}. The above chain of equalities states that $v_\ee$ is a distributional solution of~\eqref{e:cpce} and hence concludes {\sc Step~2}. \\
{\sc Step 3:} we show that 
\begin{equation}
\label{e:ancheufazero}
       u_\ee (t, x) =0, \quad \text{for a.e. $|x|\ge 2$ and every $t\ge0$ and $\ee >0$}. 
\end{equation}
We note that 
 $u_\ee$ is a distributional solution of the Cauchy problem 
\begin{equation}
\label{e:kruzkov}
\left\{
\begin{array}{ll}
    \partial_t u_\ee + \partial_x \big[ u_\ee g_\ee \big] =0 \\
    u_\ee(0, \cdot) = \bar u  \\
\end{array}
\right.
\end{equation}
provided that the vector field $g_\ee$ is defined as $g_\ee (t, x) : = u_\ee \ast \eta_\ee$. Since the vector field $g_\ee$ is smooth, then we can apply the method of characteristics. We term $X(t, x)$ the characteristic curve solving the Cauchy problem 
\begin{equation}
\label{e:caratteristiche}
  \left\{
  \begin{array}{ll}
  \displaystyle{\frac{dX}{dt} }= g_\ee (t, X) \\
  \\
  X(0, x) = x . \\
  \end{array}
  \right.
\end{equation}
Recall that $u_\ee$ is an odd function by {\sc Step 2}. Since $\eta_\ee$ is an even function by assumption, by arguing as in the chain of equalities~\eqref{e:oddcon} we obtain that $u_\ee \ast \eta_\ee$ is an odd function. Since it is also smooth, we eventually conclude that  
\begin{equation}
\label{e:fazero}
  g_\ee(t, 0) = u_\ee \ast \eta_\ee(t, 0) = 0, \quad \text{for every $t \ge 0$.}
\end{equation}
This means that $X(t, 0) \equiv 0$ and, since~\eqref{e:caratteristiche} has a unique solution, implies that the characteristic curves \emph{cannot cross} the $t$ axis. Since $\bar u(x) \ge 0$ if $x \leq 0$ and $\bar u(x) \leq 0$ if $x \ge 0$, this in turn implies that 
\begin{equation}
\label{e:segno}
     \text{for every $t \ge 0$}, \; u_\ee (t, x) \ge 0 \; \text{for a.e. $x <0$ and} \;  
     u_\ee (t, x) \leq 0 \; \text{for a.e. $x >0$.}
\end{equation}
This implies that, if $x \leq -2$, then $x + \ee \leq 0$ and hence 
\begin{equation}
\label{e:versodestra}
         g_\ee (t, x) = u_\ee \ast \eta_\ee (t, x) = 
         \int_{x - \ee}^{x+ \ee} u_\ee (y) \eta_\ee (x-y) dy 
         \stackrel{\eqref{e:v},\eqref{e:segno}}{\ge}0 .
\end{equation}  
If $x_1 < x_2$, then $X(t, x_1) < X(t, x_2)$ for every $t \ge 0$ (to see this, we use again the fact that the solution of~\eqref{e:caratteristiche} is unique). By recalling~\eqref{e:versodestra}, this implies that 
$$
    x \ge - 2 \implies X(t, x) \ge - 2 \; \text{for every $t \ge 0$}
$$
and hence that 
$$
    \text{for every $t \ge 0$,} \ \; X(t, x) <  -2 \implies  x < -2  . 
$$
Since $\bar u(x) =0$ for a.e. $x \leq 2$, this eventually implies that $u_\ee(t, x) =0$ for every $x \leq - 2$. Since the function $u_\ee$ is odd, this establishes~\eqref{e:ancheufazero}. \\
{\sc Step 4:} we conclude the proof. Recall that the set $E$ is defined as in~\eqref{e:insieme} for a suitable $h$ and assume that we have shown that 
\begin{equation}
\label{e:piove}
    \int_0^{+\infty} \! \! \! \int_\R \mathbf{1}_E u_\ee  dx dt = 
     2 h \int_{-4}^0  \bar u \ dx. 
\end{equation}
Since the function $\mathbf{1}_E \in L^p (\R^+ \times \R)$, for every $p \in [1, + \infty]$, then by comparing~\eqref{e:piove} and~\eqref{e:nebbia} we rule out the possibility that $u_\ee$ converges weakly or weakly$^\ast$ to $u$.  
We are thus left with establishing~\eqref{e:piove}. To this end, we first use the formal computation 
\begin{equation}
\label{e:formal}
   \frac{d}{dt} \int_{-4}^0 u_\ee (t, x) dx \stackrel{\eqref{e:cpce}}{=} 
   - \int_{-4}^0 \partial_x \big[ u_\ee (u_\ee \ast \eta_\ee) \big] 
   (t, x) dx = u_\ee (u_\ee \ast \eta_\ee) (t, -4)  - u_\ee (u_\ee \ast \eta_\ee) (t, 0) 
   \stackrel{\eqref{e:ancheufazero},\eqref{e:fazero}}{=} 0.
\end{equation}
This implies that 
$$
    \int_0^{+\infty} \! \! \! \int_\R \mathbf{1}_E u_\ee  dx dt=
    \int_{1/4-h}^{1/4+h}  \! \! \! \int_{-4}^0
     u_\ee  dx dt = \int_{1/4-h}^{1/4+h}
     \! \! \! \int_{-4}^0
     \bar u(x)  dx dt = 2 h \int_{-4}^0
     \bar u (x)  dx,
$$
namely~\eqref{e:piove}. 
 To provide a rigorous justification of~\eqref{e:formal} one can argue as in {\sc Step 1}. This concludes the proof of the lemma.  
\end{proof}
\subsection{A counterexample with positive data and asymmetric kernels}
\label{ss:ce2}
 This paragraph aims at establishing the following lemma, which rules  out also the possibility that $u_{\ee_k}$ weakly converges to a distributional, not necessarily entropy admissible, solution of~\eqref{e:cpburgers}. 
\begin{counterexample}
\label{l:ce2}
Assume that $\eta_\ee$ satisfies~\eqref{e:eta} and~\eqref{e:etaee} and moreover that 
\begin{equation}
\label{e:asymmetric}
  \eta(x) = 0, \quad \text{for every $x \ge 0$.}
\end{equation}
Let $\bar u$ be given by 
\begin{equation}
\label{e:u0ce3}
\bar u(x) =
\left\{
\begin{array}{ll}
1 & -1<x < 0 \\
0 & \text{otherwise.} \\
\end{array}
\right.
\end{equation}
Let $u_\ee$ be the solution of the Cauchy problem~\eqref{e:cpce}, \eqref{e:u0ce3} and $u$ be the entropy admissible solution of~\eqref{e:cpburgers}, \eqref{e:u0ce3}. Then
\begin{enumerate}
\item the family of distributional solutions $\{ u_\ee \}_{\ee >0}$ does not converge to $u$, not even in the weak topology of $L^p$, $p \geq 1$, in the weak$^*$ topology of $L^\infty$, or up to subsequences;
\item more in general, any weak limit $w$ of a subsequence of $\{ u_\ee \}_{\ee >0}$ (in the weak topology of $L^p$, $p \geq 1$, in the weak$^*$ topology of $L^\infty$) cannot be a $L^2_{\loc}$ distributional (not necessarily entropy admissible) solution of~\eqref{e:cpburgers}.
\end{enumerate}
\end{counterexample}

The basic idea underpinning Counterexample~\ref{l:ce2} 
 is, very loosely speaking,  the following. Owing to~\eqref{e:asymmetric}, 
the convolution $u_\ee \ast \eta_\ee$ evaluated at the point $x$ only depends on the values of $u_\ee$ \emph{on the right hand side} of $x$. Owing to the particular structure of the initial datum $\bar u$ this implies that $u_\ee \ast \eta_\ee(0, 0)=0$ and hence that the characteristic line of the velocity field  $u_\ee \ast \eta_\ee$ starting at $x=0$ has zero initial speed. Then, one can show that the speed is identically zero:
this  implies that the characteristic lines coming from the half line $x<0$ cannot cross the axis $x=0$, and hence that no mass can enter the half line $x>0$. 
In conclusion, $u_\ee (t, x) = 0$ for a.e. $x>0$. Notice that this last equality could be shown also by noticing that the approximating sequence in the construction of $u_\ee$ in \cite[\S~5]{CLM} enjoys the same property.

On the other hand, the entropy admissible solution of~\eqref{e:cpburgers}, \eqref{e:u0ce3} is explicit and not identically $0$ for $x>0$. With some more work, one can show that any \emph{distributional} solution of~\eqref{e:cpburgers},~\eqref{e:u0ce3} is not identically $0$ for $x>0$. This allows to rule out weak convergence to a distributional solution. We now make the previous argument rigorous.
\begin{lemma}
\label{l:suppuee}
Assume that $\eta$ and $\eta_\ee$ satisfy~\eqref{e:eta},~\eqref{e:etaee} 
and~\eqref{e:asymmetric} and let $\bar u$ be as in~\eqref{e:u0ce3}. Then 
\begin{equation}
\label{e:suppuee}
\text{for every $t \ge 0$, $u_\ee (t, x) =0$ for a.e. $x <-1$ and a.e. $x >0$.
}
\end{equation}
\end{lemma}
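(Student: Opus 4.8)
The plan is to argue by the method of characteristics, exactly as in Step~3 of the proof of Counterexample~\ref{l:ce1}, but now exploiting the one-sided support of $\eta$. Rewrite the first line of~\eqref{e:cpce} as the linear continuity equation $\partial_t u_\ee + \partial_x[u_\ee\,g_\ee]=0$ with $g_\ee:=u_\ee\ast\eta_\ee$. By Proposition~\ref{p:exunice} we have $u_\ee\in L^\infty_{\mathrm{loc}}([0,+\infty[;L^\infty)\cap C^0([0,+\infty[;L^1)$, so $g_\ee$ is bounded, continuous in $(t,x)$ and Lipschitz in $x$ uniformly in $t$ (with $\|\partial_x g_\ee(t,\cdot)\|_{L^\infty}\le\|\bar u\|_{L^1}\|\eta_\ee'\|_{L^\infty}$). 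Hence the characteristic ODE $\dot X(t,x)=g_\ee(t,X(t,x))$, $X(0,x)=x$, is well posed, and for each $t$ the map $X(t,\cdot)$ is an increasing homeomorphism of $\R$; since $u_\ee$ is \emph{the} distributional solution (Proposition~\ref{p:exunice}), it coincides with the Lagrangian one, i.e.\ $u_\ee(t,\cdot)\,\Leb^1=X(t,\cdot)_\#(\bar u\,\Leb^1)$. In particular $u_\ee(t,\cdot)\ge0$ a.e., and $\mathrm{supp}\,u_\ee(t,\cdot)\subseteq X(t,[-1,0])=[X(t,-1),X(t,0)]$.

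Next I would dispose of the left endpoint. Since $u_\ee\ge0$ and $\eta_\ee\ge0$, we have $g_\ee\ge0$ everywhere, so each curve $t\mapsto X(t,x)$ is nondecreasing; thus $X(t,x)\ge x$ for all $t$, and by monotonicity of $X(t,\cdot)$ we get $X(t,x)\ge X(t,-1)\ge-1$ whenever $x\ge-1$. Combined with the support inclusion above, this already gives $u_\ee(t,x)=0$ for a.e.\ $x<-1$.

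The heart of the matter is the right endpoint. Set $\bar x(t):=X(t,0)$, which is $C^1$, nondecreasing, with $\bar x(0)=0$. The key observation is that $g_\ee(t,y)=0$ for \emph{every} $y\ge\bar x(t)$: indeed $\eta_\ee(y-z)\ne0$ forces $z\in[y,y+\ee]$ (as $\eta_\ee$ is supported in $[-\ee,0]$), while $\mathrm{supp}\,u_\ee(t,\cdot)\subseteq[X(t,-1),\bar x(t)]$ forces $z\le\bar x(t)$ in the integral; hence for $y>\bar x(t)$ the integrand of $g_\ee(t,y)=\int_\R u_\ee(t,z)\eta_\ee(y-z)\,dz$ vanishes identically, and for $y=\bar x(t)$ it is carried by the single point $\{\bar x(t)\}$, which is $\Leb^1$-negligible. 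Crucially this observation does not use $\bar x(t)=0$, so there is no circularity. Evaluating it at $y=\bar x(t)$ yields $\dot{\bar x}(t)=g_\ee(t,\bar x(t))=0$ for every $t$, hence $\bar x\equiv0$. Therefore $\mathrm{supp}\,u_\ee(t,\cdot)\subseteq[X(t,-1),0]\subseteq[-1,0]$, which establishes both halves of~\eqref{e:suppuee}.

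The main obstacle is not conceptual but one of rigor: justifying the Lagrangian (pushforward) representation $u_\ee(t,\cdot)\,\Leb^1=X(t,\cdot)_\#(\bar u\,\Leb^1)$ for a merely bounded distributional solution. This is, however, standard — it is the DiPerna--Lions/Ambrosio theory for continuity equations whose velocity field is Lipschitz (here even $C^\infty$) in space, and it is precisely the tool already invoked in Step~3 of the proof of Counterexample~\ref{l:ce1}; alternatively one may argue directly with the smooth flow of $g_\ee$ and use the uniqueness statement of Proposition~\ref{p:exunice} to identify $u_\ee$ with the transported datum. Beyond that, the only delicate point is the bookkeeping at the single point $\{\bar x(t)\}$ and the remark that the proof of ``$g_\ee(t,y)=0$ for $y\ge\bar x(t)$'' is logically prior to, and independent of, the conclusion $\bar x\equiv0$.
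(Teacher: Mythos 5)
Your proposal is correct and follows essentially the same route as the paper: both proofs reduce to the characteristics of the smooth field $g_\ee=u_\ee\ast\eta_\ee$, observe that the one-sided support of $\eta_\ee$ makes $g_\ee(t,y)$ depend only on the values of $u_\ee(t,\cdot)$ to the right of $y$, and deduce first that $g_\ee$ vanishes on $\{y\ge X(t,0)\}$ (without assuming $X(t,0)=0$) and then that $X(t,0)\equiv 0$. The only difference is presentational — you phrase the support inclusion via the forward pushforward $u_\ee(t,\cdot)\,dx=X(t,\cdot)_\#(\bar u\,dx)$, while the paper argues with backward characteristics and propagation of the value $0$ — and your explicit remark on the non-circularity of the key step matches the paper's own care on that point.
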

\begin{proof}
We argue according to the following steps. \\
{\sc Step 1:} we show that $u_\ee (t, x) =0$ for a.e. $x <-1$. 
We use the method of characteristics: note that $u_\ee$ is a distributional solution of the continuity equation~\eqref{e:kruzkov} provided the vector field $g_\ee$ is given by $g_\ee: = u_\ee \ast \eta_\ee$. Since $\bar u\ge0$, then $g_\ee(t, x) \ge 0$ for every $(t, x)$. This implies that, for every $t \ge 0$ and every $x < -1$, the characteristic line $Y_t(s, x)$ solving the (backward) Cauchy problem 
\begin{equation}
\label{e:backward}
    \left\{
    \begin{array}{ll}
    \displaystyle{\frac{d Y_t}{ds}} = g_\ee (s, Y_t) \\
    \phantom{ciao} \\
    Y_t(t, x)=x \\
    \end{array}
     \right.
\end{equation}
satisfies $Y_t(0,x) < -1$ and hence $\bar u (Y_t(0,x)) =0$. Since the value $0$ is propagated along the characteristic lines of the continuity equation, then $u_\ee(t, x)=0$.  \\
{\sc Step 2:} we regard again $u_\ee$ as the solution of the continuity equation~\eqref{e:kruzkov} and  
we term $X$ the characteristic line solving the (forward) Cauchy problem~\eqref{e:caratteristiche}. We claim that
\begin{equation}
\label{e:icstzero-new}
X(t, x) =x \quad \text{for every $t \ge 0$, $x\ge 0$}.
\end{equation}
Indeed, by the spatial smoothness of the vector field $u_\ee \ast \eta_\ee$, 
the characteristic lines ``cannot cross" the curve $X(t, 0)$; in particular for any $t>0$ and $x>X(t, 0)$ we have $Y_t(0,x)>0$. Hence
\begin{equation}
\label{eqn:u-ee0}
u_\ee(t, x)=0 \qquad \mbox{for any } t>0, x\geq X(t,0).
\end{equation}
Since $\eta_\ee$ satisfies~\eqref{e:eta} and~\eqref{e:etaee}, for any $x\in \R$ the quantity $u_\ee \ast \eta_\ee(x)$ is an average, weighted with $\eta_\ee$, of the values of $u_\ee(t,\cdot)$ on the right of $x$. From \eqref{eqn:u-ee0}, we deduce that
\begin{equation}
\label{eqn:u-ee0eta}
u_\ee \ast \eta_\ee(t,  x)=0 \qquad \mbox{for any } t>0, x \geq X(t,0).
\end{equation}
From \eqref{eqn:u-ee0eta} applied to $x= X(t,0)$ and \eqref{e:caratteristiche} with $x=0$, we deduce that $X(t,0)= 0$ for any $t>0$; applying again \eqref{eqn:u-ee0eta}  with this further information, we deduce \eqref{e:icstzero-new}.

Since the value $0$ is propagated along the characteristic lines of the continuity equation, which in turn are constant for any $x\geq 0$ thanks to  \eqref{e:icstzero-new}, we have shown that $u_\ee(t, x)=0$ for any $x\geq 0$, concluding the proof of \eqref{e:suppuee}.
\end{proof}
\begin{proof}[Proof of Counterexample~\ref{l:ce2}(1)]
First, we point out that, if $\bar u$ is given by~\eqref{e:u0ce3}, then the entropy admissible solution of the Cauchy problem~\eqref{e:cpburgers} is 
\begin{equation}
\label{e:limitece3}
u(t, x)
= 
\left\{
\begin{array}{ll}
0 & x \leq -1 \mbox{ or }x \ge t \\
\displaystyle{\frac{x+1}{2t}} & - 1 \leq x \leq 2t-1 \\
1 & 2t -1 \leq x \leq t , \\
\end{array}
\right. \quad \text{for a.e. $(t, x) \in [0,1 ] \times \R$.}
\end{equation}
Assume by contradiction there is a sequence $\{ \ee_k \}$ such that $u_{\ee_k}$ weakly converges to $u$. 
We use as a test function the characteristic function of the set $E:=[0, 1/2] \times [0, 1]$. Since 
$$
   \int_{\R^+ \times \R} u_{\ee_k} \mathbf{1}_E \ dx dt 
   \stackrel{\text{Lemma~\ref{l:suppuee}}}{=}0, 
   \qquad 
   \int_{\R^+ \times \R} u \ \mathbf{1}_E \ dx dt
   \stackrel{\eqref{e:limitece3}}{=}
   \int_0^{1/2} \! \! \int_0^t 1 \ dx  dt = \frac{1}{8}, 
$$
then we find a contradiction. 
\end{proof}
The proof of Counterexample~\ref{l:ce2}(2) is based on the following result, which could be generalized to Young measure solutions of the Cauchy problem~\eqref{e:cpburgers} (we refer to~\cite{Dafermos:book} for an extended discussion on Young measures and their applications to nonlinear conservation laws). 
\begin{lemma}
\label{l:ennio}
Let $a,b\in \R$, $a<b$, and let $u \in L^2_{\mathrm{loc}} ([0, 1]
 \times \R
)$ be a nonnegative, distributional solution of the Cauchy problem~\eqref{e:cpburgers} compactly supported in $[0,1]\times (a,b)$. 
Then the baricenter of $u$ is a nondecreasing function and 
\begin{equation}
\label{e:nono}
   \int_{a}^{b} \! 
  x u(t, x) dx \ge \Big(\int_{a}^b \bar u\Big)^2 t +   \int_{a}^{b} \! \! \!
  x \bar u(x) dx.
\end{equation}
\end{lemma}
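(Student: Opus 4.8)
The plan is to reduce everything to the evolution of the zeroth and first spatial moments of $u$. For $t\in[0,1]$ put $m(t):=\int_{\R}u(t,x)\,dx$ and $B(t):=\int_{\R}x\,u(t,x)\,dx$. Since $u$ vanishes outside $[0,1]\times K$ for some compact $K\subset(a,b)$ and $u\in L^2_{\loc}$, for a.e.\ $t$ the slice $u(t,\cdot)$ lies in $L^1\cap L^2$, so $m(t)$, $B(t)$ and $\int_{\R}u^2(t,\cdot)\,dx$ are finite; I identify $m$ and $B$ with their continuous-in-time representatives, so they are defined for every $t\in[0,1]$. The first step is to establish
$$
m(t)=\int_a^b\bar u\ \text{ for every }t\in[0,1],\qquad B'(t)=\int_{\R}u^2(t,x)\,dx\ \text{ for a.e.\ }t\in(0,1),
$$
with $B$ absolutely continuous on $[0,1]$ and $B(0)=\int_a^b x\,\bar u(x)\,dx$. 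Both identities come from inserting into the distributional formulation~\eqref{e:distrsol} of the Cauchy problem (with the flux $u^2$) test functions of product type $\varphi(t,x)=\theta(t)\psi(x)$, with $\theta\in C^\infty_c((0,1))$ — which makes the initial-datum term drop — and $\psi\in C^\infty_c(\R)$ chosen equal to the constant $1$ (for the mass identity) or to $x$ (for the moment identity) on a neighbourhood of $K$. In the first case $\psi'$ vanishes on $\operatorname{supp}u(t,\cdot)$, so the flux term disappears and $\int_0^1\theta'(t)\,m(t)\,dt=0$; in the second $\psi'\equiv1$ there, so $\int_0^1\theta'(t)\,B(t)\,dt+\int_0^1\theta(t)\big(\int_{\R}u^2(t,\cdot)\,dx\big)\,dt=0$. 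As $t\mapsto\int_{\R}u^2(t,\cdot)\,dx$ belongs to $L^1((0,1))$, these identities in $\mathcal D'((0,1))$ promote $m$ to a constant and $B$ to an absolutely continuous function with the stated a.e.\ derivative; the value $B(0)=\int_a^b x\,\bar u$ is recovered by letting $t\to0^+$, using that a distributional solution of the Cauchy problem attains its datum (e.g.\ in $C([0,1];\mathcal D')$) together with the uniform compact support and the $L^2$ bound.

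The two conclusions then follow quickly. Since $\int_{\R}u^2(t,\cdot)\,dx\ge0$, the identity $B'(t)=\int_{\R}u^2(t,\cdot)\,dx$ shows that $B$, hence the barycenter $B(t)/m(t)$, is nondecreasing — the first assertion. For~\eqref{e:nono}, integrate the same identity:
$$
\int_a^b x\,u(t,x)\,dx-\int_a^b x\,\bar u(x)\,dx=B(t)-B(0)=\int_0^t\int_{\R}u^2(s,x)\,dx\,ds,
$$
and bound the inner integral from below for a.e.\ $s$. The function $u(s,\cdot)\ge0$ is supported in $(a,b)$, an interval of length at most one, so Cauchy--Schwarz gives
$$
\Big(\int_a^b\bar u\Big)^2=\Big(\int_a^b u(s,x)\,dx\Big)^2\le\bigl|\operatorname{supp}u(s,\cdot)\bigr|\int_{\R}u^2(s,x)\,dx\le\int_{\R}u^2(s,x)\,dx.
$$
Plugging this into the previous display yields $\int_a^b x\,u(t,x)\,dx\ge\big(\int_a^b\bar u\big)^2\,t+\int_a^b x\,\bar u(x)\,dx$, which is exactly~\eqref{e:nono}.

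The delicate point is the rigorous derivation of $B'(t)=\int_{\R}u^2(t,\cdot)\,dx$ and of $B(0)=\int_a^b x\,\bar u$ at the weak regularity available: $u$ is only an $L^2_{\loc}$, and possibly non-entropy, distributional solution, so characteristics and entropy inequalities are of no use and every manipulation must stay inside the weak formulation~\eqref{e:distrsol}. Compact support of $u$ in $[0,1]\times(a,b)$ is precisely what makes this work — it removes all boundary contributions at spatial infinity, keeps $m$, $B$ and $\|u(t,\cdot)\|_{L^2}^2$ finite, and permits the affine choice $\psi(x)=x$ near $\operatorname{supp}u$ while keeping $\psi$ compactly supported. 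Once the moment identity is established, the monotonicity of the barycenter and the Cauchy--Schwarz estimate are immediate; the latter is the step that uses the confinement of $u$ (through the length of the interval $(a,b)$), and it is in this form that the lemma is applied in Counterexample~\ref{l:ce2}(2), where the relevant limit solution is supported in $[-1,0]$.
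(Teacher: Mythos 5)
Your argument is essentially the paper's own proof: conservation of mass from the weak formulation, the identity $\frac{d}{dt}\int x\,u(t,x)\,dx=\int u^2(t,x)\,dx$ obtained by testing with (an approximation of) $\psi(x)=x$, a Jensen/Cauchy--Schwarz lower bound in terms of $\big(\int_a^b u\big)^2$, and integration in time; your product-test-function rigorization is the same device the paper invokes by referring back to the proof of Counterexample~\ref{l:ce1}. The one point to watch is your assertion that $(a,b)$ has ``length at most one'': this is not among the hypotheses of the lemma, and without it Cauchy--Schwarz (or Jensen, as in the paper's display~\eqref{e:baricentro}) only yields the lower bound $\frac{1}{b-a}\big(\int_a^b\bar u\big)^2 t+\int_a^b x\,\bar u\,dx$. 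The paper's computation in fact produces exactly this factor $\tfrac{1}{b-a}$ and then states~\eqref{e:nono} without it, so your reading forced the tacit assumption $b-a\le 1$; in the only place the lemma is used (Counterexample~\ref{l:ce2}(2), where $a=-1-\sigma$, $b=\sigma$ and $t>1/2$ with $\sigma$ small) the $\tfrac{1}{b-a}$ version suffices, so the discrepancy is harmless, but it should be stated rather than silently assumed.
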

The proof of Counterexample~\ref{l:ce2}(2) straightforwardly follows from Lemma~\ref{l:ennio}. Indeed, 
any nonnegative distributional solution $u$ of the Cauchy problem~\eqref{e:cpburgers} starting from $\bar u$ in \eqref{e:u0ce3} cannot satisfy
\begin{equation}
\label{e:430new}
      u(t, x)=0 \; \; \text{for a.e. $t\in [0, 1] ,\;  x\in  (-\infty, -1) \cup (0, \infty)$},
\end{equation}
because otherwise it would contradict \eqref{e:nono} for $a=-1-\sigma,b=\sigma$ ($\sigma$ arbitrarily small) and any $t>1/2$. 
Hence we find a contradiction with \eqref{e:suppuee} as in the proof of Counterexample~\ref{l:ce2}(1). 
\begin{proof}[Proof of Lemma~\ref{l:ennio}]
The conservation law \eqref{e:cpburgers} implies
\begin{equation}
\label{e:conserviamo}
    \int_{a}^{b} u(t, x) dx =
    \int_\R u(t, x) dx 
    =
    \int_{a}^{b} \bar u(x) dx 
 \quad \text{for a.e. $t>0$}. 
\end{equation}
We perform some \emph{formal} computations, which can be made rigorous by arguing as in the proof of Counterexample~\ref{l:ce1}: by \eqref{e:cpburgers}, the Jensen inequality, and the previous equality, we have
\begin{equation}
\label{e:baricentro}
\begin{split}
\frac{d}{dt}
 \int_{a}^{b}& 
  x \, u(t, x) dx  =
 \int_{a}^{b} x \, \partial_t u (t, x) dx 
=
 - \int_{a}^{b} 
x \, \partial_x \big[ u^2 \big](t, x) dx
 \\ & 
 = \Big[ - x \, u^2 (t, x)\Big]^{x = a}_{x = b}
 + 
 \int_{a}^{b} u^2 (t, x)dx 
\ge 
  \frac{1
 }{b-a}
  \left( \int_{a}^{b}  u (t, x)dx \right)^2.
\end{split}
\end{equation} 
Integrating in time, we get \eqref{e:nono}.
\end{proof}

\subsection{A counterexample with positive data and symmetric kernels}
\label{ss:ce3}
We now establish the following result. 
\begin{counterexample}
\label{l:ce3}
Assume that $\eta$ and $\eta_\ee$ are as in~\eqref{e:v} and~\eqref{e:etaee}, respectively, and that $\eta$ is an even function. Let $u$ denote the entropy admissible solution of~\eqref{e:cpburgers},~\eqref{e:u0ce3} and $u_\ee$ the solution of~\eqref{e:cpce},~\eqref{e:u0ce3}. 
Then for every $\delta>0$, the family $u_\ee$ does not converge to $u$ strongly in $L^{1+\delta} $, not even up to subsequences. More precisely, 
\begin{equation}
\label{e:ce3}
        \forall \, t  >0, \, \nexists \; \text{$\{ \ee_k \}$,} \; \ee_k \to 0^+  \; \text{such that} \; u_{\ee_k} (t, \cdot) \to u(t, \cdot) \; 
        \text{strongly in $L^{1 + \delta}$.}         
\end{equation}
\end{counterexample}
Note that~\eqref{e:ce3} rules out the possibility that $u_\ee$ converges to $u$ in $L^{1+ \delta} ([0, 1] \times \R)$: indeed, if this were true then, up to subsequences, $u_\ee(t, \cdot) \to u (t, \cdot)$ in $L^{1+ \delta} $ for a.e. $t$, and this is ruled out by~\eqref{e:ce3}. 

The basic idea underpinning Counterexample~\ref{l:ce3} is the following. We introduce the entropy function 
$$
    \mathscr E (u): = \int_\R u \ln u \ dx, 
$$
where by a slight abuse of notation we have continuously extended the function $u \ln u$ with value $0$ for $u=0$.
By using the formal computation~\eqref{e:savare}, one gets that 
$$
     \frac{d}{dt} \mathscr E(u_\ee) = 0
$$
if $u_\ee$ is a nonnegative solution of~\eqref{e:cpce}. On the other hand, the function $u \ln u$ is convex and hence $\mathscr E(u)$ is non increasing for nonnegative entropy admissible solutions of~\eqref{e:cpburgers}. In particular, if the initial datum is as in~\eqref{e:u0ce3}, then $\mathscr E(u)$ is strictly decreasing. After some more work this allows us to rule out the strong convergence of $u_\ee$ to $u$. 
 
The precise argument requires some preliminary results.
\begin{lemma}
\label{l:continuity}
Fix $\delta >0$ and assume that  $\{ v_k \} \subseteq L^{1+ \delta} $ satisfies 
   $v_k \to v$ in $L^{1+\delta} $, for some  compactly supported $v \in L^{1+\delta}$. Then 
\begin{equation}
\label{e:tesilemma-new}
   \int_\R v \ln v  \, dx \ge     \limsup_{k \to + \infty} 
    \int_\R v_k \ln v_k  \, dx. 
\end{equation}

   \end{lemma}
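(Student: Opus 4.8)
The plan is to decompose the continuous convex function $s \mapsto s\ln s$ on $[0, +\infty[$ (tacitly, $v_k$ and $v$ are nonnegative, as the statement presupposes, and $v \ge 0$ a.e.\ anyway follows from $v_k \ge 0$ and $L^{1+\delta}$ convergence) into a superlinear part at infinity and a bounded part near the origin, and to handle the two by different limiting theorems. Concretely, I would write $s\ln s = \psi^+(s) - \psi^-(s)$ with $\psi^+(s) := \max\{s\ln s, 0\}$ and $\psi^-(s) := \max\{-s\ln s, 0\}$, both continuous and nonnegative on $[0, +\infty[$, and record the elementary bounds $\psi^+(s) \le (e\delta)^{-1} s^{1+\delta}$ for every $s \ge 0$ (since $\psi^+$ vanishes on $[0,1]$ while $\ln s \le (e\delta)^{-1} s^{\delta}$ for $s \ge 1$) and $0 \le \psi^-(s) \le e^{-1} \mathbf{1}_{\{s > 0\}}$ (since $\psi^-$ vanishes for $s \ge 1$ while $-s\ln s \le e^{-1}$ on $]0,1[$). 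This first step also pins down the role of the hypothesis that $v$ is compactly supported: combined with $v \in L^{1+\delta}$, the two bounds give $\psi^+(v), \psi^-(v) \in L^1(\R)$, so that $\int_\R v\ln v\, dx = \int_\R \psi^+(v)\, dx - \int_\R \psi^-(v)\, dx$ is a well-defined finite number; applying the bound on $\psi^+$ to $v_k$ also gives $\int_\R v_k\ln v_k\, dx \le (e\delta)^{-1}\|v_k\|_{L^{1+\delta}}^{1+\delta}$, and since $\|v_k\|_{L^{1+\delta}} \to \|v\|_{L^{1+\delta}}$ the right-hand side of \eqref{e:tesilemma-new} is a priori finite.

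The heart of the argument is a subsequence extraction, made necessary by the fact that $L^{1+\delta}$ convergence gives pointwise convergence only along a subsequence. I would first reduce the claim to showing that every subsequence of $\{v_k\}$ has a further subsequence $\{v_{k_j}\}$ along which $\limsup_j \int_\R v_{k_j}\ln v_{k_j}\, dx \le \int_\R v\ln v\, dx$; this implies \eqref{e:tesilemma-new} because the $\limsup$ there is finite. Given such a subsequence, which still converges to $v$ in $L^{1+\delta}$, I would invoke the standard construction underlying the completeness of $L^{1+\delta}$ to pass to a further subsequence $v_{k_j} \to v$ a.e.\ which is dominated by a single $h \in L^{1+\delta}(\R)$. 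Then $\psi^{\pm}(v_{k_j}) \to \psi^{\pm}(v)$ a.e.\ by continuity; for $\psi^+$ the domination $\psi^+(v_{k_j}) \le (e\delta)^{-1} h^{1+\delta} \in L^1(\R)$ lets me pass to the limit by Dominated Convergence, obtaining $\int_\R \psi^+(v_{k_j})\, dx \to \int_\R \psi^+(v)\, dx$; for $\psi^-$ I only have nonnegativity, so Fatou's Lemma gives $\liminf_j \int_\R \psi^-(v_{k_j})\, dx \ge \int_\R \psi^-(v)\, dx$. Subtracting the two yields $\limsup_j \int_\R v_{k_j}\ln v_{k_j}\, dx \le \int_\R \psi^+(v)\, dx - \int_\R \psi^-(v)\, dx = \int_\R v\ln v\, dx$, which is exactly the required subsequential statement.

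I do not expect a serious obstacle. The one point that must be handled correctly is the built-in asymmetry: the piece of $s\ln s$ that grows (at infinity) has to be controlled via Dominated Convergence using the uniform $L^{1+\delta}$ bound, whereas the piece that dips below zero (near the origin, i.e.\ $\psi^-$) can only be controlled by Fatou, and the two one-sided limits combine in the correct direction to give an upper-semicontinuity statement rather than continuity. The compact support of $v$ enters only to make $\int_\R\psi^-(v)\, dx$ — hence $\int_\R v\ln v\, dx$ — finite; without it the left-hand side of \eqref{e:tesilemma-new} could equal $-\infty$.
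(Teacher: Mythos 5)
Your proof is correct, and it reaches the conclusion by a genuinely different decomposition than the one in the paper. You split the \emph{integrand}: $s\ln s=\psi^+(s)-\psi^-(s)$, with $\psi^+(s)\le (e\delta)^{-1}s^{1+\delta}$ handled by dominated convergence (using the dominating function $h\in L^{1+\delta}$ from the a.e.-convergent subsequence) and $\psi^-\ge 0$ handled by Fatou; the two one-sided limits combine with the right signs to give upper semicontinuity. The paper instead splits the \emph{domain}: on the compact support $\Omega$ of $v$ it applies dominated convergence to the whole function $v_k\ln v_k$, dominated by $C(\delta)(1+|h|^{1+\delta})$, which is integrable there precisely because $|\Omega|<\infty$; on $\R\setminus\Omega$ it discards the negative part of $v_k\ln v_k$ by its sign and kills the positive part using $\|v_k\|_{L^{1+\delta}(\R\setminus\Omega)}\to 0$. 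The two arguments use the same two ingredients (the subsequence extraction with a single dominating function, and the growth bound $s\ln s\lesssim_\delta s^{1+\delta}$ for $s\ge1$), but your version is more modular and makes the asymmetry of the statement transparent: the superlinear part is uniformly integrable and passes to the limit, while the bounded negative part is only lower semicontinuous, and the compact support of $v$ enters solely to guarantee $\int_\R\psi^-(v)\,dx<\infty$, i.e.\ that the left-hand side of \eqref{e:tesilemma-new} is not $-\infty$. You are also more careful than the paper on one bookkeeping point, namely the reduction to subsequences of subsequences needed because $L^{1+\delta}$ convergence yields pointwise convergence only along a subsequence (the paper handles this slightly more tersely by first passing to a subsequence realizing the $\limsup$ as a limit). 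No gap.
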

\begin{proof}
Let $\Omega \subset \R$ be a compact set s.t. $v=0$ a.e. in $\R \setminus \Omega$. Up to a (not relabelled) subsequence, we can assume that the $\limsup$ in the right-hand side in \eqref{e:tesilemma-new} is a limit.  Since $v_k \to v$ in $L^{1+\delta} $, up to a further subsequence, we can assume that $v_k$ converges pointwise to $v$ a.e. in $\R$ and that there exists a function $h \in L^{1+\delta}(\R)$ such that $h \geq |v_k|$ a.e. for any $k \in \mathbb N$. We observe that
\begin{equation}
\label{e:boundsuvk}
       | w \ln w  | \leq C(\delta) \big(\mathbf{1}_{\{w< 1\}} + \mathbf{1}_{\{w\geq 1\}}  |w|^{1+\delta}) , 
       \quad \text{for every $w\ge 0$}.
\end{equation}
Since the function $s \to s\ln s$ is negative for $s<1$, by \eqref{e:boundsuvk}, and since $v_k \to v$ in $L^{1+\delta} (\R\setminus \Omega)$ we have
$$\limsup_{k\to \infty} \int_{\R \setminus \Omega} v_k\ln v_k \, dx \leq \limsup_{k\to \infty} \int_{\R \setminus \Omega} v_k\ln v_k \mathbf{1}_{v_k\geq 1} \, dx \leq C(\delta) \lim_{k\to \infty} \int_{\R \setminus \Omega} |v_k|^{1+\delta}\, dx =0 .
$$
Since the functions $v_k \ln v_k$ converge pointwise to $v \ln v$ as $k\to \infty$ and the convergence is dominated by $C(\delta) (1+ |h|^{1+\delta}) \in L^1(\Omega)$, we have
$$\int_\Omega v\ln v \, dx= \lim_{k\to \infty} \int_\Omega v_k\ln v_k \, dx \geq  \lim_{k\to \infty} \int_\Omega v_k\ln v_k \, dx + \limsup_{k\to \infty} \int_{\R \setminus\Omega} v_k\ln v_k \, dx =  \limsup_{k\to \infty} \int_{\R } v_k\ln v_k \, dx,
$$
which proves \eqref{e:tesilemma-new}.
\end{proof}
\begin{lemma}
\label{l:entropyconservation}
Let $u_\ee$ be the solution of~\eqref{e:cpburgers} and assume that $\eta_\ee$ satisfies~\eqref{e:v} and~\eqref{e:etaee} and that $\eta$ is an even function. Let $\bar u \in L^\infty$ be a compactly supported function satisfying $\bar u \ge 0$ and  
\begin{equation}
\label{e:ipo1}
    \int_\R \bar u \ln \bar u \ dx  < + \infty .
\end{equation}
Then for every $t\ge 0$ the distributional solution satisfies the following properties: $u_\ee(t, \cdot) \ge0$ and 
\begin{equation}
\label{e:entropyconservation}
    \int_\R u_\ee \ln u_\ee (t, \cdot) \ dx = \int_\R \bar u \ln \bar u \ dx.  
\end{equation}
\end{lemma}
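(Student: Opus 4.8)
The plan is to reduce \eqref{e:entropyconservation} to the \emph{formal} identity $\frac{d}{dt}\int_\R u_\ee\ln u_\ee\,dx=0$ and then to justify it rigorously, the only difficulties being that $u_\ee$ is merely a bounded distributional solution and that $s\mapsto s\ln s$ fails to be $C^1$ at the origin. First I would record two elementary consequences of the transport structure. Viewing $u_\ee$, exactly as in Counterexamples~\ref{l:ce1} and~\ref{l:ce2}, as the bounded distributional (hence, since $g_\ee:=u_\ee\ast\eta_\ee$ is smooth in space, \emph{renormalized}) solution of the continuity equation~\eqref{e:kruzkov}, and using the associated flow~\eqref{e:caratteristiche}: since $\bar u\ge0$ and the value is propagated along characteristics, $u_\ee(t,\cdot)\ge0$ for every $t$; and since $\|g_\ee(t,\cdot)\|_{L^\infty}\le\|u_\ee(t,\cdot)\|_{L^\infty}\|\eta_\ee\|_{L^1}=\|u_\ee(t,\cdot)\|_{L^\infty}$ is bounded on $[0,1]$, the characteristic speed is bounded, so $u_\ee(t,\cdot)$ stays supported in a fixed compact set $\Omega$ and satisfies $0\le u_\ee\le M$ there. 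In particular $u_\ee\ln u_\ee$ is bounded on $\Omega$, and $t\mapsto\int_\R u_\ee\ln u_\ee\,dx$ is finite and (by dominated convergence, using $u_\ee\in C^0([0,1];L^1)$ and the uniform bound) continuous on $[0,1]$.

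Next I would use the renormalization property already invoked in the proof of Proposition~\ref{p:exunice}: for every $\beta\in C^1(\R)$ the identity~\eqref{e:renormalized} holds in the sense of distributions,
\begin{equation*}
   \partial_t\big[\beta(u_\ee)\big]+\partial_x\big[\beta(u_\ee)\,g_\ee\big]+\partial_x g_\ee\,\big[\beta'(u_\ee)u_\ee-\beta(u_\ee)\big]=0 .
\end{equation*}
Since $s\ln s$ is not admissible, I would apply this to the regularizations $\beta_k(s):=s\ln(s+1/k)$, for which $\beta_k'(s)s-\beta_k(s)=s^2/(s+1/k)$, test against $\theta(t)\psi(x)$ with $\theta\in C^\infty_c(]0,1[)$ and $\psi\in C^\infty_c(\R)$ equal to $1$ on a neighbourhood of $\Omega$ (so the spatial divergence term vanishes), and then let $k\to\infty$. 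On $\Omega$ one has $\beta_k(u_\ee)\to u_\ee\ln u_\ee$ and $s^2/(s+1/k)\to s$, both dominated (respectively by $\max\{1/e,M\ln(M+1)\}$ and by $M$), while $\partial_x g_\ee=u_\ee\ast\eta_\ee'$ is bounded; this shows that $t\mapsto\int_\R u_\ee\ln u_\ee\,(t,\cdot)\,dx$ has distributional derivative equal to $-\int_\R(u_\ee\ast\eta_\ee')\,u_\ee\,(t,\cdot)\,dx$ on $]0,1[$.

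The final step is to see that this production term vanishes. By Fubini (all integrands bounded and compactly supported), $\int_\R(u_\ee\ast\eta_\ee')\,u_\ee\,dx=\int_\R\eta_\ee'(y)\,\phi(y)\,dy$, where $\phi(y):=\int_\R u_\ee(t,x)u_\ee(t,x-y)\,dx$; the autocorrelation $\phi$ is an \emph{even} function of $y$, whereas $\eta_\ee'$ is \emph{odd} because $\eta_\ee$ is even, so the integral is zero. Hence $t\mapsto\int_\R u_\ee\ln u_\ee\,(t,\cdot)\,dx$ is constant on $[0,1]$, and by continuity it equals its value at $t=0$, namely $\int_\R\bar u\ln\bar u\,dx$, which is finite by~\eqref{e:ipo1}; this is~\eqref{e:entropyconservation}, and the same argument works on any $[0,T]$.

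I expect the main obstacle to be exactly the rigorous justification of this entropy balance at the available (low) regularity: combining the DiPerna--Lions--Ambrosio renormalization property of the continuity equation --- licit here only because $g_\ee$ is spatially smooth --- with the approximation $\beta_k$ of the singular entropy $s\ln s$ and passing to the limit on the fixed compact support. By contrast, the algebraic heart of the argument, the vanishing of the quadratic form $\int(u_\ee\ast\eta_\ee')\,u_\ee$ by the odd/even parity, is elementary.
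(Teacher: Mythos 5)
Your proposal is correct and follows essentially the same route as the paper: renormalization of the continuity equation with the smooth velocity $g_\ee=u_\ee\ast\eta_\ee$, a $C^1$ regularization $\beta_k$ of $s\ln s$ with $s\beta_k'(s)-\beta_k(s)\to s$, and the cancellation of the production term $\int_\R u_\ee\,(u_\ee\ast\eta_\ee')\,dx$ by parity. The paper phrases that cancellation via the adjoint identity $\int a(b\ast c)=\int(a\ast\check c)b$ with $\check\eta_\ee'=-\eta_\ee'$ rather than via the evenness of the autocorrelation, but this is the same algebraic fact.
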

\begin{proof}
First, we point out that one can check by direct computation that, for every 
$a, b \in L^2 (\R)$,  $c \in C^\infty_c (\R)$, we have 
\begin{equation}
\label{e:abc}
    \int_\R a (b \ast c) dx = \int_\R (a \ast \check c) b \, dx,   
\end{equation}
where $\check c(x)= c(-x).$ We apply the above formula with $a= b=u_\ee (t, \cdot)$ and  $c= \eta'_\ee$. Since $\eta_\ee$ is an even function, then the derivative $\eta_\ee'$ is an odd function and hence $\check \eta'_\ee = - \eta_\ee'$. We then obtain 
$$
    \int_\R u_\ee \partial_x \big[ u_\ee \ast \eta_\ee \big] (t, \cdot)dx = 
    \int_\R u_\ee  ( u_\ee \ast \eta'_\ee ) (t, \cdot)dx \stackrel{\eqref{e:abc}}{=}
    - \int_\R (u_\ee \ast \eta_\ee') u_\ee (t, \cdot) dx=
    - \int_\R u_\ee \partial_x \big[ u_\ee \ast \eta_\ee \big] (t, \cdot)dx ,   
$$
which implies that 
\begin{equation}
\label{e:abc2}
     \int_\R u_\ee \partial_x \big[ u_\ee \ast \eta_\ee \big] (t, \cdot)dx  = 0. 
\end{equation}
We can then establish~\eqref{e:entropyconservation} by using the
following 
(formal) computation: 
\begin{equation}
\label{e:savare}
\begin{split}
  \frac{d}{dt} & \int_\R u_\ee \ln u_\ee (t, \cdot) dx
  =  \int_\R
   (1 + \ln u_\ee) \partial_t u_\ee (t, \cdot) dx \stackrel{\eqref{e:cpburgers}}{=}
    - \int_\R (1 + \ln u_\ee) \partial_x \big[ u_\ee (u_\ee \ast \eta_\ee) \big] (t, \cdot) 
    dx \\
    & =    
   \int_\R   \partial_x u_\ee \, \frac{1}{u_\ee} \,  u_\ee (u_\ee \ast \eta_\ee) (t, \cdot) dx 
    = 
    \int_\R  \partial_x u_\ee (u_\ee \ast \eta_\ee) (t, \cdot) dx
    = - \int_\R u_\ee \partial_x 
    \big[  u_\ee \ast \eta_\ee\big] (t, \cdot) dx
    \stackrel{\eqref{e:abc2}}{=}0.
\end{split}
\end{equation}
To make the above argument rigorous, we recall that $u_\ee$ is the solution of the Cauchy problem~\eqref{e:kruzkov}, where the velocity field $g_\ee = u_\ee \ast \eta_\ee$ is smooth. By the renormalization property, for every $\beta \in C^1$ we have~\eqref{e:renormalized}. This implies that 
\begin{equation}
\label{e:rino2}
    \int_\R \beta (u_\ee (t, \cdot)) dx  = 
    \int_\R \beta (\bar u) dx
    - \int_0^t \int_\R \partial_x
    \big[ u_\ee \ast \eta_\ee \big] 
    \Big[ 
    u_\ee \beta'(u_\ee) - \beta (u_\ee) \Big] dx ds. 
\end{equation}
 We construct a sequence of functions $\beta_n: \R^+ \to \R$ by setting 
 $$
     \beta_n (v): = 
     \int_0^v \left[ 1 + \ln \left( \xi + 
     \frac{1}{n} 
     \right) \right] d \xi.  
 $$
 Note that 
 $$
     \beta_n (v) \to 
     \int_0^v \left[ 1 + \ln \xi \right] d \xi = 
     v \ln v, \quad 
     \text{for every $v \ge 0$, as $n \to + \infty$},
 $$
 $$
     v \beta'_n (v) - \beta_n (v) \to 
     v, \quad 
      \text{for every $v \ge 0$, as $n \to + \infty$}. 
 $$
 By testing the inequality~\eqref{e:rino2} with $\beta_n$ and passing to the limit for $n \to + \infty$ we obtain 
 \begin{equation}
\label{e:rino3}
    \int_\R u_\ee \ln u_\ee \ dx  = 
    \int_\R \bar u \ln \bar u \ dx
    - \int_0^t \int_\R \partial_x
    \big[ u_\ee \ast \eta_\ee \big] 
    u_\ee \ ds dx
    \stackrel{\eqref{e:abc2}}{=}
    \int_\R \bar u \ln \bar u \ dx. 
\end{equation}
This concludes the proof of the lemma. 
\end{proof}
\begin{proof}[Proof of Counterexample~\ref{l:ce3}]
If $\bar u$ is given by~\eqref{e:u0ce3}, then the entropy admissible solution $u$ can be explicitly computed and is given by~\eqref{e:limitece3}. 
Note that $\bar u$ only attains the values $0$
 and $1$, whereas  if $t \in (0, 1]$ then $u$ attains values between $0$ and $1$. Therefore 
\begin{equation}
\label{e:unozero}
  \int_\R u(t,x) \ln u  (t, x) \ dx< 0 =    \int_\R \bar u \ln \bar u \ dx  \qquad\mbox{for any }t \in (0, 1]. 
\end{equation} 
Owing to Lemma~\ref{l:entropyconservation}, for every $\ee >0$ and $t \ge 0$ we have 
\begin{equation}
\label{e:sizero}
     \int_\R u_\ee(t,x) \ln u_\ee (t,x) \ dx = 0. 
\end{equation}
Assume by contradiction  that there is a sequence $\ee_k \to 0^+$ and a time $t >0$ such that $u_{\ee_k} (t, \cdot) \to u(t, \cdot)$ strongly in $L^{1 +\delta} (\R).$  We apply 
Lemma~\ref{l:continuity} with $v_k : = u_{\ee_k} (t, \cdot)$, $v: = u(t, \cdot)$. By combining~\eqref{e:sizero} and~\eqref{e:tesilemma-new} we get 
$$
    \int_\R u (t, x) \ln u  (t, x) \ dx \ge 0,
$$
 which contradicts the second inequality in~\eqref{e:unozero}. This concludes the proof of the lemma. 
\end{proof}

\section{Acknowledgments}
The authors wish to thank Stefano Bianchini, Rinaldo Colombo, Elio Marconi, Mario Pulvirenti, Giuseppe Savar\'e and Giuseppe Toscani for valuable discussions. In particular, Lemma~\ref{l:ennio} is due to Elio Marconi and the equality~\eqref{e:savare} was pointed out to us by Giuseppe Savar\'e. GC is partially supported by the Swiss National Science Foundation
grant 200020\_156112 and by the ERC Starting Grant 676675 FLIRT. LVS is a member of the GNAMPA group of INDAM and of the PRIN National Project ``Hyperbolic Systems of Conservation Laws and Fluid Dynamics:
Analysis and Applications". Part of this work was done when LVS and MC were visiting the University of Basel, and its  kind hospitality is gratefully acknowledged.  
\bibliographystyle{plain}
\bibliography{singpar}
\end{document}